\theoremstyle{definition}
\newtheorem{theorem}{Theorem}[chapter]
\newtheorem{definition}[theorem]{Definition}
\newtheorem{proposition}[theorem]{Proposition}
\newtheorem{lemma}[theorem]{Lemma}
\newtheorem{corollary}{Corollary}[theorem]
\newcommand{\Z}{\ensuremath{\mathbb{Z}}}
\newcommand{\R}{\ensuremath{\mathbb{R}}}
\newcommand{\C}{\ensuremath{\mathbb{C}}}
\newcommand{\laa}{\mathfrak{a}}
\newcommand{\lab}{\mathfrak{b}}
\newcommand{\lag}{\mathfrak{g}}
\newcommand{\lah}{\mathfrak{h}}
\newcommand{\lak}{\mathfrak{k}}
\newcommand{\lan}{\mathfrak{n}}
\newcommand{\lap}{\mathfrak{p}}
\newcommand{\gl}{\mathfrak{gl}}
\newcommand\restrict[1]{\raisebox{-.38ex}{$\big\lvert$}_{#1}}
\DeclareMathOperator{\en}{End}
\DeclareMathOperator{\tr}{tr}
\newcommand{\hs}{_{\textbf{hs}}}
\DeclareMathOperator{\gr}{gr}
\newcommand{\coo}{c^{\Xi_0}_{\Xi_0}}
\newcommand{\cto}{c^{\Xi_2}_{\Xi_0}}
\renewcommand{\cot}{c^{\Xi_0}_{\Xi_2}}
\newcommand{\ctt}{c^{\Xi_2}_{\Xi_2}}
\newcommand{\doo}{c^{\Omega_0}_{\Omega_0}}
\newcommand{\dfo}{c^{\Omega_4}_{\Omega_0}}
\newcommand{\dof}{c^{\Omega_0}_{\Omega_4}}
\newcommand{\dff}{c^{\Omega_4}_{\Omega_4}}
\newcommand{\THFO}{e^{-\Omega_0}(\Theta_{\Omega_0})_{\hs}}
\newcommand{\THFF}{e^{-\Omega_0}(\Theta_{\Omega_4})_{\hs}}
\newcommand{\THGO}{e^{-\Xi_0}(\Theta_{\Xi_0})_{\hs}}
\newcommand{\THGT}{e^{-\Xi_0}(\Theta_{\Xi_2})_{\hs}}
\newcommand{\dcart}[1]{\mathfrak{h}^*_{#1}}
\DeclareMathOperator{\odd}{Odd}
\DeclareMathOperator{\even}{Even}
\DeclareMathOperator{\ch}{ch}
\DeclareMathOperator{\im}{Im}
\DeclareMathOperator{\Span}{Span}
\DeclareMathOperator{\ad}{ad}
\DeclareMathOperator{\rk}{rk}
\DeclareMathOperator{\sgn}{sgn}
\DeclareMathOperator{\mult}{mult}
\DeclareMathOperator{\vir}{Vir}
\renewcommand*\@makechapterhead[1]{%
	\vspace*{0.75in}%
	{\parindent \z@ \raggedright \normalfont
		\ifnum \c@secnumdepth >\m@ne
		\huge\bfseries \@chapapp\space \thechapter
		\par\nobreak
		\vskip 20\p@
		\fi
		\interlinepenalty\@M
		\Huge \bfseries #1\par\nobreak
		\vskip 40\p@
}}
\renewcommand*\@makeschapterhead[1]{%
	\vspace*{0.75in}%
	{\parindent \z@ \raggedright
		\normalfont
		\interlinepenalty\@M
		\Huge \bfseries  #1\par\nobreak
		\vskip 40\p@
}}
\begin{document}

\frontmatter

\thispagestyle{empty}

\vspace*{1in}

\centerline{BRANCHING RULE DECOMPOSITION OF THE LEVEL-1 $E_8^{(1)}$-MODULE} 
\centerline{WITH RESPECT TO THE IRREGULAR SUBALGEBRA $F_4^{(1)}\oplus G_2^{(1)}$}

\vfill

\centerline{BY}

\vskip 12pt

\centerline{JOSHUA D. CAREY}

\vskip 12pt

\centerline{BS, Marywood University, 2015}
\centerline{MA, Binghamton University State University of New York, 2017}

\vfill

\centerline{DISSERTATION}

\vskip 12pt

\centerline{Submitted in partial fulfillment of the requirements for}
\centerline{the degree of Doctor of Philosophy in Mathematical Sciences}
\centerline{in the Graduate School of}
\centerline{Binghamton University}
\centerline{State University of New York}
\centerline{2022}

\newpage

\thispagestyle{empty}

\vspace*{0in}
\vfill

\centerline{\copyright\ Copyright by Joshua Drew Carey 2022}

\vskip 12pt

\centerline{All Rights Reserved}

\newpage

\vspace*{0in}
\vfill

\centerline{Accepted in partial fulfillment of the requirements for}
\centerline{the degree of Doctor of Philosophy in Mathematical Sciences}
\centerline{in the Graduate School of}
\centerline{Binghamton University}
\centerline{State University of New York}
\centerline{2022}
    \vskip 12pt
\centerline{May 15, 2022}
    \vskip 12pt
\centerline{Alex J. Feingold, Chair and Faculty Advisor}
\centerline{Department of Mathematics and Statistics, Binghamton University}
    \vskip 12pt
\centerline{Fernando Guzman, Member}
\centerline{Department of Mathematics and Statistics, Binghamton University}
    \vskip 12pt
\centerline{Hung Tong-Viet, Member}
\centerline{Department of Mathematics and Statistics, Binghamton University}
    \vskip 12pt
\centerline{Leslie Lander, Outside Examiner}
\centerline{Department of Computer Science, Binghamton University}

\newpage

\chapter*{Abstract}
\begin{doublespacing}
    
Given a Lie algebra of type $E_8$, one can use Dynkin diagram automorphisms of the $E_6$ and $D_4$ Dynkin diagrams to locate a subalgebra of type $F_4\oplus G_2$. These automorphisms can be lifted to the affine Kac-Moody counterparts of these algebras and give a subalgebra of type $F_4^{(1)}\oplus G_2^{(1)}$ within a type $E_8^{(1)}$ Kac-Moody Lie algebra. We will consider the level-1 irreducible $E_8^{(1)}$-module $V^{\Lambda_0}$ and investigate its branching rule, that is how it decomposes as a direct sum of irreducible $F_4^{(1)}\oplus G_2^{(1)}$-modules.

We calculate these branching rules using a character formula of Kac-Peterson which uses theta functions and the so-called ``string functions." We will make use of Jacobi's, Ramanujan's and the Borweins' theta functions (and their respective properties and identities) in our calculation, including some identities involving the Rogers-Ramanujan series. Virasoro character theory is used to verify string functions stated by Kac and Peterson. We also investigate dissections of some interesting $\eta$-quotients.
    
\end{doublespacing}

\newpage

\chapter*{Dedication}
\bigskip
\begin{doublespacing}
\centering
Dedicated to my wife Andrea and our children and my parents Frank and Linda, without whose love and support I would not be here.
\vskip 12pt
Dedicated also in memory of Michael Romanowski who first inspired my love of mathematics.

\end{doublespacing}

\newpage

\chapter*{Acknowledgements}
\begin{doublespacing}
I would first like to thank Bernard Julia for posing a question that led to this project. I would also like to thank Denis Bernard and Jean Thierry-Mieg for helpful comments regarding their paper. I am forever grateful to my advisor, Alex Feingold, for his knowledge, guidance and patience during my years as his student. I am especially grateful for all of the time he has dedicated to helping and encouraging me with this project and to teaching me about Kac-Moody algebras. I am grateful as well to my committee members, Fernando Guzman, Hung Tong-Viet and Leslie Lander for so generously giving of their time and expertise. Finally, I would like to thank all of the professors and fellow graduate students at Binghamton with whom I have had the pleasure of working and learning.

\end{doublespacing}

\newpage

\tableofcontents

\mainmatter


\begin{doublespacing}

\addcontentsline{toc}{chapter}{Introduction}
\chapter*{Introduction}
In this dissertation, we will solve a problem in the representation theory of infinite dimensional affine Kac-Moody Lie algebras. Given an irreducible module, $V^\Lambda$, of an affine algebra, $\widehat{\lag}$, and a subalgebra, $\widehat{\lap}$, branching rules give the decomposition of $V^\Lambda$ as a direct sum of irreducible $\widehat{\lap}$-modules. This sum is often infinite, but not always. When such a decomposition has only a finite number of summands, $\widehat{\lag}$ and $\widehat{\lap}$ are called a conformal pair and we say that $\widehat{\lap}$ is conformal in $\widehat{\lag}$. In the case of a regular conformal pair, i.e. $\rk(\lag) = \rk(\lap)$, graded dimensions of all modules involved can be calculated quickly using Lepowsky's principal specialization formula \cite{Lepowsky}. When the pair is irregular, i.e. $\rk(\lag)\neq \rk(\lap)$, another approach must be taken.

Our main result is a complete proof of the branching decomposition of the level-1 irreducible $E_8^{(1)}$-module 
$$V^{\Lambda_0} = (V^{\Omega_0}\otimes V^{\Xi_0})\oplus(V^{\Omega_4}\otimes V^{\Xi_2})$$
as a direct sum of just two tensor products. The summands are a level-1 $F_4^{(1)}$-module tensored with a level-1 $G_2^{(1)}$-module. 
This result is stated in several places (\cite{KacSan}, \cite{KacWak}, \cite{BTM}) without proof. 
We prove it here by comparing the graded dimensions of the left- and right-hand sides of the previous equation. 
We use a character formula of Kac-Peterson \cite{KacPeterson} to calculate graded dimensions of the level-1 $E_8^{(1)}$-module $V^{\Lambda_0}$, 
the level-1 $F_4^{(1)}$-modules $V^{\Omega_0}$ and $V^{\Omega_4}$ and the level-1 $G_2^{(1)}$ modules $V^{\Xi_0}$ and $V^{\Xi_2}$. 
This formula makes use of theta functions and ``string functions." Utilizing the horizontal specialization allows us to use the theory of theta series of lattices 
to calculate our theta functions. Kac and Peterson state the string functions required, but without proof in the case of all modules but $V^{\Lambda_0}$. 
Bernard and Thierry-Mieg are able to verify the $G_2^{(1)}$ string functions \cite{BTM} and we verify those for $F_4^{(1)}$ using the related language of 
branching functions.

Our calculation will make heavy use of theta functions and theta function identities. Most notably, we make use of 2 of the 40 identities for the Rogers-Ramanujan series from \cite{berndt} to 
simplify products of string functions into sums of  $\eta$-quotients with fractional multiples of $\tau$. These sums, however, contain no fractional powers of 
$q$ due to hidden cancellations that can be revealed by dissections of the individual quotients.

It should be noted that when investigating a branching rule where $\widehat{\lag}$ and $\widehat{\lap}$ are not a conformal pair, the ``coset Virasoro" 
Goddard-Kent-Olive construction \cite{GKO} plays a vital role because Virasoro modules count the multiplicities of $\widehat{\lap}$ summands. 
In the case of a conformal pair, the Virasoro modules are not helpful because the central charge of the coset Viraroro is $0$.  
Even though we could not make use of the coset Virasoro theory in our main calculation, it was necessary to verify the $F_4^{(1)}$ string functions. 
This verification will utilize 4 more of the 40 identities from \cite{berndt}.

\chapter{Background}
In this first chapter, we give an introduction to information on finite dimensional Lie algebras and infinite dimensional affine Kac-Moody Lie algebras. We conclude by constructing projections which identify the subalgebra with which we are concerned.


\section{Finite Dimensional Lie Algebras}
In this expository section we present the necessary background information on finite dimensional Lie algebras. This theory is classical and what is presented here can be found in any introductory text on Lie algebras such as \cite{humphreys} or \cite{carter}.
\begin{definition}
A \textbf{Lie algebra} is a vector space, $\lag$, over a field, $F$, together with a bilinear operation $[-,-]\colon\lag\times\lag\rightarrow\lag$ called the \textbf{Lie bracket} such that
\begin{enumerate}[(L1)]
    \item $[x,x] = 0$ for all $x\in\lag$,
    \item $[x,[y,z]]+[y,[z,x]]+[z,[x,y]] = 0$ for all $x,y,z\in\lag.$
\end{enumerate}
\end{definition}
Condition (L2) is referred to as the \textbf{Jacobi Identity}. Note that (L1) and bilinearity together imply that the Lie bracket is anticommutative, i.e. $[x,y] = -[y,x]$. Anticommuntativity and bilinearity will only imply (L1) in the case that we are working over a field of characteristic other than 2. Most of what follows in this section will hold for Lie algebras over any field of characteristic 0, but we will only consider complex Lie algebras.
\begin{definition}
For $\laa,\lab$ subsets of a Lie algebra $\lag$, define $[\laa,\lab]\coloneqq\Span\{[a,b]\mid a\in\laa, b\in\lab\}$. Then any subspace $\lah$ of $\lag$ such that $[\lah,\lah]\subseteq\lah$ is called a \textbf{Lie subalgebra} of $\lag$. Further, if $[\lag,\lah]\subseteq\lah$, then $\lah$ is called an \textbf{ideal} of $\lag$. If $\lah\subsetneq\lag$ is an ideal, we call it a \textbf{proper ideal}.
\end{definition}
Every Lie algebra has two ideals, namely the trivial ideal $\{0\}$ and the entire Lie algebra itself.
\begin{definition}
A Lie algebra, $\lag$, is called \textbf{simple} when it has no nontrivial, proper ideals and $[\lag,\lag] = \lag$. A Lie algbera is called \textbf{semisimple} if it can be expressed as a direct sum of simple Lie algebras.
\end{definition}
\begin{definition}
A Lie algebra, $\lag$, is called \textbf{abelian} if $[x,y] = 0$ for all $x,y\in\lag$.
\end{definition}
This leads us to the simplest example of a Lie algebra: Any vector space, $V$, with bracket defined by $[w,v] = 0$ for all $w,v\in V$ is trivially an abelian Lie algebra. We can also define a Lie algebra given any associative algebra, $(A,\circ)$, by giving it the bracket $[a,b]\coloneqq a\circ b-b\circ a$ for all $a,b\in A$. Condition (L1) is clearly satisfied; The associativity of $\circ$ will then give the Jacobi identity. One such example of this is the following:
\begin{definition}
Given a vector space, $V$, define $\gl(V)$ as the associative algebra $\en(V)$ with Lie bracket $[f,g] = f\circ g-g\circ f$ for any $f,g\in\en(V)$. Similarly, let $\gl(n)$ be the Lie algebra of $n\times n$ complex matrices with bracket defined by $[A,B] = AB-BA$.
\end{definition}
These two Lie algebras are important to the area of representation theory. First we will need the following:
\begin{definition}
Given two Lie algebras, $\lag$ and $\lah$, we define a \textbf{homomorphism} as a linear map $\phi\colon\lag\rightarrow\lah$ such that $\phi([x,y]_\lag) = [\phi(x),\phi(y)]_\lah$ where the subscript on the bracket denotes which algebra's bracket is being taken.
\end{definition}
Such a map, $\phi$, is called a monomorphism, epimorphism, endomorphism, isomorphism or automorphism according to its properties.
\begin{definition}
Given a Lie algebra $\lag$ and a vector space $V$, a \textbf{representation} of $\lag$ on $V$ is a homomorphism $\phi\colon\lag\rightarrow\gl(V)$. If $\phi$ is injective, we say that the representation is \textbf{faithful}.
\end{definition}
\begin{definition}
Given a Lie algebra $\lag$ and a vector space $V$, we call $V$ a \textbf{$\lag$-module} when there exists a bilinear action $\cdot\colon\lag\times V\rightarrow V$ such that $[x,y]\cdot v = x\cdot(y\cdot v)-y\cdot(x\cdot v)$ for all $x,y\in\lag$ and all $v\in V$. A subspace $W$ of $V$ is called a \textbf{submodule} when $x\cdot w\in W$ for all $x\in\lag$ and all $w\in W$. If the only submodules of $V$ are $\{0\}$ and $V$, it is called \textbf{irreducible}. A $\lag$-module that can be expressed as a direct sum of irreducible $\lag$-modules is called \textbf{completely reducible}.
\end{definition}
Note that given a representation $\phi\colon\lag\rightarrow\gl(V)$, we can view $V$ as a $\lag$-module under the action $x\cdot v = \phi(x)(v)$. Similarly, given a $\lag$-module, $V$, we can construct a representation $\phi\colon\lag\rightarrow\gl(V)$ via $x\mapsto \phi_x$ where $\phi_x(v) = x\cdot v$.
We now give an important example of a Lie algebra representation. Given a Lie algebra $\lag$ define $\ad_x$ by $\ad_x(y) = [x,y]$ for all $x,y\in\lag$. The \textbf{adjoint representation} is given by $\ad\colon\lag\rightarrow\gl(\lag)$ with $\ad(x) = \ad_x$. 
\begin{definition}
Let $\lag$ be a Lie algebra and $\kappa(-,-)\colon\lag\times\lag\rightarrow\C$ a bilinear form defined by $\kappa(x,y) = \tr(\ad(x)\circ\ad(y))$ for all $x,y\in\lag$. We call this form the \textbf{Killing form}.
\end{definition}
The Killing form is invariant (i.e. $\kappa([x,y],z) = \kappa(x,[y,z])$) and symmetric. Further, if a Lie algebra is finite dimensional and simple, then its Killing form is the unique (up to scalar multiple) symmetric, invariant bilinear form. 
\begin{theorem}[Cartan's Criterion]
A finite dimensional Lie algebra, $\lag$, is semisimple if and only if its Killing form is non-degenerate.
\end{theorem}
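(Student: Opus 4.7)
The plan is to reduce both directions to Cartan's criterion for solvability, which I would take as already established: a Lie subalgebra $\lah \subseteq \gl(V)$ is solvable if and only if $\tr(xy) = 0$ for all $x \in \lah$ and $y \in [\lah,\lah]$. Combined with the elementary observation that if $\lai$ is an ideal of $\lag$ then so is its $\kappa$-orthogonal complement $\lai^{\perp}$ (immediate from the invariance $\kappa([x,y],z) = \kappa(x,[y,z])$), this criterion essentially carries the whole argument.

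For the direction ($\Rightarrow$), suppose $\lag = \lag_1 \oplus \cdots \oplus \lag_k$ is a direct sum of simple ideals. A quick block computation of $\ad$ against the decomposition shows that distinct summands are $\kappa$-orthogonal and that $\kappa$ restricts to the Killing form $\kappa_i$ on each $\lag_i$, so it suffices to verify non-degeneracy on each simple summand. The radical $\ker \kappa_i$ is an ideal of $\lag_i$ by invariance, so simplicity leaves only $\ker\kappa_i = 0$ or $\ker\kappa_i = \lag_i$. In the latter case Cartan's criterion applied to $\ad(\lag_i) \subseteq \gl(\lag_i)$ would force $\ad(\lag_i)$ to be solvable, and since a simple nonabelian Lie algebra has trivial center, $\ad$ is injective on $\lag_i$, so $\lag_i$ itself would be solvable, contradicting $[\lag_i,\lag_i] = \lag_i \neq 0$.

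For the direction ($\Leftarrow$), assume $\kappa$ is non-degenerate. My first step is to show $\lag$ has no nonzero abelian ideal: if $\laa$ is such an ideal, $x \in \laa$, and $y \in \lag$, then $\ad(x)\ad(y)$ sends $\lag$ into $\laa$ and then sends $\laa$ into $[\laa,\laa] = 0$, so $(\ad(x)\ad(y))^2 = 0$; a nilpotent endomorphism has trace zero, giving $\kappa(x,y) = 0$ and hence $\laa \subseteq \ker\kappa = 0$. The second step shows that every ideal $\lai$ yields a decomposition $\lag = \lai \oplus \lai^{\perp}$: the intersection $\laj = \lai \cap \lai^{\perp}$ satisfies $\kappa|_{\laj} \equiv 0$, so Cartan's criterion makes $\ad(\laj)$ solvable; its kernel sits inside the center of $\lag$ and is thus an abelian ideal, which vanishes by step one, so $\laj$ itself is solvable; then the last nonzero term of the derived series of $\laj$ is an abelian ideal of $\lag$, again forcing $\laj = 0$. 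Since $[\lai,\lai^{\perp}] \subseteq \laj = 0$, the decomposition is one of Lie algebras, and iterating on a minimal nonzero ideal expresses $\lag$ as a direct sum of simple ideals.

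The substantive obstacle is Cartan's criterion for solvability itself, which rests on the Jordan--Chevalley decomposition and a somewhat delicate trace argument; everything above is mechanical once that tool is in hand. The one bookkeeping point I would verify carefully in the iteration is that each minimal nonzero ideal $\lai_1$ really is simple: any ideal of $\lai_1$ is automatically an ideal of $\lag$ thanks to the direct sum structure, so minimality rules out proper nonzero ideals, while the no-abelian-ideal result ensures $[\lai_1,\lai_1] \neq 0$, so the definition of simplicity is met.
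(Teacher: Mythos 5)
Your proof is correct. The paper does not actually prove this statement --- it appears in the expository background section with the proof deferred to the cited introductory texts (\cite{humphreys}, \cite{carter}) --- and your argument is essentially the standard one given there: reduce both directions to Cartan's solvability criterion, use invariance of $\kappa$ to show orthogonal complements of ideals are ideals, kill abelian ideals via the nilpotence of $(\ad(x)\ad(y))^2$, and split off minimal ideals inductively. All the steps check out, including the bookkeeping points you flag at the end (ideals of a direct summand are ideals of $\lag$, and $[\lai_1,\lai_1]\neq 0$ plus minimality gives $[\lai_1,\lai_1]=\lai_1$, so the paper's definition of simple is met).
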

The non-degeneracy of $\kappa$ on a semisimple Lie algebra $\lag$ gives a natural isomorphism between the underlying vector space $\lag$ and its dual vector space $\lag^*$. For the remainder of this section, we assume that our Lie algebras are semisimple unless otherwise noted.
\begin{definition}
A \textbf{Cartan subalgebra} (CSA) of a Lie algebra $\lag$ is a maximal abelian subalgebra $\lah$ of $\lag$ such that $\{\ad_h\mid h\in\lah\}$ is simultaneously diagonalizable in $\en(\lag)$.
\end{definition}
If a Cartan subalgebra exists, it is not unique. Finite dimensional simple Lie algebras always have CSAs and they are all conjugate under the group of inner automorphisms. This result immediately extends to semisimple Lie algebras by taking direct sums of the corresponding CSAs. This also gives us that the dimension of the CSA will be the same regardless of choice of CSA. This leads us to define the \textbf{rank} of a Lie algebra $\lag$ by $\rk(\lag) = \dim(\lah)$, where $\lah$ is a CSA of $\lag$ (note that we will use $\ell$ to represent the rank of the semisimple Lie algebra with which we are working throughout this work).

Let $\lag$ be a finite dimensional Lie algebra of rank $\ell$. Fix a CSA, $\lah$, of $\lag$ and choose a basis $h_1,\ldots,h_\ell$. Because $\{\ad_{h_i}\mid1\leq i\leq\ell\}$ is simultaneously diagonalizable in $\en(\lag)$, we can decompose $\lag$ into a direct sum of simultaneous eigenspaces for the $\ad_{h_i}$'s.
\begin{definition}
Let $0\neq\alpha\in\lah^*$ and define $\lag_\alpha = \{x\in\lag\mid[h,x] = \alpha(h)x\text{ for all }h\in\lah\}$. If $\lag_\alpha\neq \{0\}$, we call $\lag_\alpha$ a \textbf{root space} of $\lag$ and say $\alpha$ is a \textbf{root} of $\lag$. The set of roots is denoted by $\Phi$. The decomposition $$\lag = \lah\oplus\bigoplus_{\alpha\in\Phi}\lag_\alpha$$ is called a \textbf{root space decomposition} of $\lag$ with respect to $\lah$.
\end{definition}
Although $0\not\in\Phi$ by definition, $\lah = \lag_0$. We list some properties of the root spaces and roots.
\begin{proposition}
Let $\alpha,\beta\in\Phi\cup\{0\}$. Then:
\begin{enumerate}[(1)]
    \item $[\lag_\alpha,\lag_\beta]\subseteq\lag_{\alpha+\beta}$ with equality holding when $\alpha+\beta\in\Phi$, so $[\lag_\alpha,\lag_{-\alpha}]\subseteq\lah$,
    \item If $x\in\lag_\alpha$ and $y\in\lag_{\beta}$ with $\beta\neq -\alpha$ then $\kappa(x,y) = 0$,
    \item $\kappa\restrict{\lah\times\lah}$ is non-degenerate,
    \item If $\alpha\in\Phi$ then $-\alpha\in\Phi$.
\end{enumerate}
\end{proposition}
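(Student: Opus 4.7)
The plan is to prove the four parts in order, relying on the Jacobi identity for (1), on the block structure of $\ad_x\circ\ad_y$ on the root space decomposition for (2), and then on the global non-degeneracy of $\kappa$ given by Cartan's Criterion combined with (2) for parts (3) and (4).

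For (1), I would take $x\in\lag_\alpha$, $y\in\lag_\beta$, and an arbitrary $h\in\lah$, and apply the Jacobi identity to $[h,[x,y]]$. The computation $[h,[x,y]] = [[h,x],y]+[x,[h,y]] = \alpha(h)[x,y]+\beta(h)[x,y] = (\alpha+\beta)(h)[x,y]$ shows $[\lag_\alpha,\lag_\beta]\subseteq\lag_{\alpha+\beta}$, where we interpret $\lag_0=\lah$. In particular $[\lag_\alpha,\lag_{-\alpha}]\subseteq\lah$. The equality claim when $\alpha+\beta$ is a root requires the stronger structural fact that each root space is one-dimensional and that a certain $\mathfrak{sl}_2$-triple construction produces a nonzero bracket; this is the main obstacle in the proposition, since the $\mathfrak{sl}_2$-triple theory is not yet developed in the exposition, and I would either cite it from \cite{humphreys} or defer it.

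For (2), the idea is that part (1) gives $(\ad_x\ad_y)(\lag_\gamma)\subseteq\lag_{\gamma+\alpha+\beta}$ for each $\gamma\in\Phi\cup\{0\}$. If $\alpha+\beta\neq0$, then $\gamma$ and $\gamma+\alpha+\beta$ index distinct summands of the root space decomposition, so in a basis adapted to that decomposition the matrix of $\ad_x\ad_y$ has no nonzero diagonal entries. Hence $\kappa(x,y)=\tr(\ad_x\ad_y)=0$.

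Part (3) now follows quickly. Since $\lag$ is semisimple, Cartan's Criterion gives that $\kappa$ is non-degenerate on $\lag$. Suppose $h\in\lah$ satisfies $\kappa(h,h')=0$ for every $h'\in\lah$. By (2), $\kappa(h,\lag_\alpha)=0$ for every $\alpha\in\Phi$ because $0+\alpha\neq0$. Combined with the hypothesis on $\lah$, this yields $\kappa(h,\lag)=0$, and therefore $h=0$. For (4), I proceed by contradiction: if $\alpha\in\Phi$ but $-\alpha\notin\Phi$, then $\lag_{-\alpha}=0$, so for any nonzero $x\in\lag_\alpha$, part (2) gives $\kappa(x,\lag_\beta)=0$ for every $\beta\in\Phi\cup\{0\}$ with $\beta\neq-\alpha$, and $\kappa(x,\lag_{-\alpha})=0$ trivially. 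Thus $\kappa(x,\lag)=0$, contradicting non-degeneracy. Apart from the equality clause in (1), each step is a clean application of the Jacobi identity and Cartan's Criterion, so the work is essentially bookkeeping on the weight decomposition.
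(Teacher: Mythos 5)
Your argument is correct, and there is nothing in the paper to compare it against: this proposition appears in the expository background section and is stated without proof, with the reader referred to standard texts such as \cite{humphreys} or \cite{carter}. Your treatment of (1) via the Jacobi identity, of (2) via the block structure of $\ad_x\circ\ad_y$ on the root space decomposition, and of (3) and (4) via Cartan's Criterion is exactly the classical argument those references give. You are also right to single out the equality clause $[\lag_\alpha,\lag_\beta]=\lag_{\alpha+\beta}$ for $\alpha+\beta\in\Phi$ as the one genuinely nontrivial piece: it needs the unbroken-root-string/$\mathfrak{sl}_2$-triple machinery (or at least one-dimensionality of root spaces, which is only stated in the \emph{next} proposition of the paper), so deferring or citing it is the appropriate move at this point in the exposition rather than a defect in your proof.
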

Because $\kappa$ is nondegenerate on $\lah$, it induces an isomorphism between $\lah$ and $\lah^*$ which allows us to identify $\lah$ with $\lah^*$. For any $\alpha\in\Phi$, the corresponding vector in $\lah$ (which we denote by $h_\alpha$) is uniquely determined by the condition $\kappa(h_\alpha,h) = \alpha(h)$ for all $h\in\lah$. This will induce a bilinear form on $\lah^*$ defined by $(\alpha,\beta)\coloneqq\kappa(h_\alpha,h_\beta)$ for all $\alpha,\beta\in\lah^*$. We have further properties:
\begin{proposition}
Let $\alpha,\beta\in\Phi$.
\begin{enumerate}[(1)]
    \item If $x\in\lag_\alpha$ and $y\in\lag_{-\alpha}$ then $[x,y] = \kappa(x,y)h_\alpha$,
    \item $\dim(\lag_\alpha) = 1$,
    \item $[\lag_\alpha,\lag_{-\alpha}] = \C h_\alpha$,
    \item $2\alpha\not\in\Phi$.
\end{enumerate}
\end{proposition}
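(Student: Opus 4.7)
The plan is to prove (1) by a direct Killing-form computation, deduce (3) at once from it, and then bootstrap (2) and (4) via an embedded $\las\lal_2$ together with its representation theory.

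For (1), the previous proposition already gives $[x,y]\in\lah$. For every $h\in\lah$, invariance of $\kappa$ together with the defining relation $\alpha(h)=\kappa(h_\alpha,h)$ yield
\[
\kappa([x,y],h)=\kappa(x,[y,h])=\alpha(h)\,\kappa(x,y)=\kappa\bigl(\kappa(x,y)h_\alpha,\,h\bigr),
\]
and nondegeneracy of $\kappa|_{\lah\times\lah}$ forces $[x,y]=\kappa(x,y)h_\alpha$. Part (3) follows immediately: one inclusion is (1), and for the other the previous proposition says $\kappa$ pairs $\lag_\alpha$ trivially with every $\lag_\beta$ for $\beta\neq -\alpha$, so nondegeneracy of the global Killing form forces $\kappa|_{\lag_\alpha\times\lag_{-\alpha}}$ to be nondegenerate, producing some $x,y$ with $\kappa(x,y)\neq 0$ and $[x,y]$ a nonzero multiple of $h_\alpha$.

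Parts (2) and (4) hinge on the auxiliary lemma $(\alpha,\alpha)\neq 0$, which I would argue by contradiction. Suppose $\alpha(h_\alpha)=0$ and pick $x,y$ with $\kappa(x,y)=1$, so that $[x,y]=h_\alpha$ while $[h_\alpha,x]=\alpha(h_\alpha)x=0$ and similarly $[h_\alpha,y]=0$. Then $\Span\{x,y,h_\alpha\}$ is a three-dimensional solvable subalgebra; Lie's theorem provides a common upper-triangular basis for its adjoint action on $\lag$, in which $\ad h_\alpha=[\ad x,\ad y]$ is strictly upper triangular and hence nilpotent. But $h_\alpha\in\lah$ forces $\ad h_\alpha$ to be diagonalizable, so $\ad h_\alpha=0$. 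This places $h_\alpha$ in the center of $\lag$, which is trivial by semisimplicity, contradicting $\alpha\neq 0$.

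With $(\alpha,\alpha)\neq 0$, set $h=2h_\alpha/(\alpha,\alpha)$, choose a nonzero $e\in\lag_\alpha$, and use (3) to pick $f\in\lag_{-\alpha}$ with $[e,f]=h$, giving a subalgebra $S=\Span\{e,f,h\}\cong\las\lal_2$. The subspace
\[
M=\C h\oplus\bigoplus_{c\alpha\in\Phi}\lag_{c\alpha}
\]
is an $S$-submodule of $\lag$ on which $\ad h$ has weight $2c$ on $\lag_{c\alpha}$ and weight $0$ on $\C h$. Integrality of weights in finite-dimensional $\las\lal_2$-modules forces $2c\in\Z$, and the weight-$0$ subspace of $M$ is exactly $\C h$, so it has dimension one. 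By complete reducibility for $\las\lal_2$, this forces exactly one irreducible summand of even highest weight to appear in $M$, with multiplicity one; since $S\subseteq M$ is itself isomorphic to the adjoint representation $V(2)$, that summand must be $V(2)$. Reading off the weight-$2$ dimension gives $\dim\lag_\alpha=1$, which is (2), and reading off the weight-$2k$ dimension for integer $k\geq 2$ gives $\lag_{k\alpha}=0$; in particular $2\alpha\notin\Phi$, which is (4). The main obstacle is establishing $(\alpha,\alpha)\neq 0$, since the entire $\las\lal_2$-apparatus collapses without it.
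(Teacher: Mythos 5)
Your proof is correct. The paper offers no proof of this proposition at all---it appears in an expository background section that defers to standard texts such as Humphreys and Carter---and your argument is precisely the classical one from those sources: invariance plus nondegeneracy of $\kappa$ for (1) and (3), the Lie's-theorem argument ruling out $(\alpha,\alpha)=0$, and the analysis of the $\las\lal_2$-module $M=\C h\oplus\bigoplus_{c\alpha\in\Phi}\lag_{c\alpha}$ for (2) and (4).
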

\begin{definition}
Let $\R^\ell$ be the Euclidean space with inner product denoted by $(-,-)$ and let $\Phi\subseteq\R^\ell$. Then $\Phi$ is called a \textbf{root system} if the following are satisfied:
\begin{enumerate}[(R1)]
    \item $\Phi$ is finite, spans $\R^\ell$ and does not contain 0,
    \item if $\alpha\in\Phi$ then $k\alpha\in\Phi$ if and only if $k = \pm 1$,
    \item if $\alpha\in\Phi$ then $s_\alpha(\Phi) = \Phi$ where $s_\alpha(\beta) = \beta-\dfrac{2(\beta,\alpha)}{(\alpha,\alpha)}\alpha$ (i.e. reflection through the hyperplane perpendicular to $\alpha$),
    \item if $\alpha,\beta\in\Phi$ then $\langle\beta,\alpha\rangle\coloneqq\dfrac{2(\beta,\alpha)}{(\alpha,\alpha)}\in\Z$.
\end{enumerate}
We call the elements of $\Phi$ \textbf{roots}.
\end{definition}
We have that the set of roots of a finite dimensional, simple Lie algebra is a root system.
\begin{definition}
A subset $\Delta$ of a root system $\Phi$ is called a \textbf{base} when the following hold:
\begin{enumerate}[(B1)]
    \item $\Delta$ is a basis of $\R^\ell$,
    \item each root $\beta\in\Phi$ can be written in the form $\beta = \sum_{\alpha\in\Delta}k_\alpha\alpha$ with the $k_\alpha$'s all nonnegative or all nonpositive integers.
\end{enumerate}
We call the elements of $\Delta$ \textbf{simple roots} and note that choice of $\Delta$ is not unique for a given root system.
\end{definition}
We denote the set of \textbf{positive roots} (relative to a choice of $\Delta = \{\alpha_1,\ldots,\alpha_\ell\}$) by $\Phi^+ = \{\alpha\in\Phi\mid\alpha = k_1\alpha_1+\cdots+k_\ell\alpha_\ell,0\leq k_i\in\Z\}$. We can then define the set of \textbf{negative roots} as $\Phi^- = -\Phi^+$ and notice that $\Phi = \Phi^+\cup\Phi^-$.
\begin{definition}
The \textbf{root lattice} of a root system $\Phi$ is the integral span of the simple roots and is denoted by $Q$. We also define $Q^+ = \{\sum_{i = 1}^\ell k_i\alpha_i\mid0\leq k_i\in\Z\}$. We define a partial order on $\R^\ell$ by $\alpha\leq\beta$ when $\beta-\alpha\in Q^+$.
\end{definition}
\begin{definition}
Given a root system $\Phi$ with base $\Delta$, the \textbf{Weyl Group}, $\mathcal{W}$, is the group generated by the $s_{\alpha_i}$, that is $\mathcal{W} = \langle s_{\alpha}\mid\alpha\in\Delta\rangle$.
\end{definition}
Root systems aid in the complete classification of the finite dimensional simple (and by extension, semisimple) Lie algebras. By the Cartan-Killing Theorem, all finite dimensional simple Lie algebras can be classified as a member of one of four infinite families or one of five exceptional types. We can give these classifications in two ways.
\begin{definition}
Given a Lie algebra $\lag$ with root system $\Phi$ and base $\Delta$, we define the \textbf{Cartan Matrix} to be the $\ell\times\ell$ matrix, denoted $A$, with $A = [\langle\alpha_i,\alpha_j\rangle]$ where $\Delta = \{\alpha_1,\ldots,\alpha_\ell\}$.
\end{definition}
Since the roots of finite dimensional simple Lie algebras can have at most two different squared lengths, the Cartan matrix will give the corresponding Lie algebra once a choice of length for either the long or short roots is made. A different but equivalent way to classify these Lie algebras is by using \textbf{Dynkin diagrams}. These diagrams are constructed by drawing $\ell$ nodes representing $\alpha_1,\ldots,\alpha_\ell$ in $\Delta$. The number of edges connecting nodes $i$ and $j$ is equal to $A_{ij}A_{ji}$ (where $A_{ij}$ is the $ij^{\text{th}}$-entry of $A$). If two nodes are connected by more than one edge, an arrow is added pointing to the root whose squared length is shorter. We list these diagrams for the 4 infinite families and 5 exceptional types in tables \ref{DynkinABCD} and \ref{DynkinEFG} below.

\begin{table}[h]
\centering
\begin{tabular}{rcl}
$A_\ell$  &  ($\ell\geq 1$)  &  $\dynkin[text style/.style={scale=1.2}, labels = {1,2,\ell-1,\ell}, label]A{}$\\ \\
$B_\ell$  &  $(\ell\geq 2)$  &  $\dynkin[text style/.style={scale=1.2}, labels = {1,2,\ell-2,\ell-1,\ell}, label]B{}$\\ \\
$C_\ell$  &  $(\ell\geq 3)$  &  $\dynkin[text style/.style={scale=1.2}, labels = {1,2,\ell-2,\ell-1,\ell}, label]C{}$\\ \\
$D_\ell$  &  $(\ell\geq 4)$  &  $\dynkin[text style/.style={scale=1.2}, labels = {1,2,\ell-3,\ell-2,\ell-1,\ell}, label directions={,,,right,,}, label]D{}$\\ \\
\end{tabular}   
\caption{Dynkin diagrams for the infinite families $A_\ell,B_\ell,C_\ell$ and $D_\ell$}
\label{DynkinABCD}
\end{table}

\begin{table}[h]
\centering
\begin{tabular}{rcl}
$E_6$  &  &  \dynkin[text style/.style={scale=1.2}, label]E6\\ \\
$E_7$  &  &  \dynkin[backwards=true, text style/.style={scale=1.2}, label]E7\\ \\
$E_8$  &  &  \dynkin[backwards=true, text style/.style={scale=1.2}, label]E8\\ \\
$F_4$  &  &  \dynkin[text style/.style={scale=1.2}, label]F4\\ \\
$G_2$  &  &  \dynkin[text style/.style={scale=1.2}, labels = {2,1}, label]G2
\end{tabular}   
\caption{Dynkin diagrams for the exception types: $E_6, E_7, E_8, F_4$ and $G_2$}
\label{DynkinEFG}
\end{table}

\begin{definition}
Let $\lag$ be a Lie algebra with CSA $\lah$ and let $V$ be a finite dimensional $\lag$-module such that $\lah$ acts on $V$ simultaneously diagonalizably. We denote the simultaneous eigenspaces by $V_\mu = \{v\in V\mid h\cdot v = \mu(h)v\text{ for all }h\in\lah\}$ where $\mu\in\lah^*$. If $V_\mu\neq\{0\}$ we call $V_\mu$ a \textbf{weight space} and $\mu$ a \textbf{weight} of $V$. We denote the set of weights of $V$ by $\Pi(V)$. A nonzero vector $v\in V$ is called a \textbf{highest weight vector} (HWV) when $v\in V_\lambda$ for some $\lambda\in\Pi(V)$ and $x\cdot v = 0$ for all $x\in\lag_\alpha$ with $\alpha\in\Phi^+$.
\end{definition}
If $V$ is irreducible then it contains a unique HWV up to scalar multiple and $V$ is uniquely determined by the weight of its HWV. Such a module is called a \textbf{highest weight module} (HWM) and is denoted by $V^\lambda$ where $\lambda$ is the weight of its HWV. We will also use $\Pi^\lambda$ to denote $\Pi(V^\lambda)$ and note that if $\mu\in\Pi^\lambda$ then $\mu\leq\lambda$ in the partial order defined above.
\begin{definition}
The \textbf{weight space decomposition} of a HWM, $V^\lambda$ is the decomposition $$ V^\lambda = \bigoplus_{\mu\in\Pi^\lambda}V^\lambda_\mu.$$
\end{definition}
Note that if $\lag$ is a finite dimensional simple Lie algebra, $\lag$ is an irreducible $\lag$-module under the action of $\ad$. Then the weight space decomposition is simply the root space decomposition and its highest weight is called the \textbf{highest root} and denoted by $\theta$. Then $\Pi^\theta = \Phi\cup\{0\}$.
\begin{definition}
For a root system $\Phi$ with base $\Delta$, we define the associated \textbf{fundamental weights} $\lambda_1,\ldots,\lambda_\ell$ by $\langle\lambda_i,\alpha_j\rangle = \delta_{i,j}$ for $1\leq i,j\leq\ell$. The \textbf{weight lattice} associated with $\Phi$ is $P = \{\lambda\in\lah^*\mid\langle\lambda,\alpha_i\rangle\in\Z,\alpha_i\in\Delta\} = \Z\lambda_1+\ldots+\Z\lambda_\ell$. The set of \textbf{dominant integral weights} associated with $\Phi$ is $P^+ = \{\sum_{i = 1}^\ell k_i\lambda_i\mid 0\leq k_i\in\Z\}$. 
\end{definition}
For every $\lambda\in P^+$, there is a finite dimensional irreducible module $V^\lambda$ of $\lag$ with highest weight $\lambda$ and $\Pi^\lambda = \{w(\mu)\mid\mu\in P^+, \mu\leq\lambda,w\in\mathcal{W}\}\subseteq P$. Further, for any finite dimension irreducible $\lag$-module, $V$, there exists $\lambda\in P^+$ such that $V = V^\lambda$ and $w(\Pi^\lambda) = \Pi^\lambda$ for all $w\in\mathcal{W}$.
\begin{definition}
For a highest weight $\lag$-module, $V^\lambda$ we define the \textbf{character} of $V^\lambda$ by $$\ch(V^\lambda) = \sum_{\mu\in\Pi^\lambda}\dim(V^\lambda_\mu)e^\mu.$$
\end{definition}
The character is an element of the group ring $\Z[P]$. The character of a direct sum of $g$-modules is the sum of the characters of the individual modules and likewise the character of a tensor product of modules is the product of the characters of the individual modules. 
\begin{theorem}[Weyl Character Formula]
Let $\lambda\in P^+$. Then $$\ch(V^\lambda) = \dfrac{S_{\lambda+\rho}}{S_\rho}$$ where $$S_\mu = \displaystyle\sum_{w\in\mathcal{W}}\sgn(w)e^{w(\mu)-\rho}\text{ and }\rho = \displaystyle\sum_{i = 1}^\ell\lambda_i.$$
\end{theorem}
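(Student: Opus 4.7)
The plan is to follow the classical Harish-Chandra approach using Verma modules, the Casimir operator, and the Weyl denominator identity. First, I would introduce the Verma module $M^\mu$ for $\mu\in\lah^*$, defined as the induced module $U(\lag)\otimes_{U(\lab)}\C_\mu$, where $\lab = \lah\oplus\bigoplus_{\alpha\in\Phi^+}\lag_\alpha$ is the Borel subalgebra and $\C_\mu$ is the one-dimensional module on which $\lah$ acts by $\mu$ and each $\lag_\alpha$ with $\alpha\in\Phi^+$ acts trivially. The Poincar\'e--Birkhoff--Witt theorem equips $M^\mu$ with a basis indexed by ordered products of negative root vectors applied to a highest weight vector, from which one reads off $\ch(M^\mu) = e^\mu / \prod_{\alpha\in\Phi^+}(1-e^{-\alpha})$.

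Second, I would expand $\ch(V^\lambda)$ as a $\Z$-linear combination of Verma characters. Since $V^\lambda$ is the unique irreducible quotient of $M^\lambda$ and any highest weight module has composition factors of the form $V^\mu$ with $\mu\leq\lambda$, an upper-triangular inversion in the partial order on weights yields $\ch(V^\lambda) = \sum_{\mu\leq\lambda} c_\mu\,\ch(M^\mu)$ with $c_\lambda = 1$ and $c_\mu\in\Z$. The Casimir element $C\in U(\lag)$ acts on any highest weight module $V^\mu$ as the scalar $(\mu+\rho,\mu+\rho)-(\rho,\rho)$, and because $C$ acts by a single scalar on $V^\lambda$, the only $\mu$ that contribute must satisfy $(\mu+\rho,\mu+\rho) = (\lambda+\rho,\lambda+\rho)$, which together with $\mu\leq\lambda$ leaves only finitely many terms.

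Third, I would establish the Weyl denominator identity $\sum_{w\in\mathcal{W}}\sgn(w)e^{w(\rho)} = e^\rho\prod_{\alpha\in\Phi^+}(1-e^{-\alpha})$. The right side rewrites symmetrically as $\prod_{\alpha\in\Phi^+}(e^{\alpha/2}-e^{-\alpha/2})$, and each simple reflection $s_i$ permutes the positive roots other than $\alpha_i$, which it negates, so this product is $\mathcal{W}$-anti-invariant with leading term $e^\rho$, matching the left side term-by-term. Multiplying $\ch(V^\lambda)$ by $e^\rho\prod_\alpha(1-e^{-\alpha})$ then yields $\sum_\mu c_\mu\, e^{\mu+\rho}$, which is $\mathcal{W}$-anti-invariant because $\ch(V^\lambda)$ itself is $\mathcal{W}$-invariant thanks to $w(\Pi^\lambda)=\Pi^\lambda$. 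An anti-invariant polynomial with leading term $e^{\lambda+\rho}$ (from $c_\lambda = 1$) must equal $\sum_{w\in\mathcal{W}}\sgn(w)e^{w(\lambda+\rho)}$. Recasting both sides in terms of $S_\mu = \sum_{w\in\mathcal{W}}\sgn(w)e^{w(\mu)-\rho}$ and dividing delivers the stated formula.

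The hard part will be justifying the Verma character expansion cleanly: one needs both the PBW basis to compute $\ch(M^\mu)$ and the Casimir-based linkage principle to guarantee that the a priori infinite upper-triangular sum truncates to finitely many $\mu$ sharing the Casimir eigenvalue of $\lambda$. A secondary subtlety is showing the multiplicities $c_\mu$ are honest integers arising from composition factors rather than formal combinatorial inverses, which requires a careful induction on the height $\lambda-\mu\in Q^+$ together with the fact that any Verma module $M^\mu$ has finite-dimensional weight spaces. Once these two points are in place, the anti-invariance argument and the denominator identity combine into a short computation to produce the formula.
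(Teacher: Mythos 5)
The paper offers no proof of this theorem: it is stated as classical background, with the section explicitly deferring to standard references such as Humphreys and Carter. So there is no in-paper argument to compare against. On its own terms, your sketch is a correct outline of the standard Harish-Chandra/Verma-module proof: the PBW computation of $\ch(M^\mu) = e^\mu\big/\prod_{\alpha\in\Phi^+}(1-e^{-\alpha})$, the upper-triangular expansion of $\ch(V^\lambda)$ in Verma characters truncated by the Casimir eigenvalue condition, the Weyl denominator identity via anti-invariance of $\prod_{\alpha\in\Phi^+}(e^{\alpha/2}-e^{-\alpha/2})$, and the final identification of the anti-invariant sum $\sum_\mu c_\mu e^{\mu+\rho}$ with $\sum_{w\in\mathcal{W}}\sgn(w)e^{w(\lambda+\rho)}$. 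The one step you compress that deserves an explicit lemma is the last identification: after grouping $\sum_\mu c_\mu e^{\mu+\rho}$ into $\mathcal{W}$-orbits, you need that any dominant $\nu\leq\lambda$ with $(\nu+\rho,\nu+\rho)=(\lambda+\rho,\lambda+\rho)$ satisfies $\nu=\lambda$ (and that orbits with non-regular representative $\mu+\rho$ contribute zero to an anti-invariant expression); "leading term $e^{\lambda+\rho}$" alone does not rule out other alternating orbit sums appearing with nonzero coefficient. With that lemma supplied, the argument is complete and is the proof one would find in the cited texts.
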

\begin{theorem}[Weyl Denominator Formula]
$$S_{\rho} = \prod_{\alpha\in\Phi^+}(1-e^{-\alpha}).$$
\end{theorem}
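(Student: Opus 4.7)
The plan is to multiply both sides of the claimed identity by $e^\rho$, reducing it to the equivalent statement $N := \sum_{w \in \mathcal{W}} \sgn(w)\, e^{w(\rho)} = e^\rho \prod_{\alpha \in \Phi^+}(1-e^{-\alpha}) =: D$. I would show that $N$ and $D$ both transform by $\sgn$ under the Weyl group, that both have $e^\rho$ as a leading term with coefficient $1$, and then invoke a uniqueness principle for such alternating elements of $\Z[P]$ to deduce $N = D$.

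That $N$ is alternating is immediate from re-indexing: for $w' \in \mathcal{W}$, $w' \cdot N = \sum_w \sgn(w)\, e^{w'w(\rho)} = \sgn(w')\, N$. For $D$, it suffices to check $s_i \cdot D = -D$ for each simple reflection $s_i$, since these generate $\mathcal{W}$. Two standard facts drive this: (i) $s_i$ permutes $\Phi^+ \setminus \{\alpha_i\}$ and sends $\alpha_i \mapsto -\alpha_i$; (ii) $s_i(\rho) = \rho - \alpha_i$, which follows from $\langle \rho, \alpha_i \rangle = 1$ because $\rho = \lambda_1 + \cdots + \lambda_\ell$ and $\langle \lambda_j, \alpha_i \rangle = \delta_{ij}$. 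A direct computation then gives
$$s_i \cdot D \;=\; e^{\rho-\alpha_i}(1-e^{\alpha_i})\!\!\prod_{\alpha \in \Phi^+,\, \alpha\ne\alpha_i}\!\!(1-e^{-\alpha}) \;=\; -D,$$
after factoring $-e^{\alpha_i}$ out of the toggled factor. Expanding the product next, $D = \sum_{S \subseteq \Phi^+} (-1)^{|S|}\, e^{\rho - \sum_{\alpha \in S}\alpha}$, so $e^\rho$ occurs only for $S = \emptyset$ with coefficient $1$; meanwhile $e^\rho$ occurs in $N$ only from $w = 1$, since $\rho$ is strictly dominant and hence regular. By the alternating property, the coefficient of $e^{w(\rho)}$ in $D$ must equal $\sgn(w)$, which reproduces every term of $N$ inside $D$.

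The main obstacle is to show $D$ has no further nonzero terms, i.e., $\mathrm{supp}(D) \subseteq \mathcal{W}\cdot\rho$. The cleanest route invokes the classical theorem that the $\Z[P]^{\mathcal{W}}$-module of $\mathcal{W}$-alternating elements of $\Z[P]$ is free of rank one, generated by any alternating element whose unique dominant-weight coefficient is at $\rho$; applied to both sides, $D = f \cdot N$ for some $\mathcal{W}$-invariant $f$, and comparing the coefficients of $e^\rho$ (each equal to $1$) forces $f = 1$. A more concrete alternative is a sign-reversing involution on the subsets $S \subseteq \Phi^+$ contributing to a fixed weight $\mu \notin \mathcal{W}\cdot\rho$, toggling membership of some canonically chosen root determined by $\mu$; the regularity of $\rho$ is what guarantees such a root always exists. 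Either route completes the identification $N = D$, yielding the Weyl denominator formula.
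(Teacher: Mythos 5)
The paper itself offers no proof of this statement: it appears in the expository background section on finite dimensional Lie algebras, where the results are attributed to standard references, so there is nothing internal to compare against. Your argument is the classical textbook proof (essentially the one in Humphreys \S 24.3), and its skeleton is sound: the reduction to $N=D$, the alternation of $N$ by re-indexing, the alternation of $D$ via the two facts that $s_i$ permutes $\Phi^+\setminus\{\alpha_i\}$ and that $s_i(\rho)=\rho-\alpha_i$, the expansion showing $e^\rho$ occurs in $D$ with coefficient $1$ (no nonempty $S\subseteq\Phi^+$ has $\sum_{\alpha\in S}\alpha=0$), and the deduction that the coefficient of $e^{w(\rho)}$ in $D$ is $\sgn(w)$ are all correct.

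The one step you rightly flag as the obstacle deserves a cleaner finish than either route you sketch. The standard closing argument is: since $D$ is alternating, any weight $\mu$ in its support fixed by a reflection has coefficient $0$, so the support consists of regular weights and $D$ is determined by its coefficients at strictly dominant weights; every weight in the support has the form $\rho-\sum_{\alpha\in S}\alpha\leq\rho$; and a strictly dominant weight $\lambda\leq\rho$ must equal $\rho$ (because $\lambda-\rho$ is then dominant while $\rho-\lambda\in Q^+$, forcing $|\lambda-\rho|^2\leq 0$). This immediately gives $\mathrm{supp}(D)\subseteq\mathcal{W}\cdot\rho$ and hence $D=N$, with no appeal to the freeness theorem or to an involution. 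Two cautions about your alternatives: the freeness of the module of alternating elements over $\Z[P]^{\mathcal{W}}$ is usually \emph{proved} via divisibility by the Weyl denominator, so invoking it here skirts circularity unless you are careful about which form of the theorem you cite; and the step ``comparing the coefficients of $e^\rho$ forces $f=1$'' is not a single-coefficient comparison --- the coefficient of $e^\rho$ in $f\cdot N$ receives contributions from every $f_\mu e^\mu\cdot e^{w(\rho)}$ with $\mu+w(\rho)=\rho$, so you need a maximality (leading-term) argument on the support of $f$ together with the bound $\mathrm{supp}(D)\subseteq\rho-Q^+$. The sign-reversing involution in your second route is not canonical in the way you suggest and would take real work to construct; I would drop it in favor of the dominance lemma above.
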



\section{Infinite Dimensional Affine Lie Algebras}
\begin{definition}
\label{AffineDef}
Let $\lag$ be a finite dimensional simple Lie algebra with non-degenerate, symmetric, invariant bilinear form $(-,-)$. Let $\C[t,t^{-1}]$ be the associative algebra of Laurent polynomials in the variable $t$. The (untwisted) \textbf{affine Kac-Moody Lie algebra} $\widehat{\lag}$ associated with $\lag$ is $$\widehat{\lag} = \lag\otimes_\C\C[t,t^{-1}]\oplus\C c\oplus\C d$$ with brackets given by $$[x(m),y(n)] = [x,y](m+n)+m\delta_{m,-n}(x,y)c,$$ $$[c,\widehat{g}] = 0\text{ and }[d,x(m)]= mx(m)$$
where $x(m)$ denotes $x\otimes t^m$ for any $x\in\lag$ and $m\in\Z$.
\end{definition}
We will often refer to these algebras simply as ``affine (Lie) algebras." \begin{definition}
For $\lah$ the CSA of $\lag$, the \textbf{Heisenberg algebra} is $$\widehat{\lah} = \lah\otimes_\C\C[t,t^{-1}]\oplus\C c$$ with brackets
$$[h_i(m),h_j(n)] = m\delta_{m,-n}(h_i,h_j)c$$ and $$[\widehat{h},c] = 0.$$
\end{definition}
The Heisenberg algebra plays an important role in the representation theory of $\widehat{g}$. Note that we can identify $\lag$ with $\lag\otimes t^0$ in $\widehat{\lag}$. Just as in the finite dimensional simple case, $\widehat{\lag}$ will have a CSA, $H = \lah\oplus\C c\oplus\C d$. We therefore also get a root space decomposition, $$\widehat{g} = H\oplus\bigoplus_{\alpha\in\widehat{\Phi}}\widehat{\lag}_\alpha$$ where $\widehat{\Phi}$ denotes the \textbf{affine root system}. We will define the multiplicity of a root by $\mult(\alpha) = \dim(\widehat{\lag}_\alpha)$. By this definition, all roots of a finite dimensional simple Lie algebra have multiplicity 1. We will construct $\widehat{\Phi}$ by using our identification to extend the simple roots $\alpha_i$ of $\lag$ to be elements of $H^*$. We then define $\Lambda_0,\delta\in H^*$ such that $\Lambda_0(\lah) = \delta(\lah) = 0$, $\Lambda_0(d) = \delta(c) = 0$ and $\delta(d) = \Lambda_0(c) = 1$. We then get that $\widehat{\Phi} = \{\alpha+n\delta\mid\alpha\in\Phi,n\in\Z\}\cup\{n\delta\mid0\neq n\in\Z\}$. The two sets in this union are called the \textbf{real} and \textbf{imaginary} roots, respectively. The simple roots, $\widehat{\Delta}$, of $\widehat{\lag}$ are $\widehat{\Delta} = \{\alpha_0,\alpha_1,\ldots,\alpha_\ell\}$ where $\alpha_0\coloneqq \delta-\theta$ ($\theta$ is the highest root of $\Phi$ as above). We can extend the inner product on $\lah$ to $H$ so that $(\lah,c) = (\lah,d) = 0$, $(c,c) = (d,d) = 0$ and $(c,d) = 1$. Similarly we will extend the inner product on $\lah^*$ to $H^*$ so that $(\lah^*,\Lambda_0) = (\lah^*,\delta) = 0$, $(\Lambda_0,\Lambda_0) = (\delta,\delta) = 0$ and $(\Lambda_0,\delta) = 1$. We can also extend our partial order on $\lah^*$ to $H^*$ by $\mu\leq\lambda$ when $\lambda-\mu = \sum_{i = 0}^\ell k_i\alpha_i$ with $0\leq k_i\in\Z$.
\begin{definition}
The \textbf{fundamental weights} of $\widehat{\lag}$ are defined by analogy with the finite dimensional simple case. They are either of the form
$\Lambda_0$ or $\Lambda_i = n_i\Lambda_0+\lambda_i$ for $i = 1,\ldots,\ell$ where the $\lambda_i$ are fundamental weights of $\lag$. For $i = 1,\ldots,\ell$, we call $n_i$ the \textbf{level} of $\Lambda_i$ and in the case of $\Lambda_0$, we say that its level is 1.
\end{definition}
As in the finite dimensional case, we have a \textbf{root lattice}, \textbf{weight lattice} and set of \textbf{dominant integral weights}, defined respectively as:
$$\widehat{Q} = \Z\alpha_0+\Z\alpha_1+\ldots+\Z\alpha_\ell$$
$$\widehat{P} = \Z\Lambda_0+\Z\Lambda_1+\ldots+\Z\Lambda_\ell$$ and
$$\widehat{P}^+ = \left\{\sum_{i = 0}^\ell k_i\Lambda_i\mid 0\leq k_i\in\Z\right\}.$$
\begin{definition}
The \textbf{affine Weyl group} $\widehat{\mathcal{W}}$ is the group generated by simple reflections $s_{\alpha_i}$ for $\alpha_i\in\widehat{\Delta}$.
\end{definition}
\begin{definition}
Let $\widehat{\lag}$ be an affine algebra with underlying finite dimensional simple Lie algebra $\lag$ and finite root system $\Phi$. Fix a base $\Delta$ of $\Phi$. The \textbf{dual root lattice}, $M^*$ is defined as $M^* = \Z\{\alpha\mid \alpha\in\Delta\text{ and }\alpha\text{ is long}\}+k\Z\{\alpha\mid \alpha\in\Delta\text{ and }\alpha\text{ is short}\}$ where $k$ is the ratio of squared lengths of a long root to squared length of a short root.
\end{definition}
For each $\alpha\in M^*$, let $$t_\alpha(\lambda)\coloneqq\lambda+\lambda(c)\alpha - \left((\lambda,\alpha)+\dfrac{1}{2}(\alpha,\alpha)\lambda(c)\right)\delta$$ for all $\lambda\in\mathfrak{h}^*$ and define $t(M^*) = \{t_\alpha\mid\alpha\in M^*\}$. Note first that $t_\alpha\circ t_\beta = t_{\alpha+\beta} = t_{\beta+\alpha} = t_{\beta}\circ t_{\alpha}$, $t_0(\lambda) = \lambda$ and $t_\alpha\circ t_{-\alpha} = t_0$ for all $\alpha,\beta\in M^*$ and $\lambda\in H^*$. Thus, $t(M^*)$ is an abelian group. In fact, as abelian groups, $t(M^*)\cong M^*$. Since $\mathcal{W}(M^*) = M^*$ we can define an action of $\mathcal{W}$ on $t(M^*)$ by $wt_\alpha w^{-1} = t_{w(\alpha)}$ for $w\in\mathcal{W}$ and $\alpha\in M^*$. We can then express the affine Weyl group as: $\widehat{\cal W} = \mathcal{W}\ltimes t(M^*)$.
\begin{definition}
For each $\Lambda\in\widehat{P}^+$, there exists an irreducible highest weight $\widehat{\lag}$-module, $V^\Lambda$. The \textbf{level} of the representation is $\Lambda(c)$ and is the scalar by which $c$ acts on $V^\Lambda$. The module $V^\Lambda$ has a weight space decomposition $$V^\Lambda = \bigoplus_{\mu\in\Pi^\Lambda}V_{\mu}^\Lambda$$ where, as before, $V_\mu^\Lambda = \{v\in V^\Lambda\mid h\cdot v = \mu(h)v\text{ for all }h\in H\}$ for $\mu\in H^*$ and $\Pi^\Lambda = \{\mu\in\widehat{P}\mid V_\mu^\Lambda\neq 0\} = \{w(\mu)\in\widehat{P}\mid\mu\in\widehat{P}^+,\mu\leq\Lambda,w\in\widehat{\mathcal{W}}\}$.
\begin{definition}
The \textbf{character} of $V^\Lambda$ is defined by $$\ch(V^\Lambda) = \sum_{\mu\in\Pi^\Lambda}\dim(V_\mu^\Lambda)e^\mu.$$ In order to convert this infinite sum of terms from $\Z[\widehat{P}]$ into a power series in $\Z[e^{-\alpha_0},\ldots,e^{-\alpha_\ell}]$, we define the \textbf{graded dimension} of $V^\Lambda$ as $\gr(V^\Lambda) = e^{-\Lambda}\ch(V^\Lambda)$.
\end{definition}
\begin{theorem}[Weyl-Kac Character Formula]
Let $\Lambda\in\widehat{P}^+$. Then $$\ch(V^\Lambda) = \frac{S_{\Lambda+\widehat{\rho}}}{S_{\widehat{\rho}}}$$ where $S_\mu = \displaystyle\sum_{w\in\widehat{\mathcal{W}}}\sgn(w)e^{w\mu-\widehat{\rho}}$ and $\widehat{\rho} = \displaystyle\sum_{i = 0}^\ell\lambda_i$.
\end{theorem}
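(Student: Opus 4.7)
The plan is to follow the classical proof due to Kac, modeled on the Weyl character formula but adapted to the infinite-dimensional setting. First I would introduce the Verma module $M(\Lambda)$, the universal highest weight $\widehat{\lag}$-module of highest weight $\Lambda$. Applying the Poincar\'e--Birkhoff--Witt theorem to the negative root part of $\widehat{\lag}$, its character admits the product expansion
$$\ch(M(\Lambda)) = \frac{e^\Lambda}{\displaystyle\prod_{\alpha\in\widehat{\Phi}^+}(1-e^{-\alpha})^{\mult(\alpha)}}.$$
Since $V^\Lambda$ is the unique irreducible quotient of $M(\Lambda)$ and since every highest weight module admits a composition series whose factors are irreducible quotients of Verma modules with strictly smaller highest weights, I can write $\ch(V^\Lambda) = \sum_{\mu\leq\Lambda} a_\mu\,\ch(M(\mu))$ with $a_\Lambda = 1$ and $a_\mu\in\Z$.

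Second, I would deploy the affine Casimir operator $\Omega$, built from the invariant bilinear form on $\widehat{\lag}$. It commutes with the $\widehat{\lag}$-action on every highest weight module and therefore acts by a scalar; a direct computation identifies that scalar on $M(\mu)$ as $(\mu+\widehat{\rho},\mu+\widehat{\rho})-(\widehat{\rho},\widehat{\rho})$. Matching scalars between $V^\Lambda$ and every contributing $M(\mu)$ forces $a_\mu \neq 0$ only when
$$|\mu+\widehat{\rho}|^2 = |\Lambda+\widehat{\rho}|^2,$$
which severely restricts the possible summands.

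Third, I would establish Weyl-group antisymmetry. Setting $R = \prod_{\alpha\in\widehat{\Phi}^+}(1-e^{-\alpha})^{\mult(\alpha)}$, I would show that $R\cdot\ch(V^\Lambda)$ satisfies $w(R\cdot\ch(V^\Lambda)) = \sgn(w)\, R\cdot\ch(V^\Lambda)$ for every $w\in\widehat{\mathcal{W}}$. The crucial input is the integrability of $V^\Lambda$: each simple root $\alpha_i$ determines an $\mathfrak{sl}_2$-triple acting locally finitely, which forces the weight multiplicities of $V^\Lambda$ to be $s_{\alpha_i}$-invariant; paired with the fact that $R$ is antisymmetric under each simple reflection, the product is antisymmetric under the entire group generated by them. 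Combined with the Casimir constraint, this antisymmetry pins the coefficients down as $a_{w(\Lambda+\widehat{\rho})-\widehat{\rho}} = \sgn(w)$ and $0$ otherwise, yielding
$$R\cdot\ch(V^\Lambda) = \sum_{w\in\widehat{\mathcal{W}}} \sgn(w)\, e^{w(\Lambda+\widehat{\rho})-\widehat{\rho}} = S_{\Lambda+\widehat{\rho}}.$$
Specializing to $\Lambda = 0$, where $V^0$ is the trivial $\widehat{\lag}$-module with character $1$, yields the affine denominator identity $R = S_{\widehat{\rho}}$; substitution then produces the desired formula $\ch(V^\Lambda) = S_{\Lambda+\widehat{\rho}}/S_{\widehat{\rho}}$.

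The main obstacle will be justifying these formal manipulations in the infinite-dimensional setting. Unlike the finite case treated earlier, $\widehat{\mathcal{W}}$ is infinite, the sum $\sum a_\mu \ch(M(\mu))$ is generically infinite, and $S_{\Lambda+\widehat{\rho}}$ is itself an infinite alternating sum. I would need to work in a suitable completion of the group ring $\Z[\widehat{P}]$ consisting of formal series supported on cones bounded above by a fixed weight, verify that multiplication, the Weyl-group action, and the Casimir-scalar argument all make sense there, and check that the supports of the various Verma characters and of $S_{\Lambda+\widehat{\rho}}$ remain compatible after multiplication by $R$. Handling this convergence in tandem with integrability --- which demands control over how each $\mathfrak{sl}_2$-triple acts on generally infinite weight strings --- is the genuine technical crux of the proof.
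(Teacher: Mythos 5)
The paper states the Weyl--Kac character formula as expository background and gives no proof of its own (the surrounding material defers to standard references such as Kac's book), so there is no in-paper argument to compare yours against. Your outline is the standard proof from Kac, Chapter 10, and it is correct: the Verma character via PBW, the expansion $\ch(V^\Lambda)=\sum_{\mu\leq\Lambda}a_\mu\ch(M(\mu))$, the generalized Casimir scalar $|\mu+\widehat{\rho}|^2-|\widehat{\rho}|^2$ restricting the support, the $\widehat{\mathcal{W}}$-antisymmetry of $R\cdot\ch(V^\Lambda)$ coming from integrability, the identification of the surviving coefficients as $\sgn(w)$ using regularity of $\Lambda+\widehat{\rho}$, and the denominator identity from the trivial module are exactly the right ingredients, and you correctly flag the need to work in a completion of $\Z[\widehat{P}]$ supported on cones below a fixed weight as the main technical point to nail down.
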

\begin{theorem}[Weyl-Kac Denominator Formula]
$$S_{\widehat{\rho}} = \prod_{\alpha\in\widehat{\Phi}^+}(1-e^{-\alpha})^{\mult(\alpha)}.$$
\end{theorem}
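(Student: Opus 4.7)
The plan is to derive the denominator formula from the Weyl-Kac character formula by computing $\ch(V^\Lambda)$ in a second independent way whose denominator is the product side, and then comparing the two expressions (most cleanly at $\Lambda=0$). Since the character formula is already in hand, the work is to construct this alternative expression for $\ch(V^\Lambda)$ from representation-theoretic first principles.

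First I would introduce the Verma module $M(\Lambda) = U(\widehat{\lag})\otimes_{U(\lab^+)}\C_\Lambda$, where $\lab^+ = H\oplus\bigoplus_{\alpha\in\widehat{\Phi}^+}\widehat{\lag}_\alpha$ and $\C_\Lambda$ is the one-dimensional $\lab^+$-module on which $H$ acts by $\Lambda$ and all positive root spaces act trivially.  By the Poincar\'e-Birkhoff-Witt theorem applied to $\widehat{\lan}^- = \bigoplus_{\alpha\in\widehat{\Phi}^+}\widehat{\lag}_{-\alpha}$, the module $M(\Lambda)$ is free of rank one over $U(\widehat{\lan}^-)$, so counting weights yields
$$\ch M(\Lambda) = e^{\Lambda}\prod_{\alpha\in\widehat{\Phi}^+}(1-e^{-\alpha})^{-\mult(\alpha)}.$$

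Next I would invoke the affine Bernstein-Gelfand-Gelfand resolution (Garland-Lepowsky, building on Kac-Kazhdan), which for $\Lambda\in\widehat{P}^+$ expresses $V^\Lambda$ as the alternating sum of Verma modules on the $\widehat{\mathcal{W}}$-orbit of $\Lambda$ under the dot action, yielding
$$\ch V^\Lambda = \sum_{w\in\widehat{\mathcal{W}}}\sgn(w)\,\ch M\bigl(w(\Lambda+\widehat{\rho})-\widehat{\rho}\bigr) = \frac{\sum_{w\in\widehat{\mathcal{W}}}\sgn(w)\,e^{w(\Lambda+\widehat{\rho})-\widehat{\rho}}}{\prod_{\alpha\in\widehat{\Phi}^+}(1-e^{-\alpha})^{\mult(\alpha)}} = \frac{S_{\Lambda+\widehat{\rho}}}{\prod_{\alpha\in\widehat{\Phi}^+}(1-e^{-\alpha})^{\mult(\alpha)}}.$$
Specializing to $\Lambda=0$, where $V^0$ is the one-dimensional trivial module with $\ch V^0 = 1$ (or, equivalently, cancelling the common numerator $S_{\Lambda+\widehat{\rho}}$ against the Weyl-Kac character formula $\ch V^\Lambda = S_{\Lambda+\widehat{\rho}}/S_{\widehat{\rho}}$), yields the desired identity
$$S_{\widehat{\rho}} = \prod_{\alpha\in\widehat{\Phi}^+}(1-e^{-\alpha})^{\mult(\alpha)}.$$

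The main obstacle is interpreting the infinite sums on both sides and proving the BGG-type resolution itself in the affine setting.  Since $\widehat{\mathcal{W}} = \mathcal{W}\ltimes t(M^*)$ contains the infinite translation lattice, the sums over $\widehat{\mathcal{W}}$ are not finite and must be read as formal series in $e^{-\alpha_0},\ldots,e^{-\alpha_\ell}$; one must check that every exponent $w(\widehat{\rho})-\widehat{\rho}$ lies in $-\widehat{Q}^+$ and that only finitely many $w$ contribute to each fixed monomial, so that the formal identity makes sense.  Constructing the BGG resolution in the affine case also requires the Kac-Kazhdan analysis of singular vectors in affine Verma modules, which is considerably heavier than its finite-dimensional counterpart.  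A more hands-on alternative, better aligned with the theta-function viewpoint used later in the dissertation, would be to split the Weyl-group sum along cosets of $\mathcal{W}$ in $\widehat{\mathcal{W}} = \mathcal{W}\ltimes t(M^*)$, apply the classical Weyl denominator formula on each coset, and recognize the resulting translation-indexed sum as a Jacobi triple product identity, thereby converting the denominator formula directly into a theta-function identity.
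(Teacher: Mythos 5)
The paper states the Weyl--Kac Denominator Formula as classical background and gives no proof of it, so there is no in-paper argument to compare yours against. On its own terms, your outline is a correct sketch of one standard proof: the PBW computation of $\ch M(\Lambda)$ is right, the Garland--Lepowsky resolution does yield $\ch V^\Lambda$ as the alternating sum you write (with the dot-action shift matching the paper's convention $S_\mu = \sum_w \sgn(w)e^{w\mu-\widehat{\rho}}$), and specializing to $\Lambda=0$, where $V^0$ is trivial, gives the identity without any circular appeal to the character formula. You also correctly flag the two genuine burdens: the formal-series interpretation (every $w(\widehat{\rho})-\widehat{\rho}$ lies in $-\widehat{Q}^+$ and only finitely many $w$ hit a given monomial), and the fact that the affine BGG resolution is itself a substantial theorem resting on the Kac--Kazhdan analysis. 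One remark: the route most texts (e.g.\ Kac, Chapter 10) actually take is lighter than a full BGG resolution --- one only needs the weaker statement that $\ch V^\Lambda$ is a $\Z$-linear combination of Verma characters $\ch M(\mu)$ with $|\mu+\widehat{\rho}|^2=|\Lambda+\widehat{\rho}|^2$, which follows from the Casimir operator and a filtration argument, and then $\widehat{\mathcal{W}}$-anti-invariance pins down the coefficients as $\sgn(w)$. Your proposed alternative via splitting $\widehat{\mathcal{W}}=\mathcal{W}\ltimes t(M^*)$ and the Jacobi triple product is the Macdonald-identity viewpoint and is well aligned with the theta-function machinery the dissertation uses, but as stated it is a verification strategy rather than a representation-theoretic proof; either route would need to be carried out in full to constitute a complete argument.
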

Each affine algebra has a generalized Cartan matrix and an extended Dynkin diagram. Below we list the extended Dynkin diagrams for five affine algebras we will need to perform our calculations. Note that a black node corresponds to $\alpha_0$ and that the labeling $X_n^{(1)}$ denotes an affine algebra whose associated finite dimensional simple Lie algebra is of type $X_n$.
\end{definition}

\begin{table}[h]
\centering
\begin{tabular}{rc}
$D_4^{(1)}$  &  \dynkin[text style/.style={scale=1.2}, label]D[1]4\\ \\
$E_6^{(1)}$  &  \dynkin[text style/.style={scale=1.2}, label]E[1]6\\ \\
$E_8^{(1)}$  &  \dynkin[backwards=true, text style/.style={scale=1.2}, label]E[1]8\\ \\
$F_4^{(1)}$  &  \dynkin[text style/.style={scale=1.2}, label]F[1]4\\ \\
$G_2^{(1)}$  &  \dynkin[text style/.style={scale=1.2}, labels = {0,2,1}, label]G[1]2
\end{tabular}   
\caption{Dynkin diagrams of some affine algebras}
\label{DynkinAff}
\end{table}


\section{Root Systems and Projections}
We now show that a Lie algebra of type $E_8$ will contain a subalgebra of type $F_4+G_2$. The theory of finite dimensional semisimple Lie algebras discussed in the first section of this chapter allows us to obtain this result by showing that the dual of the CSA of a type $E_8$ algebra will contain a subspace which is the dual of the CSA of a type $F_4+G_2$ algebra. This is accomplished by constructing explicit orthogonal projections on the real span of the simple roots. Although our algebras are all taken over $\C$, we momentarily work over $\R$ in order to preserve the positive definite bilinear form on the roots and thus preserve the geometry of the root systems. These results can be lifted to the full dual spaces by taking complex linear combinations. Our results will then be extended to the corresponding affine algebras. 

We will begin this section by giving some more information about the roots and weights of the algebras with which we are concerned. We first list the Cartan matrices for Lie algebras of types $E_8$, $E_6$, $D_4$, $F_4$ and $G_2$ (note that these can quickly be recovered from the Dynkin diagrams). We then use the standard convention that long roots have squared length 2 and recover the inner product matrix, that is the symmetric matrix $B$ with entries $B_{ij} = (\alpha_i,\alpha_j)$. We also give the fundamental weights in terms of the simple roots and give diagram automorphisms of some of the Dynkin diagrams.

\subsection{Finite Dimensional Simple Lie Algebras}
\subsubsection{Type $E_8$}
We will denote our simple roots by $\alpha_1,\ldots,\alpha_8$.
\subsubsection*{Cartan Matrix}
$$A_{E_8} = 
\begin{pmatrix*}[r]
2  & -1 & 0  & 0  & 0  & 0  & 0  & 0  \\
-1 & 2  & -1 & 0  & 0  & 0  & 0  & 0  \\
0  & -1 & 2  & -1 & 0  & 0  & 0  & 0  \\
0  & 0  & -1 & 2  & -1 & 0  & 0  & 0  \\
0  & 0  & 0  & -1 & 2  & -1 & -1 & 0  \\
0  & 0  & 0  & 0  & -1 & 2  & 0  & 0  \\
0  & 0  & 0  & 0  & -1 & 0  & 2  & -1 \\
0  & 0  & 0  & 0  & 0  & 0  & -1 & 2 
\end{pmatrix*}
$$
\subsubsection*{Inner Product Matrix}
Since there is only one root length, the $ij^{\text{th}}$-entry of $A_{E_8}$ is $$2(\alpha_i,\alpha_j)/|\alpha_j|^2 = 2(\alpha_i,\alpha_j)/2 = (\alpha_i,\alpha_j)$$ and so $B_{E_8} = A_{E_8}$.

\subsubsection*{Fundamental Weights}
We use the notation $(a_1,\ldots,a_8)\coloneqq a_1\alpha_1+\cdots+a_8\alpha_8$. Our fundamental weights are:
\begin{align*}
\lambda_1 &= (2,3,4,5,6,3,4,2),&\lambda_2 &= (3,6,8,10,12,6,8,4),\\
\lambda_3 &= (4,8,12,15,18,9,12,6),&\lambda_4 &= (5,10,15,20,24,12,16,8),\\
\lambda_5 &= (6,12,18,24,30,15,20,10),&\lambda_6 &= (3,6,9,12,15,8,10,5),\\
\lambda_7 &= (4,8,12,16,20,10,14,7),&\lambda_8 &= (2,4,6,8,10,5,7,4).
\end{align*}
Note that the weight lattice and root lattice of type $E_8$ are the same, $P_{E_8} = Q_{E_8}$.
\subsubsection{Type $E_6$}
We will denote our simple roots by $\alpha'_1,\ldots,\alpha'_6$.
\subsubsection*{Cartan Matrix}
$$A_{E_6} = 
\begin{pmatrix*}[r]
2  & -1 & 0  & 0  & 0  & 0  \\
-1 & 2  & -1 & 0  & 0  & 0  \\
0  & -1 & 2  & -1 & -1 & 0  \\
0  & 0  & -1 & 2  & 0  & 0  \\
0  & 0  & -1 & 0  & 2  & -1 \\
0  & 0  & 0  & 0  & -1 & 2 
\end{pmatrix*}
$$

\subsubsection*{Inner Product Matrix}
As above, $B_{E_6} = A_{E_6}$.

\subsubsection*{Fundamental Weights}
We use notation similar to the notation used above, $(a_1,\ldots,a_6)\coloneqq a_1\alpha_1'+\cdots+a_6\alpha_6'$:
\begin{align*}
\lambda'_1 &= \frac{1}{3}(4,5,6,3,4,2),&\lambda'_2 &= \frac{1}{3}(5,10,12,6,8,4),&\lambda'_3 &= (2,4,6,3,4,2)\\
\lambda'_4 &= (1,2,3,2,2,1),&\lambda'_5 &= \frac{1}{3}(4,8,12,6,10,5),&\lambda'_6 &= \frac{1}{3}(2,4,6,3,5,4).
\end{align*}

\subsubsection*{Diagram Automorphism}
\begin{center}
    \dynkin[text style/.style={scale=1.2}, involutions={16;251}, label directions={above,above,above right,,above,above}, label, ]E6
\end{center}
Looking at the $E_6$ Dynkin diagram, there is a clear order 2 symmetry. This gives a diagram automorphism, $\tau,$ which corresponds to an outer automorphism of the Lie algebra. It is defined as follows:
\begin{align*}
\tau(\alpha'_1) &= \alpha'_6,&\tau(\alpha'_2) &= \alpha'_5,&\tau(\alpha'_3) &= \alpha'_3,\\
\tau(\alpha'_4) &= \alpha'_4,&\tau(\alpha'_5) &= \alpha'_2,&\tau(\alpha'_6) &= \alpha'_1.
\end{align*}

\subsubsection{Type $F_4$}
We denote our simple roots by $\beta_1,\ldots,\beta_4.$
\subsubsection*{Cartan Matrix}
$$A_{F_4} = 
\begin{pmatrix*}[r]
2  & -1 & 0  & 0  \\
-1 & 2  & -1 & 0  \\
0  & -2 & 2  & -1 \\
0  & 0  & -1 & 2 
\end{pmatrix*}
$$

\subsubsection*{Inner Product Matrix}
Since $|\beta_1|^2 = |\beta_2|^2 = 2$ and $|\beta_3|^2 = |\beta_4|^2 = 1$, we will multiply rows 1 and 2 of $A_{F_4}$ by 1 and rows 3 and 4 by $\dfrac{1}{2}$ to obtain our inner product matrix:
$$B_{F_4} = 
\begin{pmatrix*}[r]
2  & -1 & 0    & 0    \\
-1 & 2  & -1   & 0    \\
0  & -1 & 1    & -\frac{1}{2} \\
0  & 0  & -\frac{1}{2} & 1 
\end{pmatrix*}
$$

\subsubsection*{Fundamental Weights}
We use the notation $(b_1,\ldots,b_4)\coloneqq b_1\beta_1+\cdots+b_4\beta_4$:
\begin{align*}
\omega_1 &= (2,3,4,2),&\omega_2 &= (3,6,8,4),&\omega_3 &= (2,4,6,3),&\omega_4 = (1,2,3,2).
\end{align*}
Note that $P_{F_4} = Q_{F_4}$.

\subsubsection{Type $D_4$}
We denote our simple roots by $\gamma_1,\ldots,\gamma_4.$
\subsubsection*{Cartan Matrix}
$$A_{D_4} = 
\begin{pmatrix*}[r]
2  & -1 & 0  & 0  \\
-1 & 2  & -1 & -1 \\
0  & -1 & 2  & 0  \\
0  & -1 & 0  & 2
\end{pmatrix*}
$$

\subsubsection*{Inner Product Matrix}
As with $E_8$ and $E_6$, $B_{D_4} = A_{D_4}.$

\subsubsection*{Fundamental Weights}
We use the notation $(b_1,\ldots,b_4)\coloneqq b_1\gamma_1+\cdots+b_4\gamma_4$:
\begin{align*}
\psi_1 &= \frac{1}{2}(2,2,1,1),&\psi_2 &= (1,2,1,1),&\psi_3 &= \frac{1}{2}(1,2,2,1),&\psi_4 &= \frac{1}{2}(1,2,1,2).
\end{align*}

\subsubsection*{Diagram Automorphism}
\begin{center}
    \dynkin[text style/.style={scale=1.2}, involutions={14;43;31}, label directions={left,right,,}, label, ]D4
\end{center}
Looking at the $D_4$ Dynkin diagram, there are clearly two order 3 symmetries. There are also three order 2 symmetries, each coming from swapping two of nodes 1, 3 or 4. We will only need one order 3 diagram automorphism, $\sigma$, and its corresponding outer automorphism of the Lie algebra. It is defined as follows:
\begin{align*}
\sigma(\gamma_1) &= \gamma_3,&\sigma(\gamma_2) &= \gamma_2,&\sigma(\gamma_3) &= \gamma_4,&\sigma(\gamma_4) &= \gamma_1.
\end{align*}

\subsubsection{Type $G_2$}
We denote our simple roots by $\zeta_1,\zeta_2.$
\subsubsection*{Cartan Matrix}
$$A_{G_2} = 
\begin{pmatrix*}[r]
2  & -1 \\
-3 & 2
\end{pmatrix*}
$$

\subsubsection*{Inner Product Matrix}
Since $|\zeta_1|^2  = 2$ and $|\zeta_2|^2 = 2/3$, we will multiply row 1 of $A_{G_2}$ by 1 and row 2 by $\dfrac{1}{3}$ to obtain our inner product matrix:
$$B_{G_2} = 
\begin{pmatrix*}[r]
2  & -1  \\
-1 & \frac{2}{3}
\end{pmatrix*}
$$

\subsubsection*{Fundamental Weights}
We use the notation $(c_1,c_2)\coloneqq c_1\zeta_1+c_2\zeta_2$:
\begin{align*}
\xi_1 &= (2,3),&\xi_2 &= (1,2).
\end{align*}
Note that $P_{G_2} = Q_{G_2}$.

\subsubsection{Projections of $\mathfrak{h}^*$}
Now suppose we have five Lie algebras: $\mathfrak{g}$ of type $E_8$, $\mathfrak{g'}$ of type $E_6$, $\mathfrak{k}$ of type $D_4$, $\mathfrak{a}$ of type $F_4$ and $\mathfrak{b}$ of type $G_2$. Recall that the dual space of a CSA is the complex span of the respective algebra's simple roots. As mentioned earlier, we will work with the real spans of the simple roots, but our results immediately extend to the full dual spaces via complex linear combinations. We will denote these dual spaces by $\mathfrak{h}^*$ with an appropriate subscript.

\subsubsection*{$\dcart{E_6}\subset\dcart{E_8}$}
Consider the subspace $\mathfrak{h}^*_1\coloneqq\R\{\alpha_3,\ldots,\alpha_8\}<\dcart{\frak g}$. Since the invariant bilinear form on $\mathfrak{h}^*_1$ is positive-definite, we can use Gram-Schmidt to get an orthogonal basis which may give rational linear combinations of roots. In this case, we will multiply each vector by the least common multiple of the denominators in the rational coefficients of the simple roots so that our basis remains in the root lattice. From our orthogonal basis we then get an orthogonal projection $P_1\colon \dcart{\frak g}\rightarrow\dcart{\frak g}$ given by the matrix
$$\begin{pmatrix}
 0 & 0 & 0 & 0 & 0 & 0 & 0 & 0 \\
 0 & 0 & 0 & 0 & 0 & 0 & 0 & 0 \\
 0 & -\frac{4}{3} & 1 & 0 & 0 & 0 & 0 & 0 \\
 0 & -\frac{5}{3} & 0 & 1 & 0 & 0 & 0 & 0 \\
 0 & -2 & 0 & 0 & 1 & 0 & 0 & 0 \\
 0 & -1 & 0 & 0 & 0 & 1 & 0 & 0 \\
 0 & -\frac{4}{3} & 0 & 0 & 0 & 0 & 1 & 0 \\
 0 & -\frac{2}{3} & 0 & 0 & 0 & 0 & 0 & 1 
\end{pmatrix}.$$
Under the projection $P_1$, we have: $\alpha_1\mapsto 0$, $\alpha_2\mapsto -\frac{1}{3}(4\alpha_3+5\alpha_4+6\alpha_5+3\alpha_6+4\alpha_7+2\alpha_8)$ and $\alpha_i\mapsto\alpha_i$ for $i = 3,\ldots, 8$ and it follows that $\im(P_1) = \lah^*_1$.
Notice that the map $p_1(\alpha_i) = \alpha'_{i-2}$ for $i = 3,\ldots 8$ induces a vector space isomorphism between $\mathfrak{h}^*_1$ and $\dcart{\frak g'}$. Further, the invariant bilinear form is preserved by $p_1$ and so we will identify $\mathfrak{h}^*_1$ with $\dcart{\frak g'}$.

\subsubsection*{$\dcart{F_4}\subset\dcart{E_6}$}
Recall the outer automorphism $\tau$ from above and consider the subspace $$\mathfrak{h}^*_T\coloneqq\R\left\{\alpha'_4, \alpha'_3, \frac{1}{2}(\alpha'_2+\alpha'_5), \frac{1}{2}(\alpha'_1+\alpha'_6)\right\}$$ of $\dcart{\frak g'}$. We can define an orthogonal projection $T\colon \dcart{\frak g'}\rightarrow\dcart{\frak g'}$ by $T(\alpha) = \frac{1}{2}(\tau(\alpha)+\tau^2(\alpha))$. Under $T$ we have: $\alpha'_1,\alpha'_6\mapsto\frac{1}{2}(\alpha'_1+\alpha'_6)$, $\alpha'_2,\alpha'_5\mapsto\frac{1}{2}(\alpha'_2+\alpha'_5)$, $\alpha'_3\mapsto\alpha'_3$ and $\alpha'_4\mapsto\alpha'_4$ and it follows that $\im(T) = \lah^*_T$. Notice that the map $t$ defined by $t(\frac{1}{2}(\alpha'_1+\alpha'_6)) = \beta_4$, $t(\frac{1}{2}(\alpha'_2+\alpha'_5)) = \beta_3$, $t(\alpha'_4) = \beta_2$ and $t(\alpha'_3) = \beta_1$ induces a vector space isomorphism between $\mathfrak{h}^*_T$ and $\dcart{\frak a}$. Further, the invariant bilinear form is preserved by $t$ and so we will identify $\mathfrak{h}^*_T$ with $\dcart{\frak a}$.

\subsubsection*{$\dcart{D_4}\subset\dcart{E_8}$}
Consider the subspace $\mathfrak{h}^*_2\coloneqq\R\{\alpha_2+2\alpha_3+3\alpha_4+4\alpha_5+2\alpha_6+3\alpha_7+2\alpha_8,\alpha_1,\alpha_2,\alpha_2+2\alpha_3+2\alpha_4+2\alpha_5+\alpha_6+\alpha_7\}<\dcart{\frak g}$. Since the invariant bilinear form on $\mathfrak{h}^*_2$ is positive-definite, we can use Gram-Schmidt as we did above to get an orthogonal basis and projection $P_2\colon \dcart{\frak g}\rightarrow\dcart{\frak g}$ given by the matrix
$$\begin{pmatrix}
 1 & 0 & 0 & 0 & 0 & 0 & 0 & 0 \\
 0 & 1 & 0 & 0 & 0 & 0 & 0 & 0 \\
 0 & 0 & 1 & 0 & 0 & 0 & 0 & 0 \\
 0 & 0 & 1 & 0 & 0 & 0 & 0 & 0.5 \\
 0 & 0 & 1 & 0 & 0 & 0 & 0 & 1 \\
 0 & 0 & 0.5 & 0 & 0 & 0 & 0 & 0.5 \\
 0 & 0 & 0.5 & 0 & 0 & 0 & 0 & 1 \\
 0 & 0 & 0 & 0 & 0 & 0 & 0 & 1 
\end{pmatrix}$$
Under the projection $P_2$, we have: $\alpha_1\mapsto \alpha_1$, $\alpha_2\mapsto\alpha_2$, $\alpha_3\mapsto\frac{1}{2}(2\alpha_3+2\alpha_4+2\alpha_5+\alpha_6+\alpha_7)$, $\alpha_8\mapsto\frac{1}{2}(\alpha_4+2\alpha_5+\alpha_6+2\alpha_7+2\alpha_8)$ and $\alpha_i\mapsto 0$ for $i = 4,5,6,7$. It will follow that $\im(P_2) = \lah^*_2.$
Notice that the map $p_2$ given by $p_2(\alpha_2+2\alpha_3+3\alpha_4+4\alpha_5+2\alpha_6+3\alpha_7+2\alpha_8) = \gamma_1$, $p_2(\alpha_1) = \gamma_2$, $p_2(\alpha_2) = \gamma_3$ and $p_2(\alpha_2+2\alpha_3+2\alpha_4+2\alpha_5+\alpha_6+\alpha_7) = \gamma_4$ induces a vector space isomorphism between $\mathfrak{h}^*_2$ and $\dcart{\frak k}$. Further, the invariant bilinear form is preserved by $p_2$ and so we will identify $\mathfrak{h}^*_2$ with $\dcart{\frak k}$.

\subsubsection*{$\dcart{G_2}\subset\dcart{D_4}$}

Recall the outer automorphism $\sigma$ from above and consider the subspace $$\mathfrak{h}^*_S\coloneqq\R\left\{\gamma_2, \frac{1}{3}(\gamma_1+\gamma_3+\gamma_4)\right\}$$ of $\dcart{\frak k}$. We can define an orthogonal projection $S\colon \dcart{\frak k}\rightarrow\dcart{\frak k}$ by $S(\gamma) = \frac{1}{3}(\sigma(\gamma)+\sigma^2(\gamma)+\sigma^3(\gamma))$. Under $S$ we have: $\gamma_1,\gamma_3,\gamma_4\mapsto\frac{1}{3}(\gamma_1+\gamma_3+\gamma_4)$ and $\gamma_2\mapsto\gamma_2$. It follows that $\im(S) = \lah^*_S$. Notice that the map $s$ defined by $s(\gamma_2) = \zeta_1$ and $s(\frac{1}{3}(\gamma_1+\gamma_3+\gamma_4)) = \zeta_2$ induces a vector space isomorphism between $\mathfrak{h}^*_S$ and $\dcart{\frak b}$. Further, the invariant bilinear form is preserved by $s$ and so we will identify $\mathfrak{h}^*_S$ with $\dcart{\frak b}$.

\subsubsection*{$\dcart{F_4}\oplus\dcart{G_2}\subset\dcart{E_8}$}
We make a final note that although $\lah^*_S,\lah^*_T\subset\lah^*$ (under the above identifications), we must make sure that $\lah^*_S\cap\lah^*_T = \{0\}$ in order to get a direct sum. Under the identification of our various isomorphisms, we identify $\zeta_1$ with $\alpha_1$ and $\zeta_2$ with $\lambda_1-2\alpha_1$. Immediately, $P_1(\alpha_1) = 0$ and $P_1(\lambda_1-2\alpha_1) = 0$, which gives us what we wanted.

\subsection{Affine Kac-Moody Lie Algebras}
Our results can be extended to all of the corresponding affine algebras. The dual Cartan of each will now have a dimension 2 greater than that of the corresponding simple Lie algebra. This is due to the addition of the basic imaginary root, $\delta,$ and the basic  fundamental weight, $\Lambda_0$. We will always represent the basic fundamental weight with a subscript 0, but we will change the Greek letter depending on the algebra with which we are working (i.e. $\Lambda_0$ for $E_8^{(1)}$, $\Lambda_0'$ for $E_6^{(1)}$, $\Psi_0$ for $D_4^{(1)}$, $\Omega_0$ for $F_4^{(1)}$ and $\Xi_0$ for $G_2^{(1)}$). We will also use a subscript on $\delta$ to denote the algebra with which we are working. Finally, we will denote affine algebras by adding a hat to their corresponding finite dimensional simple algebras.

\subsubsection{$\widehat{P_1}$ and $\widehat{P_2}$}
Let $\widehat{\dcart{\frak g}}\coloneqq\mathbb{C}\{\Lambda_0,\delta,\alpha_1,\ldots,\alpha_8\}$ and $\widehat{\mathfrak{h}^*_1}\coloneqq\mathbb{C}\{\Lambda_0,\delta,\alpha_3,\ldots,\alpha_8\}$. Then $P_1$ can be extended to a projection $\widehat{P_1}\colon\widehat{\dcart{\frak g}}\rightarrow\widehat{\dcart{1}}$ in the following way: $\widehat{P_1}\restrict{\dcart{\frak g}} = P_1$, $\widehat{P_1}(\Lambda_0) = \Lambda_0$ and $\widehat{P_1}(\delta) = \delta$. Similarly, $\widehat{p_1}$ defined such that $\widehat{p_1}\restrict{\dcart{1}} = p_1$, $\widehat{p_1}(\Lambda_0) = \Lambda'_0$ and $\widehat{p_1}(\delta) = \delta_{\frak g'}$ gives an isomorphism between $\widehat{\dcart{1}}$ and $\widehat{\dcart{\frak g'}}$ where the latter is defined as $\C\{\Lambda'_0,\delta_{\frak g'},\alpha'_1,\ldots,\alpha'_6\}$.

Similarly, we will extend $P_2$ to the projection $\widehat{P_2}\colon\widehat{\dcart{g}}\rightarrow\widehat{\dcart{2}}$ by defining it to fix $\Lambda_0$ and $\delta$ and restricting to $P_2$ on $\dcart{\frak g}$. We will also extend $p_2$ to an isomorphism $\widehat{p_2}\colon\widehat{\dcart{2}}\rightarrow\widehat{\dcart{\frak k}}$ by restricting to $p_2$ on $\dcart{2}$ and demanding that $\widehat{p_2}(\delta) = \delta_{\frak k}$ and $\widehat{p_2}(\Lambda_0) = \Psi_0$.

\subsubsection{$\widehat{T}$ and $\widehat{S}$}
Notice that we can extend $\tau$ and $\sigma$ to diagram automorphisms of the $E_6^{(1)}$ and $D_4^{(1)}$ Dynkin diagrams by respectively fixing $\alpha_0'$ and $\gamma_0$. Define: $$\widehat{\dcart{T}}\coloneqq\C\{\Lambda'_0, \delta_{\frak g'}, \alpha'_4, \alpha'_3, \frac{1}{2}(\alpha'_2+\alpha'_5), \frac{1}{2}(\alpha'_1+\alpha'_6)\}$$ and $$\widehat{\dcart{S}}\coloneqq\{\Psi_0,\delta_{\frak k},\gamma_2, \frac{1}{3}(\gamma_1+\gamma_3+\gamma_4)\}.$$ We can then extend our projections to get:
$\widehat{T}\colon\widehat{\dcart{\frak g'}}\rightarrow\widehat{\dcart{T}}$ with $\widehat{T}\restrict{\dcart{\frak g'}} = T$, $\widehat{T}(\Lambda'_0) = \Lambda'_0$ and $\widehat{T}(\delta_{\frak g'}) = \delta_{\frak g'}$ and $\widehat{S}\colon\widehat{\dcart{\frak k}}\rightarrow\widehat{\dcart{S}}$ with $\widehat{S}\restrict{\dcart{\frak k}} = S$, $\widehat{S}(\Psi_0) = \Psi_0$ and $\widehat{S}(\delta_{\frak k}) = \delta_{\frak k}$.
We can further extend the isomorphisms $t$ and $s$ to $\widehat{t}\colon\widehat{\dcart{T}}\rightarrow\widehat{\dcart{\frak a}}$ and $\widehat{s}\colon\widehat{\dcart{S}}\rightarrow\widehat{\dcart{\frak b}}$ by restricting to $t$ and $s$ on $\dcart{T}$ and $\dcart{S}$ respectively and by sending basic fundamental weight to basic fundamental weight ($\Lambda'_0\mapsto\Omega_0$ and $\Psi_0\mapsto\Xi_0$ respectively) and basic imaginary root to basic imaginary root ($\delta_{\frak g'}\mapsto\delta_{\frak a}$ and $\delta_{\frak k}\mapsto\delta_{\frak b}$ respectively).

\subsubsection{The Basic Fundamental Weight and Imaginary Root}
We must note that hidden in our projections of $\Lambda_0$, $\Lambda'_0$ and $\Psi_0$ is that although each appears to be projected to itself as a vector, we must restrict its domain as a linear functional to the corresponding Cartan subalgebra. For example, $\widehat{P_1}(\Lambda_0) = \Lambda_0\restrict{\widehat{\mathfrak{h}}_{\frak g'}}$ and $\widehat{P_2}(\Lambda_0) = \Lambda_0\restrict{\widehat{\mathfrak{h}}_{\frak k}}$. This also holds analogously for $\delta$.

\chapter{Character Theory}
In this chapter we dive more deeply into the techniques of character calculations for affine Kac-Moody Lie algebras. We will also use some numerical data to begin to get a handle on our decomposition.

\section{Specializations}
In this dissertation, we use specialization as a tool for making character calculations simpler. A $(s_0,s_1,\ldots,s_\ell)$ \textbf{-specialization} of $\ch(V^\Lambda)$ where $V^\Lambda$ is a highest weight module of some affine algebra with simple roots $\alpha_i$, for $i = 0,\ldots,\ell$, is given by the substitution $e^{-\alpha_i} = q^{s_i}$. One important specialization is the \textbf{principal specialization}, that is the $(1,1,\ldots,1)$-specialization. This specialization is often used because a theorem of Lepowsky \cite{Lepowsky} allows it to be calculated as a quotient of two infinite products.

In Section 1.3 we identified a direct sum of isomorphic copies of dual CSAs of type $F_4$ and $G_2$ inside of the $E_8$ dual CSA. This allowed us to identify the roots and weights of $F_4$ and $G_2$ with elements of the $E_8$ dual CSA. When applying a specialization in a branching rule, we need to be mindful of these identifications and their implications on how choice of specialization for our $E_8^{(1)}$ highest weight module will affect the specialization of any $F_4^{(1)}$ and $G_2^{(1)}$ submodules.

For example, we will notice that under our isomorphisms, we identify the short simple root $\zeta_2$ of $G_2$ with $\frac{1}{3}(\lambda_1-2\alpha_1)$. Under the principal specialization, $e^{-\lambda_1} = q^{29}$ and $e^{-\alpha_1} = q$ and so $e^{-\zeta_2} = e^{-\frac{1}{3}(\lambda_1-2\alpha_1)} = q^{9}$. So we would be forced to use a different specialization on our $G_2^{(1)}$ modules. In fact, applying the principal specialization in our case would give $e^{-\alpha_i} = q$ for $i = 0,\ldots,8$, $e^{-\beta_i} = q$ for $i = 0,1,2,3,4$ and $e^{-\zeta_i} = q$ for $i = 0,1$. And so we only have issues with this one simple root.

Rather than trying to mix specializations, we choose instead to use the \textbf{horizontal specialization}, which is the $(1,0,\ldots,0)$ specialization, and which we will denote by the subscript \textbf{hs}. Under this specialization, $e^{-\alpha} = 1$ for any $\alpha$ from the root lattice of the underlying finite dimensional, simple algebra. Further, since $\delta = \alpha_0+\theta$ and $\theta$ is in the root lattice of the underlying simple algebra, $e^{-\delta} = q$ under this specialization. The graded dimension of many affine highest weight modules have been calculated up to a finite number of terms in \cite{KMPS} using this specialization. We will use these approximations to get an idea of how our level 1 $F_4^{(1)}$ and $G_2^{(1)}$ modules sit inside of the basic module $V^{\Lambda_0}$. And although we cannot make use of Lepowsky's beautiful theorem, we will use an equally beautiful formula of Kac and Peterson \cite{KacPeterson} which will make use of theta functions.

\section{The Kac-Peterson Theta Function Formula}
Throughout this section, $\Lambda\in\widehat{P}^+$ will be of level $k$. We will consider the character formula $$\ch(V^\Lambda) = \sum_{\mu\in\Pi^\Lambda}\dim(V_\mu^\Lambda)e^\mu.$$ Define the set of maximal weights of $V^\Lambda$ by $\max(\Lambda)\coloneqq\{\lambda\in\Pi^\Lambda\mid\lambda+\delta\not\in\Pi^\Lambda\}$. We now state some properties of weights of affine modules.

\begin{proposition}
For $\Lambda\in\widehat{P}^+$, we have:
\begin{enumerate}[(1)]
    \item $\widehat{\cal W}(\max(\Lambda)) = \max(\Lambda)$,
    \item for $\lambda\in\Pi^\Lambda$, we have $\widehat{\cal W}_\lambda\cap t(M^*) = 1$ where $\widehat{\cal W}_\lambda$ is the stabilizer of $\lambda$ in $\widehat{\cal W}$,
    \item for each $\mu\in\Pi^\Lambda$, there exists a unique $\lambda\in\max(\Lambda)$ and $0\leq n\in\Z$ such that $\mu = \lambda-n\delta$,
    \item for $\lambda\in\Pi^\Lambda$, the set of all $k\in\Z$ such that $\lambda-k\delta\in\Pi^\Lambda$, is of the form $\{k\in\Z\mid k\geq -p\}$ with $p\geq 0$, and the map $k\mapsto\dim(V^\Lambda_{\lambda-k\delta})$ is nondecreasing on this set.
\end{enumerate}
For proofs and more information on these, see Chapter 12 of \cite{kac}. One immediate consequence is that $\Pi^\Lambda = \displaystyle\bigsqcup_{\lambda\in\max(\Lambda)}\{\lambda-n\delta\mid 0\leq n\in\Z\}$.  Define the generating series $\displaystyle a_\lambda^\Lambda = \sum_{n = 0}^\infty\dim(V^\Lambda_{\lambda-n\delta})e^{-n\delta}$. Then by the properties we just stated, we get: $$\ch(V^\Lambda) = \sum_{\lambda\in\max(\Lambda)}e^\lambda a^\Lambda_\lambda = \sum_{\substack{\lambda\in\max(\Lambda) \\ \lambda\mod t(M^*)}}\left(\sum_{t_\alpha\in t(M^*)} e^{t_\alpha(\lambda)}\right)a_\lambda^\Lambda.$$
For $\lambda\in\max(\Lambda)$ of level $k$, set $\displaystyle\Theta_\lambda\coloneqq e^{-\frac{|\lambda|^2}{2k}\delta}\sum_{t_\alpha\in t(M^*)}e^{t_\alpha(\lambda)}.$ Then for any $t_\alpha\in t(M^*)$, we have $$t_\alpha(\lambda) = t_\alpha(k\Lambda_0+\overline{\lambda}) = k\Lambda_0+\overline{\lambda}+k\alpha-\left((\overline{\lambda},\alpha)+\frac{1}{2}|\alpha|^2k\right)\delta$$ where $\overline{\lambda} = \lambda-k\Lambda_0$ is an element of the finite dimensional weight lattice, $P$. Let $\gamma = \alpha+\frac{1}{k}\overline{\lambda}$. Notice that $$-\frac{1}{2}k|\gamma|^2\delta = -\frac{1}{2}k\left(|\alpha|^2+\frac{2}{k}(\overline{\lambda},\alpha)+\frac{1}{k^2}|\overline{\lambda}|^2\right)\delta = -\left(\frac{1}{2}k|\alpha|^2+(\overline{\lambda},\alpha)+\frac{1}{2k}|\overline{\lambda}|^2\right)\delta.$$ So $$t_\alpha(\lambda) = k\Lambda_0+k\gamma-\frac{1}{2}k|\gamma|^2\delta+\frac{|\overline{\lambda}|^2}{2k}\delta.$$ This gives us that $$\Theta_{\lambda} = e^{k\Lambda_0}\sum_{\gamma\in M^*+k^{-1}\overline{\lambda}}e^{-\frac{1}{2}k|\gamma|^2\delta+k\gamma}.$$ This shows that $\Theta_\lambda$ only depends on $\lambda\mod (kM^*+\C\delta).$ For $\Lambda,\lambda\in H^*$ we define two rational quantities: $s_\Lambda\coloneqq\dfrac{|\Lambda+\widehat{\rho}|^2}{2(k+h^\vee)}$ (where $h^\vee = \widehat{\rho}(c)$ is the \textbf{dual Coxeter number}) and $s_\Lambda(\lambda)\coloneqq s_\Lambda-\dfrac{|\lambda|^2}{2k}$. Now for any $\lambda\in\widehat{P}$, we define the series $$\displaystyle c_\lambda^\Lambda\coloneqq e^{-s_\Lambda(\lambda)\delta}\sum_{n\in\Z}\dim(V^\Lambda_{\lambda-n\delta}) e^{-n\delta}.$$ Notice that if $\lambda\in\max(\Lambda)$ then $c^\Lambda_\lambda = e^{-s_\Lambda(\lambda)\delta}a^\Lambda_\lambda$. Kac and Peterson showed that if $\mu\in\widehat{P}$ and $\lambda\in\max(\Lambda)$ such that $\mu = \lambda-n\delta$ for some $n\in\Z$ then $c_\mu^\Lambda = c_\lambda^\Lambda$. They also showed that $c^\Lambda_{w(\lambda)} = c^\Lambda_\lambda$ for all $w\in\widehat{\cal W}$. Because $\widehat{\cal W} = \mathcal{W}\ltimes t(M^*)$, this means that $c^\Lambda_{w'(\lambda)+k\gamma+n\delta} = c^\Lambda_\lambda$ for all $w'\in\mathcal{W}$, $\gamma\in M^*$ and $n\in\C$ (see page 169 of \cite{KacPeterson}). We can now rewrite our character formula as follows:
$$\ch(V^\Lambda) = q^{-s_\Lambda}\sum_{\substack{\lambda\in\widehat{P}\mod (kM^*+\C\delta) \\ \lambda(c) = k}}c^\Lambda_\lambda\Theta_\lambda$$ where $\widehat{P}\mod (kM^*+\C\delta)$ is the set of coset representatives of $\widehat{P}/(kM^*+\C\delta)$. This formula will allow us to perform our character calculation using the theta functions which will be introduced in the next chapter.
\end{proposition}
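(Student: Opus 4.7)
The plan is to transform the defining character sum $\ch(V^\Lambda) = \sum_{\mu\in\Pi^\Lambda}\dim(V_\mu^\Lambda)e^\mu$ step by step by factoring out progressively larger pieces of $\widehat{\mathcal{W}}$. Each step produces one of the three ingredients on the right-hand side: the string function $c_\lambda^\Lambda$ comes from collapsing the $\Z\delta$-direction, the theta function $\Theta_\lambda$ comes from collapsing the translation subgroup $t(M^*)$, and the uniform prefactor $q^{-s_\Lambda}$ comes from matching the two normalizations.

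First I would use property~(3) to re-index by maximal weights: writing $\Pi^\Lambda$ as the disjoint union of $\delta$-columns $\{\lambda-n\delta\mid n\geq 0\}$ indexed by $\lambda\in\max(\Lambda)$, each column contributes $e^\lambda a_\lambda^\Lambda$, so that $\ch(V^\Lambda) = \sum_{\lambda\in\max(\Lambda)} e^\lambda a_\lambda^\Lambda$. Next, properties~(1) and~(2), together with the observation that $a_\lambda^\Lambda$ is $t(M^*)$-invariant (since each $t_\alpha$ commutes with $\delta$-shifts and weight multiplicities are $\widehat{\mathcal{W}}$-invariant), let me group the maximal weights into $t(M^*)$-orbits, yielding $\ch(V^\Lambda) = \sum_{\lambda\,\bmod\,t(M^*)}\bigl(\sum_{t_\alpha\in t(M^*)} e^{t_\alpha(\lambda)}\bigr)a_\lambda^\Lambda$.

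The inner sum becomes an honest lattice theta series once I insert the right normalization. Setting $\Theta_\lambda = e^{-|\lambda|^2\delta/(2k)}\sum_{t_\alpha\in t(M^*)} e^{t_\alpha(\lambda)}$ and completing the square with $\gamma = \alpha + k^{-1}\bar\lambda$ turns the exponent into $-\tfrac{1}{2}k|\gamma|^2\delta + k\gamma$ (plus the base point $k\Lambda_0$), which visibly depends only on the class of $\lambda$ in $\widehat{P}/(kM^*+\C\delta)$. Analogously I would define $c_\lambda^\Lambda = e^{-s_\Lambda(\lambda)\delta}a_\lambda^\Lambda$. The arithmetic identity $s_\Lambda(\lambda) + |\lambda|^2/(2k) = s_\Lambda$ is exactly what makes the two normalizing $\delta$-exponents combine into a single $q^{-s_\Lambda}$, independent of the summation variable, giving $\sum_{t_\alpha}e^{t_\alpha(\lambda)}a_\lambda^\Lambda = q^{-s_\Lambda}\Theta_\lambda c_\lambda^\Lambda$.

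The last step is to upgrade the sum over $\max(\Lambda)\,\bmod\,t(M^*)$ to a sum over all coset representatives of $\widehat{P}/(kM^*+\C\delta)$ at level $k$; this is legitimate thanks to the Kac-Peterson invariance $c^\Lambda_{w(\lambda)+k\gamma+n\delta} = c^\Lambda_\lambda$, which makes $c_\lambda^\Lambda$ a well-defined function on cosets that vanishes outside the image of $\max(\Lambda)$. The main obstacle I anticipate is the quadratic-form bookkeeping in the preceding paragraph: verifying that the $\delta$-exponent in $\Theta_\lambda$ and the shift in $c_\lambda^\Lambda$ interlock so that $\Theta_\lambda$ really is well-defined on $\widehat{P}/(kM^*+\C\delta)$ and the $q^{-s_\Lambda}$ factors uniformly out of every term. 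Once those normalization identities are verified, the formula assembles itself.
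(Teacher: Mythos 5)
Your proposal reproduces the paper's derivation essentially verbatim: re-indexing $\Pi^\Lambda$ by $\delta$-strings over maximal weights, grouping those into free $t(M^*)$-orbits via properties (1)--(3), completing the square with $\gamma=\alpha+k^{-1}\overline{\lambda}$ to produce $\Theta_\lambda$, and using the identity $s_\Lambda(\lambda)+|\lambda|^2/(2k)=s_\Lambda$ together with the Kac--Peterson invariance of $c^\Lambda_\lambda$ to extend the sum to all level-$k$ coset representatives of $\widehat{P}/(kM^*+\C\delta)$. The approach and all the key normalizations match the paper's, which likewise takes properties (1)--(4) on citation from Chapter 12 of Kac rather than proving them.
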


\section{Numerical Data}
There are two level one modules for the $F_4^{(1)}$ affine algebra and two for the $G_2^{(1)}$ affine algebra. In order to get a handle on how the $E_8^{(1)}$ basic module decomposes with respect to these, it is helpful to look at the first few ``slices," that is the first few graded pieces with respect to our chosen specialization, of each module. As mentioned earlier, we will use the horizontal specialization. In \cite{KMPS}, we are able to get graded dimensions up to slice 7. These dimensions are listed in table \ref{slices} below. Also listed are the pages of \cite{KMPS} on which each module's graded dimensions are given.

\begin{table}[h]
\centering
\begin{tabular}{|c|ccccc|}
\hline
      & \multicolumn{5}{c|}{\textbf{Dimension}}                                                                                               \\ \hline
\textbf{Slice} & \multicolumn{1}{c|}{$V^{\Lambda_0}$}        & \multicolumn{1}{c|}{$V^{\Omega_0}$}      & \multicolumn{1}{c|}{$V^{\Omega_4}$}      & \multicolumn{1}{c|}{$V^{\Xi_0}$}    & $V^{\Xi_2}$    \\ \hline
0     & \multicolumn{1}{c|}{1}        & \multicolumn{1}{c|}{1}      & \multicolumn{1}{c|}{26}     & \multicolumn{1}{c|}{1}    & 7    \\ \hline
1     & \multicolumn{1}{c|}{248}      & \multicolumn{1}{c|}{52}     & \multicolumn{1}{c|}{299}    & \multicolumn{1}{c|}{14}   & 34   \\ \hline
2     & \multicolumn{1}{c|}{4124}     & \multicolumn{1}{c|}{377}    & \multicolumn{1}{c|}{1702}   & \multicolumn{1}{c|}{42}   & 119  \\ \hline
3     & \multicolumn{1}{c|}{34752}    & \multicolumn{1}{c|}{1976}   & \multicolumn{1}{c|}{7475}   & \multicolumn{1}{c|}{140}  & 322  \\ \hline
4     & \multicolumn{1}{c|}{213126}   & \multicolumn{1}{c|}{7852}   & \multicolumn{1}{c|}{27300}  & \multicolumn{1}{c|}{350}  & 819  \\ \hline
5     & \multicolumn{1}{c|}{1057504}  & \multicolumn{1}{c|}{27404}  & \multicolumn{1}{c|}{88452}  & \multicolumn{1}{c|}{840}  & 1862 \\ \hline
6     & \multicolumn{1}{c|}{4530744}  & \multicolumn{1}{c|}{84981}  & \multicolumn{1}{c|}{260650} & \multicolumn{1}{c|}{1827} & 4025 \\ \hline
7     & \multicolumn{1}{c|}{17333248} & \multicolumn{1}{c|}{243230} & \multicolumn{1}{c|}{714727} & \multicolumn{1}{c|}{3858} & 8218 \\ \hline
\textbf{Page}  & \multicolumn{1}{c|}{856}      & \multicolumn{2}{c|}{868}                                  & \multicolumn{2}{c|}{879}         \\ \hline
\end{tabular}   
\caption{Graded dimensions for level-one $F_4^{(1)}$ and $G_2^{(1)}$ modules up to slice 7}
\label{slices}
\end{table}

So we have the following truncated graded dimensions:
\begin{align}
\notag
\gr(V^{\Lambda_0})_{\hs} = 1 &+248q+4124q^2+34752q^3+213126q^4+1057504q^5\\
&+4530744q^6+17333248q^7+\cdots\\
\notag
\gr(V^{\Omega_0})_{\hs} = 1 &+52q+377q^2+1976q^3+7852q^4+27404q^5\\
&+84981q^6+243230q^7+\cdots\\
\notag
\gr(V^{\Omega_4})_{\hs} = 26 &+299q+1702q^2+7475q^3+27300q^4+88452q^5\\
&+260650q^6+714727q^7+\cdots\\
\notag
\gr(V^{\Xi_0})_{\hs} = 1 &+14q+42q^2+140q^3+350q^4+840q^5\\
&+1827q^6+3858q^7+\cdots\\
\notag
\gr(V^{\Xi_2})_{\hs} = 7 &+34q+119q^2+322q^3+819q^4+1862q^5\\
&+4025q^6+8218q^7+\cdots
\end{align}

Note that the only way to get a dimension of 1 on slice 0 is to tensor $V^{\Omega_0}\otimes V^{\Xi_0}$. If we tensor $V^{\Omega_4}\otimes V^{\Xi_2}$ we will get a dimension of $182$ on the initial slice, and so this module must live deeper down. We will check the graded dimensions to see how all of this fits together. Note that when we begin multiplying, adding or shifting these truncated graded dimensions, we always have to truncate our answer at $q^7$ as any additional terms will be inaccurate.

\begin{align}
\notag
    \gr(V^{\Omega_0})_{\hs}\gr(V^{\Xi_0})_{\hs} = 1 &+ 66 q + 1147 q^2 + 9578 q^3 + 58980 q^4 + 292144 q^5\\
    &+ 1252518 q^6 + 4790354 q^7+\cdots
\end{align}

\begin{align}
\notag
    \gr(V^{\Omega_4})_{\hs}\gr(V^{\Xi_2})_{\hs} = 182 &+ 2977 q + 25174 q^2 + 154146 q^3 + 765360 q^4\\
    &+ 3278226 q^5+  12542894 q^6 + 43889869 q^7+\cdots
\end{align} 

Shifting the second tensor product down one slice (i.e. multiplying equation 7 by $q$) will give us 

\begin{align}
\notag
    q\gr(V^{\Omega_4})_{\hs}\gr(V^{\Xi_2})_{\hs} = 182q &+ 2977 q^2 + 25174 q^3 + 154146 q^4 + 765360 q^5\\
    &+ 3278226 q^6+  12542894 q^7+\cdots
\end{align} 

and so adding equations 6 and 8 will give:

\begin{align}
    \notag
    \gr(V^{\Omega_4})_{\hs}\gr(V^{\Xi_2})_{\hs}&+q\gr(V^{\Omega_4})_{\hs}\gr(V^{\Xi_2})_{\hs}\\ 
    \notag
    &= 1 +248q+4124q^2+34752q^3+213126q^4+1057504q^5\\
    &+4530744q^6+17333248q^7+\cdots
\end{align}
but this is precisely $\gr(V^{\Lambda_0})_{\hs}$ up to slice 7. This is our first indication that $V^{\Lambda_0} = (V^{\Omega_0}\otimes V^{\Xi_0})\oplus (V^{\Omega_4}\otimes V^{\Xi_2})$ however we will need to prove this result for the entire graded dimensions of the modules in order to get our result.

\chapter{Theta Functions and Theta Function Identities}
In this chapter, we define the various theta functions that appear in our character calculation. We will also give some useful identities relating these theta functions. We will conclude with proofs of some nice identities needed to complete our character calculation. A good source of information about these functions, their history and many identities can be found in \cite{cooper}. In what follows, $\tau\in\C$ with $\im(\tau)>0$ and $q\coloneqq e^{2\pi i\tau}$. Also, let $\nu_k\coloneqq e^{2\pi i/k}$. The two most important functions we will consider are the \textbf{Euler function} $\varphi(q) = \prod_{n = 1}^\infty(1-q^n)$ and the \textbf{Dedekind $\eta$-function} $\eta(\tau) = q^{1/24}\varphi(q)$. We will often write $\varphi_n = \varphi(q^n)$ and $\eta_n = \eta(n\tau)$.

\section{Theta Functions and Classical Identities}
In this section we present the definitions of the Jacobi, Ramanujan and Borweins' cubic theta functions and give some classical identities from the literature. We begin with an important result of Jacobi, given in \cite{jacobi}. For a more modern proof, see Theorem 0.1 in \cite{cooper}.
\begin{proposition}[Jacobi Triple Product Identity]
$$\sum_{n = -\infty}^\infty q^{n^2}x^n = \prod_{n = 1}^\infty(1+xq^{2n-1})(1+x^{-1}q^{2n-1})(1-q^{2n}) = (-xq;q^2)_\infty(-q/x;q^2)_\infty(q^2;q^2)_\infty$$ where $(a;q)_\infty = \prod_{n = 0}^\infty(1-aq^n)$ is the $q$-Pochammer symbol.
\end{proposition}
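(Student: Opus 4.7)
The plan is to prove the identity by the classical functional-equation method, treating $q$ as a complex parameter with $|q|<1$ and viewing both sides as holomorphic functions of $x\in\mathbb{C}^*$. Set $f(x) := (-xq;q^2)_\infty(-q/x;q^2)_\infty(q^2;q^2)_\infty$. Since $|q|<1$, the product converges absolutely and uniformly on compact subsets of $\mathbb{C}^*$, so $f$ is holomorphic there and admits a Laurent expansion $f(x) = \sum_{n\in\mathbb{Z}} a_n(q)\, x^n$.

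First I would establish two symmetries of $f$ directly from the product form. Exchanging $x$ with $x^{-1}$ simply interchanges the first two factors, so $f(x)=f(x^{-1})$ and hence $a_{-n}=a_n$. Replacing $x$ with $xq^2$ shifts the index in $(-xq;q^2)_\infty$ up by one (removing the factor $1+xq$) and in $(-q/x;q^2)_\infty$ down by one (inserting the factor $1+x^{-1}q^{-1}$), so
$$f(xq^2) = f(x)\cdot\frac{1+x^{-1}q^{-1}}{1+xq} = x^{-1}q^{-1}\, f(x).$$
Matching coefficients of $x^n$ in this identity yields the recurrence $a_{n+1}(q) = q^{2n+1}\, a_n(q)$, which telescopes to $a_n(q) = q^{n^2} a_0(q)$ for every $n\in\mathbb{Z}$, giving
$$f(x) = a_0(q)\sum_{n\in\mathbb{Z}} q^{n^2}\, x^n.$$

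The remaining and hardest step is to show $a_0(q)=1$. I would compute the constant term in $x$ directly by expanding each of the two outer factors via Euler's $q$-binomial identity $(-y;Q)_\infty = \sum_{k\ge 0} Q^{\binom{k}{2}} y^k / (Q;Q)_k$ with $Q=q^2$ and $y=xq$ (resp.\ $y=q/x$):
$$(-xq;q^2)_\infty = \sum_{k\ge 0}\frac{q^{k^2}}{(q^2;q^2)_k}\, x^k,\qquad (-q/x;q^2)_\infty = \sum_{k\ge 0}\frac{q^{k^2}}{(q^2;q^2)_k}\, x^{-k}.$$
Multiplying these two series and collecting the constant term in $x$ produces
$$a_0(q) = (q^2;q^2)_\infty \sum_{k\ge 0} \frac{q^{2k^2}}{(q^2;q^2)_k^2}.$$
To close this off I would invoke the Durfee-square identity $\sum_{k\ge 0} q^{k^2}/(q;q)_k^2 = 1/(q;q)_\infty$, proved combinatorially by decomposing any partition around the largest square that fits in its Young diagram. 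Applying this with $q$ replaced by $q^2$ gives $\sum_{k\ge 0} q^{2k^2}/(q^2;q^2)_k^2 = 1/(q^2;q^2)_\infty$, hence $a_0(q)=1$ and the triple-product identity follows. The principal obstacle is genuinely this last step: the derivation $f(x) = a_0(q)\sum q^{n^2} x^n$ is essentially formal manipulation of the product, while pinning down the constant to be $1$ requires a substantive combinatorial input such as the Durfee-square decomposition (or, equivalently, an iteration of the identity itself under a cleverly chosen specialization of $x$).
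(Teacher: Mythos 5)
Your proof is correct and complete, but it is worth noting that the paper does not prove this proposition at all: it is stated as a classical result with references to Jacobi's original work and to Theorem 0.1 of Cooper's book, and everything downstream (Proposition \ref{JTPids}, the Ramanujan theta function product forms, etc.) simply uses it. What you supply is the standard functional-equation argument: the quasi-periodicity $f(xq^2)=x^{-1}q^{-1}f(x)$ forces $a_n=q^{n^2}a_0$ (your symmetry $a_{-n}=a_n$ is a pleasant check but not needed, since the recurrence $a_{n+1}=q^{2n+1}a_n$ already telescopes in both directions), and you correctly identify that the entire difficulty is concentrated in showing $a_0(q)=1$. Your resolution of that step -- expanding both outer factors by Euler's identity $(-y;Q)_\infty=\sum_k Q^{\binom{k}{2}}y^k/(Q;Q)_k$, extracting the constant term, and invoking the Durfee-square identity $\sum_k q^{k^2}/(q;q)_k^2 = 1/(q;q)_\infty$ -- is sound; the computation $Q^{\binom{k}{2}}(xq)^k = q^{k^2}x^k$ with $Q=q^2$ checks out, and the constant term is indeed $(q^2;q^2)_\infty\sum_k q^{2k^2}/(q^2;q^2)_k^2 = 1$. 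The trade-off relative to the paper's citation is that you import two other classical identities (Euler's $q$-exponential expansion and the Durfee-square decomposition) as black boxes; a fully self-contained alternative for the last step is the iteration argument showing $a_0(q)=a_0(q^4)$ by specializing $x$, whence $a_0(q)=\lim a_0(q^{4^n})=1$, which you mention parenthetically. Either way the argument is valid and would serve as a legitimate proof of the proposition the paper leaves to the literature.
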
 In terms of the $q$-Pochammer symbol, $\varphi_n = (q^n;q^n)_\infty$ and $\eta_n = q^{n/24}(q^n;q^n)_\infty$.

\subsection{Jacobi Theta Functions}
The four functions we will define are specializations of the function $$\vartheta(z;t) = \sum_{n = -\infty}^\infty\exp(\pi i n^2 t+2\pi i n z)$$ where $z,t\in\C$ with $\im(t)>0$.

\begin{definition}[Jacobi Theta Functions]
\leavevmode
\begin{enumerate}[(1)]
    \item $\displaystyle\theta_2(q)\coloneqq e^{\frac{\pi i\tau}{2}}\vartheta(\tau;2\tau) = \sum_{n = -\infty}^\infty q^{(n+1/2)^2}$
    \item $\displaystyle\theta_3(q)\coloneqq \vartheta(0;2\tau) = \sum_{n = -\infty}^\infty q^{n^2}$
    \item $\displaystyle\theta_4(q)\coloneqq \vartheta\left(\frac{1}{2};2\tau\right) = \sum_{n = -\infty}^\infty(-1)^nq^{n^2}$
    \item $\displaystyle\psi_k(q)\coloneqq e^{\frac{2\pi i\tau}{k^2}}\vartheta(2\tau/k;2\tau) = \sum_{n = -\infty}^\infty q^{(n+1/k)^2}$.
\end{enumerate}
\end{definition}

Note that $\psi_2(q) = \theta_2(q)$. The following identities are specializations of transformation properties of $\vartheta$ proven by Jacobi in \cite{jacobi}. A good resource for information on these properties and their proofs can found in pages 462-490 of \cite{whittaker}. Another good source of proofs for these identities is \cite{cooper}. It should be noted, however, that Cooper presents these identities in the equivalent language of the Ramanujan theta functions $\phi$ and $\psi$. See the remark after Definition \ref{RamThetaDef} below.

\begin{proposition}
\label{jacobiprops}
\leavevmode
\begin{enumerate}[(1)]
    \item $\theta_3(q)^4 = \theta_2(q)^4+\theta_4(q)^4$
    \item $\theta_3(q)^2 = \theta_3(q^2)^2+\theta_2(q^2)^2$
    \item $\theta_4(q)^2 = \theta_3(q^2)^2-\theta_2(q^2)^2$
    \item $\psi_3(q) = \dfrac{1}{2}(\theta_3(q^{1/9})-\theta_3(q))$
    \item $\psi_6(q) = \dfrac{1}{2}(\theta_2(q^{1/9})-\theta_2(q))$
\end{enumerate}
\end{proposition}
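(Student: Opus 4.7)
The plan is to prove (2) and (3) first by direct series manipulation, then derive (1) from them together with one auxiliary duplication formula, and finally prove (4) and (5) by splitting a single-variable theta series according to a residue class.

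For (2) and (3), I would expand $\theta_3(q)^2$ and $\theta_4(q)^2$ as double series $\sum_{m,n\in\Z}(\pm 1)^{m+n}q^{m^2+n^2}$ and make the change of variables $u = m+n$, $v = m-n$. This gives a bijection from $\Z^2$ onto the sublattice $\{(u,v)\mid u\equiv v\pmod 2\}$, with $m^2+n^2=(u^2+v^2)/2$. Splitting into the "both even" piece $(u,v)=(2a,2b)$ and "both odd" piece $(u,v)=(2a+1,2b+1)$, completing the square on the odd piece via $((2a+1)^2+(2b+1)^2)/2 = 2(a+1/2)^2+2(b+1/2)^2$, and separately summing over $a,b\in\Z$, produces $\theta_3(q^2)^2$ and $\theta_2(q^2)^2$ respectively. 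In (3) the sign $(-1)^{m+n}=(-1)^u$ is $+1$ on the even piece and $-1$ on the odd piece, which gives the minus sign.

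For (1), I would square (2) and (3) and subtract to get $\theta_3(q)^4-\theta_4(q)^4 = 4\theta_2(q^2)^2\theta_3(q^2)^2$; it then suffices to prove the duplication formula $\theta_2(q)^2 = 2\theta_2(q^2)\theta_3(q^2)$. This is obtained by the same trick: expand $\theta_2(q)^2 = \sum_{m,n}q^{(m+1/2)^2+(n+1/2)^2}$, apply $u=m+n$, $v=m-n$, and verify that both parity subcases of the resulting sum each contribute $\theta_2(q^2)\theta_3(q^2)$ after completing the square. For (4), write $\theta_3(q^{1/9})=\sum_{n\in\Z}q^{n^2/9}$ and split by $n\pmod 3$: the class $n=3m$ contributes $\theta_3(q)$, while the classes $n=3m+1$ and $n=3m-1$ each contribute $\psi_3(q)$ (using $m\mapsto -m$ in the second to match the sign of the shift), and rearranging yields the claimed formula. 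For (5), write $\theta_2(q^{1/9}) = \sum_{m\text{ odd}}q^{m^2/36}$ and split odd $m$ modulo $6$: the class $m\equiv 3$ gives $\theta_2(q)$ (since $(6k+3)^2/36 = (k+1/2)^2$), while $m\equiv 1$ and $m\equiv 5$ each give $\psi_6(q)$ via $(k+5/6)^2=(-k-5/6)^2=(k+1/6)^2$ after shifting the summation variable.

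None of these steps is conceptually deep; each identity reduces to a finite, two- or three-class partition of a lattice sum plus completing the square. The only place requiring any care is in (4) and (5), where one must track which branches of the $\bmod\ 3$ or $\bmod\ 6$ splitting produce $\psi_3$ or $\psi_6$ and verify that the "negative" branch folds onto the "positive" one under $n\mapsto -n$. Everything else is routine series manipulation relying solely on the definitions already stated in the proposition.
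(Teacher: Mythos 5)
Your proposal is correct, and for parts (4) and (5) it follows essentially the same route as the paper: split the series for $\theta_3(q^{1/9})$ (resp.\ $\theta_2(q^{1/9})$) into three residue classes, observe that the two nonzero classes fold onto each other under negation of the summation index, and conclude $\theta_3(q^{1/9}) = \theta_3(q)+2\psi_3(q)$ and $\theta_2(q^{1/9}) = \theta_2(q)+2\psi_6(q)$. Where you genuinely diverge is in parts (1)--(3): the paper simply cites these as Theorem 3.7 and Corollary 3.4 of Cooper, whereas you supply complete elementary proofs. Your argument for (2) and (3) via the change of variables $u=m+n$, $v=m-n$ on the double sum $\sum_{m,n}(\pm1)^{m+n}q^{m^2+n^2}$ is the standard lattice-splitting proof and checks out, as does the duplication formula $\theta_2(q)^2=2\theta_2(q^2)\theta_3(q^2)$ obtained by the same substitution (one parity class gives $((2a+1)^2+(2b)^2)/2$ and the other $((2a+2)^2+(2b+1)^2)/2$, each contributing $\theta_2(q^2)\theta_3(q^2)$). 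Combining it with the factorization $\theta_3^4-\theta_4^4=(\theta_3^2+\theta_4^2)(\theta_3^2-\theta_4^2)=4\theta_3(q^2)^2\theta_2(q^2)^2$ does yield (1). The trade-off is that your version is longer but self-contained and uses only the series definitions, while the paper's is shorter at the cost of deferring (1)--(3) to the literature; either is acceptable, and your derivation of (1) from (2), (3), and the duplication formula is a nice touch since it avoids invoking the quartic identity as a black box.
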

\begin{proof}
Identity (1) is Theorem 3.7 in \cite{cooper}, (2) and (3) are the second identity in Corollary 3.4 in \cite{cooper} where (3) comes from replacing $q$ with $-q$. For (4), notice that:
\begin{align}
    \theta_3(q^{1/9}) &= \sum_{n = -\infty}^\infty q^{\frac{1}{9}n^2}\\
    &= \sum_{n\equiv 0\mod 3}q^{\frac{1}{9}n^2}+\sum_{n\equiv 1\mod 3}q^{\frac{1}{9}n^2}+\sum_{n\equiv -1\mod 3}q^{\frac{1}{9}n^2}\\
    &= \sum_{k = -\infty}^\infty q^{\frac{1}{9}(3k)^2}+\sum_{k = -\infty}^\infty q^{\frac{1}{9}(3k+1)^2}+\sum_{k = -\infty}^\infty q^{\frac{1}{9}(-3k-1)^2}\\
    &= \sum_{k = -\infty}^\infty q^{k^2}+\sum_{k = -\infty}^\infty q^{k^2+\frac{2}{3}k+\frac{1}{9}}+\sum_{k = -\infty}^\infty q^{k^2+\frac{2}{3}k+\frac{1}{9}}\\
    &= \sum_{k = -\infty}^\infty q^{k^2}+2\sum_{k = -\infty}^\infty q^{(k+1/3)^2}\\
    &= \theta_3(q)+2\psi_3(q)
\end{align}
which will immediately give the desired result. Similarly,
\begin{align}
    \theta_2(q^{1/9}) &= \sum_{n = -\infty}^\infty q^{\frac{1}{9}(n+1/2)^2} = \sum_{n = -\infty}^\infty q^{\frac{n^2+n+1/4}{9}}\\
    &= \sum_{n\equiv 1\mod 3}q^{\frac{n^2+n+1/4}{9}}+\sum_{n\equiv 0\mod 3}q^{\frac{n^2+n+1/4}{9}}+\sum_{n\equiv -1\mod 3}q^{\frac{n^2+n+1/4}{9}}\\
    &= \sum_{k = -\infty}^\infty q^{\frac{9k^2+9k+9/4}{9}}+\sum_{k = -\infty}^\infty q^{\frac{9k^2+3k+1/4}{9}}+\sum_{k = -\infty}^\infty q^{\frac{9k^2-3k+1/4}{9}}\\
    &= \sum_{k = -\infty}^\infty q^{k^2+k+1/4}+2\sum_{k = -\infty}^\infty q^{k^2+\frac{1}{3}k+\frac{1}{36}}\\
    &= \sum_{k = -\infty}^\infty q^{(k+1/2)^2}+2\sum_{k = -\infty}^\infty q^{(k+1/6)^2}\\
    &= \theta_2(q)+2\psi_6(q)
\end{align}
where rightmost two summands in line (9) combine to give line (10) by changing the index of summation in the third summand to $-k$. This immediately gives (5).
\end{proof}

\subsection{Ramanujan's Theta Functions}
Ramanujan defined a generalization of the theta functions above. Good information on these can be found in Cooper \cite{cooper} as well as in Berndt et al. \cite{berndt}.

\begin{definition}
A \textbf{general theta function}, $f(a,b)$ is defined by $$f(a,b)\coloneqq\sum_{n = -\infty}^\infty a^{n(n+1)/2}b^{n(n-1)/2}$$ where $|ab|<1$.
\end{definition}
In his notebook (see \cite{ramanujan}, page 34, entry 18), Ramanujan showed the following properties of $f(a,b)$:

\begin{proposition}
\leavevmode
\begin{enumerate}[(1)]
    \item $f(a,b) = f(b,a)$
    \item $f(1,a) = 2f(a,a^3)$
    \item $f(-1,a) = 0$
    \item $f(a,b) = (-a;ab)_\infty(-b;ab)_\infty(ab;ab)_\infty$.
\end{enumerate}
\end{proposition}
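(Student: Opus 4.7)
The plan is to dispatch the four assertions in order, with the caveat that part (4) contains the real content and will imply part (3). Part (1) is a pure reindexing: in $f(a,b) = \sum_{n \in \Z} a^{n(n+1)/2} b^{n(n-1)/2}$, substitute $n \mapsto -n$. Since $(-n)(-n+1)/2 = n(n-1)/2$ and $(-n)(-n-1)/2 = n(n+1)/2$, the summand transforms into $a^{n(n-1)/2} b^{n(n+1)/2}$, and the reindexed series is exactly $f(b,a)$.

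For part (2), I would split $f(1,a) = \sum_{n \in \Z} a^{n(n-1)/2}$ according to the parity of $n$. The substitution $n = 2k$ produces the exponent $2k^2 - k$, while $n = 2k+1$ produces $2k^2 + k$; replacing $k \mapsto -k$ in the odd-$n$ part identifies the two halves, giving $f(1,a) = 2\sum_{k \in \Z} a^{2k^2 - k}$. On the other side, direct expansion collapses the exponent in $f(a,a^3) = \sum_n a^{n(n+1)/2 + 3n(n-1)/2}$ to $2n^2 - n$, matching the previous sum and yielding the identity.

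The substantive step is part (4), after which (3) is immediate. The key observation is that the Jacobi Triple Product Identity already proved in the excerpt specializes to $f(a,b)$ under $q = (ab)^{1/2}$ and $x = (a/b)^{1/2}$, because then $q^{n^2} x^n = a^{n(n+1)/2} b^{n(n-1)/2}$. A short computation applied to each of the three factors on the product side gives $xq^{2n-1} = a(ab)^{n-1}$, $x^{-1}q^{2n-1} = b(ab)^{n-1}$, and $q^{2n} = (ab)^n$; repackaging the resulting infinite products in $q$-Pochhammer notation produces $(-a;ab)_\infty(-b;ab)_\infty(ab;ab)_\infty$, which is part (4). Part (3) then follows by setting $a = -1$ in (4): this yields the factor $(1;-b)_\infty = \prod_{n \geq 0}(1-(-b)^n)$, whose $n=0$ term is $1-1=0$, so $f(-1,b)=0$; renaming $b$ as $a$ recovers $f(-1,a)=0$.

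The only point requiring genuine attention is the formal bookkeeping with the half-integer exponents $q = (ab)^{1/2}$ and $x = (a/b)^{1/2}$ used in the derivation of (4): one must verify that these auxiliary fractional powers disappear after the three product factors are assembled, which the explicit computations above confirm (each product factor lies back in $\Z[\![a,b]\!]$, and the hypothesis $|ab|<1$ supplies the analytic convergence). Once this is checked, everything reduces to the previously proved Jacobi triple product together with straightforward index manipulation.
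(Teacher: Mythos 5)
Your proof is correct, and for part (4) it uses exactly the substitution $x=\sqrt{a/b}$, $q=\sqrt{ab}$ in the Jacobi Triple Product that the paper itself indicates; the paper otherwise simply cites Ramanujan's notebook for (1)--(3), so your index manipulations for (1) and (2) and the derivation of (3) from the vanishing factor $(1;-b)_\infty$ in (4) supply details the paper omits rather than diverging from it.
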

Note that 4 follows by applying the Jacobi Triple Product Identity (JTP) with $x = \sqrt{a/b}$ and $q = \sqrt{ab}$. We now give three important theta functions of Ramanujan:

\begin{definition}[Ramanujan Theta Functions]
\label{RamThetaDef}
\leavevmode
\begin{enumerate}[(1)]
    \item $\displaystyle\phi(q)\coloneqq f(q,q) = \sum_{n = -\infty}^\infty q^{n^2} = (-q;q^2)^2_\infty(q^2;q^2)_\infty$
    \item $\displaystyle\psi(q)\coloneqq f(q,q^3) = \sum_{n = 0}^\infty q^{n(n+1)/2} = \dfrac{(q^2;q^2)_\infty}{(q;q^2)_\infty}$
    \item $\displaystyle f(-q)\coloneqq f(-q,-q^2) = \sum_{n = -\infty}^\infty(-1)^nq^{n(3n-1)/2} = (q;q)_\infty$
\end{enumerate}
\end{definition}
Notice that $\phi(q) = \theta_3(q)$, $\phi(-q) = \theta_4(q)$ and $\psi(q) = \dfrac{1}{2}q^{-1/8}\theta_2(q^{1/2})$. We also notice that $f(-q) = \varphi(q)$ and, in doing so, recover Euler's Pentagonal Number Theorem. We now define one of Ramanujan's Eisenstein Series that will be useful to us later on:

\begin{definition}
$$P(q)\coloneqq q\dfrac{d}{dq}\log(\eta_1^{24}) = 1-24\sum_{j = 1}^\infty\frac{jq^j}{1-q^j}.$$
\end{definition} The following was originally proven by Jacobi in \cite{jacobi}. For a more modern treatment, see Theorem 3.26 in \cite{cooper}.

\begin{proposition}
\label{PhiEis}
\leavevmode
\begin{enumerate}[(1)]
    \item $\phi(q)^4 = \dfrac{1}{3}(4P(q^4)-P(q))$
    \item $\phi(-q)^4 = \dfrac{1}{3}(P(q)-6P(q^2)+8P(q^4))$
\end{enumerate}
\end{proposition}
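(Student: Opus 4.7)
The plan is to derive both identities from Jacobi's four-squares theorem, which expresses the coefficients of $\phi(q)^4$ in arithmetic terms, together with the Lambert series expansion of $P$.

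First I would invoke Jacobi's classical formula: for $n\geq 1$, the number $r_4(n)$ of representations of $n$ as an ordered sum of four squares satisfies
$$r_4(n) \;=\; 8\sum_{\substack{d\mid n\\ 4\nmid d}} d \;=\; 8\bigl(\sigma(n)-4\sigma(n/4)\bigr),$$
where $\sigma(n/4):=0$ unless $4\mid n$. Since $\phi(q)^4=\sum_{n\geq 0}r_4(n)q^n$ with $r_4(0)=1$, this gives
$$\phi(q)^4 \;=\; 1 \;+\; 8\sum_{n\geq 1}\sigma(n)q^n \;-\; 32\sum_{m\geq 1}\sigma(m)q^{4m}.$$
On the other hand, from $P(q)=1-24\sum_{n\geq 1}\sigma(n)q^n$ one reads off $\sum_{n\geq 1}\sigma(n)q^n=(1-P(q))/24$ and, replacing $q$ by $q^4$, $\sum_{m\geq 1}\sigma(m)q^{4m}=(1-P(q^4))/24$. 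Substituting these into the previous display and collecting constants yields $\phi(q)^4=\tfrac13\bigl(4P(q^4)-P(q)\bigr)$, which is identity (1).

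For (2), the key observation is that $(-q)^{n^2}=(-1)^{n^2}q^{n^2}=(-1)^n q^{n^2}$, so $\phi(-q)=\sum_n(-1)^n q^{n^2}$. Because $n_1^2+\cdots+n_4^2\equiv n_1+\cdots+n_4\pmod 2$, expanding the fourth power gives
$$\phi(-q)^4 \;=\; \sum_{N\geq 0}(-1)^N r_4(N)\,q^N \;=\; 1 \;+\; 8\sum_{N\geq 1}(-1)^N\bigl(\sigma(N)-4\sigma(N/4)\bigr)q^N.$$
Expanding the proposed right-hand side $\tfrac13(P(q)-6P(q^2)+8P(q^4))$ by the Lambert series for $P(q)$, $P(q^2)$ and $P(q^4)$ reduces (2) to the coefficientwise claim
$$(-1)^N\bigl(\sigma(N)-4\sigma(N/4)\bigr) \;=\; -\sigma(N)+6\sigma(N/2)-8\sigma(N/4)$$
for all $N\geq 1$, with the convention that $\sigma(N/k):=0$ when $k\nmid N$. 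I would verify this by splitting into the three cases $N$ odd, $N\equiv 2\pmod 4$, and $4\mid N$. The first two fall out immediately (using $\sigma(2M)=3\sigma(M)$ for odd $M$), and the third reduces to the relation $\sigma(4M)=3\sigma(2M)-2\sigma(M)$, which follows for every $M\geq 1$ from the multiplicative formula $\sigma(2^a m)=(2^{a+1}-1)\sigma(m)$ with $m$ odd.

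The only real obstacle is Jacobi's four-squares theorem itself, which I would simply cite as classical (it also appears, for example, as Theorem 3.26 in \cite{cooper}). A conceptually cleaner but heavier alternative would be to observe that both sides of each identity transform in the same way under the generators of $\Gamma_0(4)$, so that their difference is a quasi-modular form of weight $2$ vanishing at enough Fourier coefficients to be identically zero; however, the arithmetic route above requires no modular-form machinery beyond what the statement itself already invokes.
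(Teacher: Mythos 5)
Your proposal is correct, but it is worth noting that the dissertation does not actually prove this proposition at all: it cites Jacobi \cite{jacobi} for the original result and points to Theorem 3.26 of \cite{cooper} for a modern treatment. So you are supplying a derivation where the text supplies only a reference, and the derivation checks out. The Lambert-series identification $\sum_{n\geq 1}\sigma(n)q^n=(1-P(q))/24$, the rewriting $r_4(n)=8\bigl(\sigma(n)-4\sigma(n/4)\bigr)$, and the three-case coefficient verification for identity (2) --- including the relation $\sigma(4M)=3\sigma(2M)-2\sigma(M)$, which follows from $\sigma(2^a m)=(2^{a+1}-1)\sigma(m)$ --- are all right. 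One caveat you should be candid about: after expanding $P$, identity (1) is \emph{literally} the generating-function form of the four-squares theorem, so for part (1) you are translating the cited theorem rather than proving anything beyond it; the classical result you lean on is exactly as deep as the claim itself. Where your write-up genuinely earns its keep is part (2): the observation that $\phi(-q)^4=\sum_{N\geq 0}(-1)^N r_4(N)q^N$ together with the elementary divisor-sum case analysis reduces the second identity to the first by purely arithmetic means. That gives a clean, self-contained argument (modulo one standard citation) and is arguably more informative than the bare reference in the text; your remark that a weight-two quasi-modular-form argument on $\Gamma_0(4)$ would also work is accurate but, as you say, heavier than necessary here.
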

Finally, we define the Rogers-Ramanujan series $G(q)$ and $H(q)$.

\begin{definition}[Rogers-Ramanujan Series]
\leavevmode
\begin{enumerate}[(1)]
    \item $\displaystyle G(q)\coloneqq\sum_{n = 0}^\infty\frac{q^{n^2}}{(q;q)_n}$
    \item $\displaystyle H(q)\coloneqq\sum_{n = 0}^\infty\frac{q^{n(n+1)}}{(q;q)_n}$
\end{enumerate}
where $(a;q)_n\coloneqq\prod_{k = 0}^{n-1}(1-aq^k)$.
\end{definition}
These functions satisfy the following identities (see \cite{bern22}, \cite{bern17} and pages 214-215 of \cite{bern19}):

\begin{proposition}
\label{GH}
\leavevmode
\begin{enumerate}[(1)]
    \item $G(q) = \dfrac{1}{(q;q^5)_\infty(q^4;q^5)_\infty}$
    \item $H(q) = \dfrac{1}{(q^2;q^5)_\infty(q^3;q^5)_\infty}$.
\end{enumerate}
\end{proposition}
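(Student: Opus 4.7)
The plan is to prove both identities via the classical functional-equation method that Rogers used in 1894. Introduce the auxiliary two-variable series
$$F(a) := \sum_{n \geq 0} \frac{a^n q^{n^2}}{(q;q)_n}$$
so that $G(q) = F(1)$ and, since $a^n = q^n$ when $a = q$ gives $q^{n^2+n} = q^{n(n+1)}$, also $H(q) = F(q)$. Thus both identities reduce to finding a closed product form for two specializations of $F$.

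First I would derive the fundamental recursion
$$F(a) = F(aq) + a q\, F(aq^2).$$
This comes out by a direct manipulation: term-by-term,
$F(a) - F(aq) = \sum_{n \geq 1} a^n q^{n^2}(1 - q^n)/(q;q)_n = \sum_{n \geq 1} a^n q^{n^2}/(q;q)_{n-1}$, and reindexing $n \mapsto n+1$ produces $aq\, F(aq^2)$. Next I would iterate this recursion to express $F(a)$ as a bilateral series. The standard device here is to write $F(a) = \sum_{n \geq 0} c_n(a)$ with $c_n(a)$ obtained by repeated substitution, and then to reorganize the resulting expansion (this is the step one traditionally performs via Watson's $q$-analog of Whipple's transformation, or equivalently via Bailey's lemma) to obtain Rogers' identity
$$(q;q)_\infty \cdot F(a) = \sum_{n \in \Z} (-1)^n a^{2n} q^{n(5n-1)/2}\bigl(1 - a q^{2n+1}\bigr) \cdot (\text{collecting factors}),$$
which upon specialization cleanly produces
$$(q;q)_\infty\, G(q) = \sum_{n \in \Z} (-1)^n q^{n(5n-1)/2}, \qquad (q;q)_\infty\, H(q) = \sum_{n \in \Z} (-1)^n q^{n(5n+3)/2}.$$

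The final step applies the Jacobi Triple Product Identity stated earlier in the excerpt. For the $G$-sum, the substitution $q \mapsto q^{5/2}$, $x \mapsto -q^{3/2}$ (or an equivalent normalization) into the JTP yields $(q;q^5)_\infty (q^4;q^5)_\infty (q^5;q^5)_\infty$; for the $H$-sum, the analogous substitution yields $(q^2;q^5)_\infty (q^3;q^5)_\infty (q^5;q^5)_\infty$. Dividing by $(q;q)_\infty = \prod_{i=1}^{5}(q^i;q^5)_\infty / (q^5;q^5)_\infty \cdot (q^5;q^5)_\infty$ (with careful cancellation across the five arithmetic progressions mod $5$) leaves precisely $1/\bigl((q;q^5)_\infty(q^4;q^5)_\infty\bigr)$ for $G(q)$ and $1/\bigl((q^2;q^5)_\infty(q^3;q^5)_\infty\bigr)$ for $H(q)$, as required.

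The main obstacle is the middle step: converting the functional recursion on $F(a)$ into the explicit bilateral theta-like series. Steps one (the recursion) and three (the JTP application) are direct manipulations, but the intermediate transformation is historically the deep part of the Rogers-Ramanujan identities and requires either an appeal to Watson's transformation, Bailey's lemma, or Rogers' original intricate rearrangement. Since the paper only needs these identities as input and cites \cite{berndt} for the detailed derivation, I would present the recursion and the JTP reduction in full and invoke the transformation step as the recorded classical result.
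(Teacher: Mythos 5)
The paper does not actually prove this proposition: it states the product forms of $G$ and $H$ as known results with citations to \cite{bern22}, \cite{bern17} and \cite{bern19}, precisely because these are the Rogers--Ramanujan identities, whose proof is a substantial piece of classical $q$-series analysis that the dissertation only needs as input. Your sketch follows the standard Rogers-style outline, and the two elementary bookends are correct: the recursion $F(a)=F(aq)+aq\,F(aq^2)$ is verified exactly as you describe, and the passage from the bilateral sums to the products via the Jacobi Triple Product is routine. But the step you yourself flag as ``the main obstacle'' --- getting from the recursion to $(q;q)_\infty G(q)=\sum_{n\in\Z}(-1)^nq^{n(5n-1)/2}$ and $(q;q)_\infty H(q)=\sum_{n\in\Z}(-1)^nq^{n(5n+3)/2}$ --- is the entire content of the theorem, and you invoke Watson's transformation or Bailey's lemma for it as a black box. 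So your proposal ultimately rests on the same external citation that the paper does; it is a more detailed framing of the citation rather than an independent proof. That is a defensible choice here, but it should be stated as such.

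One concrete slip to repair in the last step: your triple-product evaluations are attached to the wrong series. The $G$-sum $\sum_{n\in\Z}(-1)^nq^{n(5n-1)/2}$ arises from the JTP with $q\mapsto q^{5/2}$, $x\mapsto -q^{-1/2}$ and equals $(q^2;q^5)_\infty(q^3;q^5)_\infty(q^5;q^5)_\infty$; dividing by $(q;q)_\infty=(q;q^5)_\infty(q^2;q^5)_\infty(q^3;q^5)_\infty(q^4;q^5)_\infty(q^5;q^5)_\infty$ then leaves $1/\bigl((q;q^5)_\infty(q^4;q^5)_\infty\bigr)$. The substitution $x\mapsto -q^{3/2}$ that you assign to $G$ in fact produces the $H$-sum, which equals $(q;q^5)_\infty(q^4;q^5)_\infty(q^5;q^5)_\infty$ and whose quotient by $(q;q)_\infty$ is $1/\bigl((q^2;q^5)_\infty(q^3;q^5)_\infty\bigr)$. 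As written, your intermediate products are inconsistent with the quotients you then claim for them, even though the final identities you arrive at are the correct statements.
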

Two other identities involving $G$ and $H$ are:
\begin{proposition}
\label{RRR}
\leavevmode
\begin{enumerate}[(1)]
    \item $G(q^2)G(q^3)+qH(q^2)H(q^3) = \dfrac{\chi(-q^3)}{\chi(-q)}$
    \item $G(q^6)H(q)-qG(q)H(q^6) = \dfrac{\chi(-q)}{\chi(-q^3)}$
\end{enumerate}
where $\chi(q)\coloneqq (-q;q^2)_\infty$.
\end{proposition}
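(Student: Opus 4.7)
The plan is to reduce each identity to a theta function identity via Proposition \ref{GH} and then verify it using the Jacobi Triple Product Identity together with modular-forms bookkeeping. I describe the approach for identity (1); identity (2) is analogous.

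First, I would apply Proposition \ref{GH} to rewrite the Rogers-Ramanujan series as $q$-Pochhammer products:
\[
G(q^2)G(q^3) = \frac{1}{(q^2;q^{10})_\infty(q^8;q^{10})_\infty(q^3;q^{15})_\infty(q^{12};q^{15})_\infty},
\]
and similarly for $H(q^2)H(q^3)$. Expanding $\chi(-q^3)/\chi(-q) = (q^3;q^6)_\infty/(q;q^2)_\infty$ on the right and multiplying the putative identity through by a common Pochhammer product yields a two-term equation in finite products of $q$-Pochhammer symbols with bases dividing $q^{30}$, since $\operatorname{lcm}(10,15)=30$.

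Next, I would regroup each such product so that it matches a Ramanujan general theta function
\[
f(-a,-b) = (a;ab)_\infty(b;ab)_\infty(ab;ab)_\infty.
\]
Applying the Jacobi Triple Product Identity then converts each product into a bilateral theta series, turning the identity into a linear relation among theta series of the shape $\sum_n q^{an^2+bn}$ with small rational $a,b$. This is a classical two-term theta relation of the type catalogued in \cite{berndt}. For identity (2), the same strategy applied to $G(q^6)H(q)$ and $qG(q)H(q^6)$ with the mixed bases $q^5$ and $q^{30}$ produces a structurally similar theta relation.

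The main obstacle is establishing this final theta relation. My approach would be to multiply through by an appropriate $\eta$-quotient so that both sides become modular functions of the same weight and character on a common congruence subgroup of level dividing $30$; the valence formula on that subgroup then reduces the equality to checking the agreement of a finite and explicitly computable number of initial Fourier coefficients, which can be verified directly from a $q$-expansion. Alternatively, one can invoke a Schr\"oter-type addition theorem for theta series to factor one side and reduce to a quintuple product identity of Ramanujan. The most delicate step is the bookkeeping needed to track all Pochhammer factors in base $q^{30}$ and to identify the regrouping that exhibits each product as an $f(-a,-b)$; once the correct theta relation is isolated, its verification is routine and follows the pattern used in \cite{berndt}.
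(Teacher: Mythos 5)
You should first be aware that the paper does not prove this proposition at all: it is Entries 3.7 and 3.8 of Ramanujan's forty identities for the Rogers--Ramanujan functions, and the text simply cites Rogers's original proof (equations 10.1 and 10.2 of \cite{Rogers}) and the survey \cite{berndt}. So your proposal is not being measured against an internal argument but against the classical literature, and the fact that these two identities belong to the ``forty'' should already signal that they do not fall to a routine application of the Jacobi Triple Product Identity.

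The concrete gap is in your middle step. Writing $G(q)=1/[(q;q^5)_\infty(q^4;q^5)_\infty]$ via Proposition \ref{GH} and invoking the JTP gives $G(q)=(q^5;q^5)_\infty/f(-q,-q^4)$, i.e. the theta series lands in the \emph{denominator}. Consequently $G(q^2)G(q^3)+qH(q^2)H(q^3)$ does not become ``a linear relation among theta series of the shape $\sum_n q^{an^2+bn}$''; after clearing denominators you are left with an identity among \emph{products} of two bilateral theta series (genuine double sums), and decomposing such products into linear combinations of single theta series is exactly the hard content of these identities. That decomposition requires Schr\"oter-type or quintuple-product machinery of the kind Rogers, Watson and Bressoud developed for this purpose --- you mention this only as an ``alternative,'' and the assertion that ``once the correct theta relation is isolated, its verification is routine'' conceals the entire difficulty. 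Your fallback route --- multiplying by an $\eta$-quotient to obtain holomorphic modular forms on a congruence subgroup of level dividing $30$ and comparing coefficients up to the Sturm bound --- is in fact a legitimate and known way to prove these two entries (it is essentially Biagioli's method), but as written it is a program rather than a proof: one must exhibit the $\eta$-multiplier, verify holomorphy at all cusps, compute the bound, and check the coefficients, none of which is carried out. As it stands the proposal identifies a workable strategy but does not close either route, so it cannot be accepted as a proof; citing \cite{Rogers} and \cite{berndt}, as the paper does, remains the appropriate disposition.
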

These identities were proven by Rogers in \cite{Rogers} (see equations 10.1 and 10.2; see also entries 3.7 and 3.8 in \cite{berndt}).

\subsection{The Borweins' Cubic Theta Functions}
In \cite{borwein1}, the Borweins' introduced the so-called ``cubic theta functions'' $a(q), b(q)$ and $c(q)$. They are so named because they can be related by a formula similar to proposition \ref{jacobiprops} (1) that involves third powers.

\begin{definition}[The Borweins' Cubic Theta Functions]
\leavevmode
\begin{enumerate}[(1)]
    \item $\displaystyle a(q)\coloneqq\sum_{m,n\in\Z}q^{n^2+nm+m^2}$
    \item $\displaystyle b(q)\coloneqq\sum_{m,n\in\Z}\nu_3^{n-m}q^{n^2+nm+m^2}$
    \item $\displaystyle c(q)\coloneqq\sum_{m,n\in\Z}q^{(n+\frac{1}{3})^2+(n+\frac{1}{3})(m+\frac{1}{3})+(m+\frac{1}{3})^2}$
\end{enumerate}
\end{definition}
We will now state some identities relating the cubic theta functions to each other and to some of the other theta functions defined above.

\begin{proposition}
\label{CubicThetaProps}
\leavevmode
\begin{enumerate}[(1)]
    \item $a(q) = \theta_3(q)\theta_3(q^3)+\theta_2(q)\theta_2(q^3)$
    \item $b(q) = \dfrac{\eta_1^3}{\eta_3} = \dfrac{\varphi_1^3}{\varphi_3}$
    \item $c(q) = \dfrac{3\eta_3^3}{\eta_1} = \dfrac{3q^{1/3}\varphi_3^3}{\varphi_1}$
    \item $a(q^4) = \dfrac{1}{2}(\theta_3(q)\theta_3(q^3)+\theta_4(q)\theta_4(q^3))$
    \item $a(q)^3 = b(q)^3+c(q)^3$
    \item $b(q) = \dfrac{3}{2}a(q^3)-\dfrac{1}{2}a(q)$
    \item $c(q) = \dfrac{1}{2}a(q^{1/3})-\dfrac{1}{2}a(q)$
    \item $a(q)+a(-q) = 2a(q^4)$
    \item $b(q)+b(-q) = 2b(q^4)$
    \item $c(q)+c(-q) = 2c(q^4)$
    \item $b(q) = a(q^3)-c(q^3)$
    \item $a(q)-a(q^4) = 6q\psi(q^2)\psi(q^6)$
    \item $a(q)a(q^2) = \dfrac{1}{4}(6P(q^6)-3P(q^3)+2P(q^2)-P(q))$
    \item $b(q)b(q^2) = \dfrac{1}{8}(18P(q^6)-9P(q^3)-2P(q^2)+P(q))$
    \item $c(q)c(q^2) = \dfrac{3}{8}(-2P(q^6)+P(q^3)+2P(q^2)-P(q))$
\end{enumerate}
\end{proposition}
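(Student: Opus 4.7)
The plan is to treat the fifteen identities in four natural groups, since many follow from the same lattice-sum and Jacobi Triple Product machinery applied in different regimes. First, for the series-and-product representations (1)--(4): for (1), I would split the double sum defining $a(q)$ by the parity of $n+m$ (equivalently, the parity of $nm$ since $n^2+nm+m^2\equiv nm\pmod 2$); after reindexing each coset by $(n,m)\mapsto(2n'+r,2m'+s)$ the two resulting double sums factor into products of one-dimensional theta series, giving $\theta_3(q)\theta_3(q^3)+\theta_2(q)\theta_2(q^3)$. For (2) and (3) I would interpret the exponent $n^2+nm+m^2$ as the Eisenstein-integer norm $|n+m\nu_3|^2$ and introduce the Dirichlet character $\chi(n-m)=\nu_3^{n-m}$, which selects the sublattice $n\equiv m\pmod 3$; the Jacobi Triple Product Identity collapses the remaining one-variable sums to infinite products that, after cancellation, are precisely $\varphi_1^3/\varphi_3$ and $3q^{1/3}\varphi_3^3/\varphi_1$. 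Identity (4) is obtained from (1) by the same parity-dissection applied to $\theta_3(q)\theta_3(q^3)$, using the classical $\theta_3(q)=\theta_3(q^4)+q\psi(q^8)\cdot(\text{shift})$ relation.

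Next, for (5)--(7), (11): the Borwein cubic identity $a(q)^3=b(q)^3+c(q)^3$ is genuinely the deep statement of the proposition. The cleanest route is to regard both sides as modular forms of weight $1$ on the appropriate congruence subgroup (both are sums over the Eisenstein lattice and transform accordingly), reducing the identity to verification of finitely many Fourier coefficients once the space is shown to be low-dimensional; alternatively I would simply cite Borwein--Borwein \cite{borwein1}. For (6) and (7), I would decompose the sum defining $a(q^{1/3})$ according to the residue of $n+m\pmod 3$: the coset $n+m\equiv 0$ reproduces $a(q)$, while the cosets $n+m\equiv\pm 1$ combine (via $\nu_3$ symmetry) to $b(q)$ and to $2c(q)$, giving two linear relations from which (6) and (7) follow algebraically. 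Identity (11) then follows by substituting $q\mapsto q^3$ in (6) and (7) and eliminating $a(q)$.

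The even-part identities (8)--(10) follow immediately: in each double lattice sum, the terms with exponent of fixed parity are preserved under $q\mapsto -q$ up to sign, and averaging $f(q)+f(-q)$ keeps only the even-exponent terms; those are indexed by the sublattice $2\mathbb{Z}\times 2\mathbb{Z}$ (or the appropriate coset for $c$), which reindexes to $a(q^4),b(q^4),c(q^4)$ respectively. Identity (12) comes from combining (8) with the observation that the odd terms of $a(q)$ — those with $nm$ odd — can, after completing the square, be rewritten as $6q\sum_{n,m\geq 0}q^{2n(n+1)/2+6m(m+1)/2}=6q\psi(q^2)\psi(q^6)$, the factor of $6$ coming from the six-fold symmetry of the hexagonal lattice.

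Finally, for the Eisenstein-series identities (13)--(15), the plan is to apply the logarithmic derivative $q\frac{d}{dq}\log(\cdot)$ to the $\eta$-quotient expressions in (2), (3) and the analogous expression $a(q)=\varphi_1\varphi_3\cdot(\text{something})$ extracted from (1)--(5). Since $P(q^n)=q\frac{d}{dq}\log\eta_n^{24}$, each product $a(q)a(q^2)$, $b(q)b(q^2)$, $c(q)c(q^2)$ becomes a $\mathbb{Z}$-linear combination of $P(q^j)$ for $j\in\{1,2,3,6\}$, and the coefficients are read off from the $\eta$-exponents. I expect (5) to be by far the main obstacle — it is the only identity whose content is not a formal consequence of dissection or logarithmic differentiation — while (13)--(15) will be tedious bookkeeping rather than conceptually hard once the product forms (2) and (3) are in hand.
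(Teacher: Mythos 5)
Your plan is far more ambitious than the paper's proof, which establishes (11) and (12) by short derivations from the other parts and otherwise simply cites Borwein--Borwein--Garvan for (1)--(10), Liouville/Alaca--Alaca--Williams for (13), and Fine for (14)--(15). Your derivation of (11) by substituting $q\mapsto q^3$ into (6) and (7) and eliminating is exactly the paper's argument, and your lattice-dissection strategy for (1), (4), (6)--(10) and (12) is the standard (and workable) route, though several stated details are wrong: $n^2+nm+m^2\equiv n+m+nm\pmod 2$, not $nm$, so the exponent is even iff \emph{both} $n,m$ are even and the correct dissection for (1) is by the common parity of $n\pm m$ via $n^2+nm+m^2=\tfrac34(n+m)^2+\tfrac14(n-m)^2$; the trisection underlying (6)--(7) is governed by $n\equiv m\pmod 3$ (since $n^2+nm+m^2=(n-m)^2+3nm$), not by $n+m\pmod 3$; and the odd part of $a(q)$ consists of terms with \emph{at least one} of $n,m$ odd, not with $nm$ odd. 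For (9)--(10) you also need to note that the restricted character $\nu_3^{2(n'-m')}$ agrees with $\nu_3^{n'-m'}$ after the symmetry $(n,m)\mapsto(m,n)$.

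There are two genuine gaps. First, your mechanism for (13)--(15) does not work as stated: applying $q\frac{d}{dq}\log(\cdot)$ to $\frac{\eta_1^3\eta_2^3}{\eta_3\eta_6}$ produces the \emph{logarithmic derivative} of $b(q)b(q^2)$, namely $\tfrac{1}{24}(3P(q)+6P(q^2)-3P(q^3)-6P(q^6))$, which is a different weight-two object from $b(q)b(q^2)$ itself; the coefficients cannot be ``read off from the $\eta$-exponents.'' Moreover $a(q)$ admits no $\eta$-product expression at all (it is $b+c$ only at the level of cubes), so the analogous step for (13) has nothing to differentiate. A correct elementary route is to observe that each product is a holomorphic weight-two form on $\Gamma_0(6)$, a space with no cusp forms and a three-dimensional Eisenstein subspace spanned by $P(q)-jP(q^j)$ for $j=2,3,6$, and then match finitely many coefficients --- or to cite Liouville/Alaca--Alaca--Williams and Fine as the paper does. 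Second, the product formulas (2)--(3) are not routine consequences of the Jacobi Triple Product: after introducing the character $\nu_3^{n-m}$ the double sum does not factor into one-variable theta series, and the known proofs (including the one in the source the paper cites) require a genuinely separate argument such as the $\Gamma_0(3)$ modularity of $\eta_1^3/\eta_3$ or a careful trisection combined with (6)--(7); you should either supply that argument or cite it, as you already propose to do for (5).
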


\begin{proof}
Identities 1, 4, 6 and 7 are lemma 2.1 (i)(a), (i)(b), (ii) and (iii) in \cite{borwein2} respectively. Identities 2 and 3 are proposition 2.2(i) and (ii) in \cite{borwein2} respectively. Identity 5 is theorem 2.3 in \cite{borwein2}. Identities 8-10 are given in the proof of theorem 2.6 in \cite{borwein2}. Identity 11 follows from 6 and 7 since $$a(q^3)-c(q)^3 = a(q^3)-\dfrac{1}{2}a(q)+\dfrac{1}{2}a(q^3) = \dfrac{3}{2}a(q^3)-\dfrac{1}{2}a(q) = b(q).$$ For identity 12, notice that $$a(q)-a(q^4) = a(q)-\dfrac{1}{2}(a(q)+a(-q))$$ (by 8) which equals $$\dfrac{1}{2}(a(q)-a(-q)).$$ At the top of page 41 in \cite{borwein2}, it is shown that $$a(q)-a(-q) = 3\theta_2(q)\theta_2(q^3),$$ so $$\dfrac{1}{2}(a(q)-a(-q)) = \dfrac{3}{2}\theta_2(q)\theta_2(q^3) = 6q\psi(q^2)\psi(q^6).$$ A result equivalent to identity 13 was first stated by Liouville in \cite{Liouville} without proof. A proof is given by Alaca, Alaca and Williams in \cite{Alaca} (see theorem 13). Finally, 14 and 15 were proven by Fine (see equations 33.1 and 33.124 on page 86 of \cite{fine}).
\end{proof}

\begin{corollary}
\label{oops}
$\phi(-q)\phi(-q^3)+2q\psi(q^2)\psi(q^6) = a(q^4)$.
\end{corollary}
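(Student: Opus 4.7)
The plan is to derive this identity by combining three of the identities listed in Proposition \ref{CubicThetaProps} together with the elementary relation between $\theta_2$ and $\psi$ that was noted immediately after Definition \ref{RamThetaDef}.

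First I would establish the auxiliary identity $\theta_2(q)\theta_2(q^3) = 4q\psi(q^2)\psi(q^6)$. From $\psi(q) = \tfrac{1}{2}q^{-1/8}\theta_2(q^{1/2})$, replacing $q$ by $q^2$ gives $\theta_2(q) = 2q^{1/4}\psi(q^2)$, and analogously $\theta_2(q^3) = 2q^{3/4}\psi(q^6)$. Multiplying yields the claimed relation. This is the step that converts $\psi$-language into $\theta_2$-language so that the cubic theta identities can be applied.

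Next I would subtract twice identity (4) from identity (1) of Proposition \ref{CubicThetaProps}. Identity (1) says $a(q) = \theta_3(q)\theta_3(q^3) + \theta_2(q)\theta_2(q^3)$ and identity (4) says $2a(q^4) = \theta_3(q)\theta_3(q^3) + \theta_4(q)\theta_4(q^3)$. Subtracting cancels the $\theta_3\theta_3$ term and leaves
\begin{equation*}
a(q) - 2a(q^4) = \theta_2(q)\theta_2(q^3) - \theta_4(q)\theta_4(q^3) = 4q\psi(q^2)\psi(q^6) - \phi(-q)\phi(-q^3),
\end{equation*}
using the auxiliary identity above together with $\theta_4(q) = \phi(-q)$.

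Finally I would use identity (12), which says $a(q) - a(q^4) = 6q\psi(q^2)\psi(q^6)$, to rewrite the left-hand side of the previous equation as $6q\psi(q^2)\psi(q^6) - a(q^4)$. Equating and rearranging gives precisely $\phi(-q)\phi(-q^3) + 2q\psi(q^2)\psi(q^6) = a(q^4)$. The only real obstacle is the bookkeeping — selecting the right three identities out of the fifteen in Proposition \ref{CubicThetaProps} — since once (1), (4), and (12) are lined up and the $\theta_2$-to-$\psi$ conversion is made, the result falls out by a single elimination of $\theta_3(q)\theta_3(q^3)$ and $a(q)$.
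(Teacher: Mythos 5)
Your proof is correct and uses exactly the same ingredients as the paper's own argument: identities (1), (4), and (12) of Proposition \ref{CubicThetaProps} together with the conversion $\theta_2(q)\theta_2(q^3) = 4q\psi(q^2)\psi(q^6)$. The only difference is cosmetic bookkeeping—you eliminate $\theta_3(q)\theta_3(q^3)$ and $a(q)$ directly, while the paper first derives the intermediate relation $\theta_3(q)\theta_3(q^3)=\theta_2(q)\theta_2(q^3)+\theta_4(q)\theta_4(q^3)$ and substitutes back—so the two proofs are essentially identical.
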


\begin{proof}
Recall that $\phi(-q) = \theta_4(q)$ and $\psi(q) = \dfrac{1}{2}q^{-1/8}\theta_2(q^{1/2})$. Our desired identity is then equivalent to $\theta_4(q)\theta_4(q^3)+\dfrac{1}{2}\theta_2(q)\theta_2(q^3) = a(q^4)$. By proposition \ref{jacobiprops} (1) and (4), $$a(q)-a(q^4) = \theta_3(q)\theta_3(q^3)+\theta_2(q)\theta_2(q^3)-\dfrac{1}{2}\theta_3(q)\theta_3(q^3)-\dfrac{1}{2}\theta_4(q)\theta_4(q^3),$$ but by proposition \ref{jacobiprops} (12), $$a(q)-a(q^4) = 6q\psi(q^2)\psi(q^6) = \dfrac{3}{2}\theta_2(q)\theta_2(q^3).$$ So $$\dfrac{1}{2}\theta_3(q)\theta_3(q^3)+\theta_2(q)\theta_2(q^3)-\dfrac{1}{2}\theta_4(q)\theta_4(q^3) = \dfrac{3}{2}\theta_2(q)\theta_2(q^3)$$ or, equivalently $$\dfrac{1}{2}\theta_3(q)\theta_3(q^3) = \dfrac{1}{2}\theta_2(q)\theta_2(q^3)-\dfrac{1}{2}\theta_4(q)\theta_4(q^3).$$ Thus, 
\begin{align*}
    \theta_4(q)\theta_4(q^3)+\dfrac{1}{2}\theta_2(q)\theta_2(q^3) &= \dfrac{1}{2}\theta_4(q)\theta_4(q^3)+\dfrac{1}{2}\theta_4(q)\theta_4(q^3)+\dfrac{1}{2}\theta_2(q)\theta_2(q^3)\\ 
    &= \dfrac{1}{2}\theta_3(q)\theta_3(q^3)+\dfrac{1}{2}\theta_4(q)\theta_4(q^3).
\end{align*}
But by proposition \ref{jacobiprops} (4), this equals $a(q^4)$ as desired.
\end{proof}

\section{More Classical Identities}
We prove two more sets of identities. We start by showing summation formulas for four $\eta$-quotients, which will be useful to us. Although these formulas are stated in several sources, their proofs are not given as they are straightforward applications of the JTP.

\begin{proposition}
\label{JTPids}
\leavevmode
\begin{enumerate}[(1)]
    \item $\displaystyle\frac{\eta_2\eta_3^2}{\eta_1\eta_6} = \sum_{n = -\infty}^\infty q^{(6n+1)^2/24}$
    \item $\displaystyle\frac{\eta_1\eta_6^2}{\eta_2\eta_3} = \sum_{n = -\infty}^\infty(-1)^nq^{(3n+1)^2/3}$
    \item $\displaystyle\frac{\eta_1^2\eta_6}{\eta_2\eta_3} = \sum_{n = -\infty}^\infty\nu_6^{2n+1}q^{(2n+1)^2/8}$
    \item $\displaystyle\frac{\eta_2^2\eta_3}{\eta_1\eta_6} = \sum_{n = -\infty}^\infty\nu_6^nq^{n^2}$.
\end{enumerate}
\end{proposition}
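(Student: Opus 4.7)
The plan is to derive all four identities by direct application of the Jacobi Triple Product Identity (JTP) already stated in the excerpt, followed by elementary manipulations of $q$-Pochhammer symbols to convert the resulting infinite products into $\eta$-quotients. For each identity, I would first complete the square inside the exponent of $q$ in order to pull out a global fractional power of $q$ (which will later match the prefactor $q^{k/24}$ hidden in the $\eta$-functions), reducing the sum either to a standard JTP series $\sum x^n q^{N n^2}$ or to a Ramanujan-style theta function $f(a,b) = \sum a^{n(n+1)/2} b^{n(n-1)/2}$ whose product form is recorded just after the definition of $f(a,b)$.

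For identity (1), the expansion $(6n+1)^2/24 = (3n^2+n)/2 + 1/24$ presents the series as $q^{1/24} f(q^2,q)$, and the product form gives $(-q;q^3)_\infty(-q^2;q^3)_\infty(q^3;q^3)_\infty$. Combining $(-q;q)_\infty = (q^2;q^2)_\infty/(q;q)_\infty$ with the splitting $(-q;q)_\infty = (-q;q^3)_\infty(-q^2;q^3)_\infty(-q^3;q^3)_\infty$ collapses this to $q^{1/24}\varphi_2\varphi_3^2/(\varphi_1\varphi_6) = \eta_2\eta_3^2/(\eta_1\eta_6)$. Identity (2) is handled analogously: expanding $(3n+1)^2/3 = 3n^2 + 2n + 1/3$ and applying JTP with $q \mapsto q^3$, $x = -q^2$ gives $(q;q^6)_\infty(q^5;q^6)_\infty(q^6;q^6)_\infty$; dividing the full product $(q;q)_\infty = \prod_{k=1}^{6}(q^k;q^6)_\infty$ by the remaining four residue classes yields $\eta_1\eta_6^2/(\eta_2\eta_3)$.

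For identities (3) and (4), the same scheme applies with the root of unity $\nu_6$ playing the role of (part of) the $x$-parameter. For (4), JTP with $x = \nu_6$ produces $\prod(1+\nu_6 q^{2n-1})(1+\nu_6^{-1}q^{2n-1})(1-q^{2n})$, and the key observation is that $\nu_6+\nu_6^{-1} = 1$ gives
\begin{equation*}
(1+\nu_6 q^{2n-1})(1+\nu_6^{-1}q^{2n-1}) = 1 + q^{2n-1} + q^{4n-2} = \frac{1-q^{6n-3}}{1-q^{2n-1}},
\end{equation*}
so the product is real and, after Pochhammer manipulations, equals $\eta_2^2\eta_3/(\eta_1\eta_6)$. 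For (3), the expansion $(2n+1)^2/8 = n(n+1)/2 + 1/8$ and the substitution $\nu_6^{2n+1} = \nu_6\,\nu_3^n$ recast the sum as $\nu_6 q^{1/8} f(\nu_3 q,\nu_3^{-1})$. Using the product formula for $f$ and the parallel factorization $(1+\nu_3 q^n)(1+\nu_3^{-1}q^n) = (1+q^{3n})/(1+q^n)$ (since $\nu_3+\nu_3^{-1} = -1$), the product collapses to a real $\eta$-quotient; the external factor $\nu_6$ cancels against the initial term $1+\nu_3^{-1} = \nu_6^{-1}$ arising from the $n=0$ contribution to the second Pochhammer.

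The main obstacle is purely clerical rather than conceptual: in (3) and (4) one must verify that the complex factors produced by JTP combine into a real expression, and one must track the accumulated fractional powers of $q$ (the $q^{1/24}$, $q^{1/3}$, $q^{1/8}$, or $q^0$ prefactor) so that they are exactly absorbed by the $q^{k/24}$ contributions inside the $\eta$-functions. Once these bookkeeping steps are performed carefully, each identity follows immediately from JTP and a few standard splittings of $(q;q)_\infty$.
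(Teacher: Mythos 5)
Your proposal is correct and follows essentially the same route as the paper's proof: complete the square in the exponent, apply the Jacobi Triple Product (directly or via the equivalent product formula for $f(a,b)$), and reduce the resulting Pochhammer products to $\eta$-quotients, handling the sixth roots of unity in (3) and (4) by the same cancellations ($\nu_6+\nu_6^{-1}=1$, equivalently $\nu_6(1-\nu_6)=1$) that the paper uses. The only differences are cosmetic choices of which Pochhammer splittings to invoke.
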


\begin{proof}
We begin by mentioning two elementary ideas: First, $$\displaystyle(q^a;q^b)_\infty = \prod_{\substack{k>0 \\ k\equiv a\mod b}}(1-q^k).$$ Second, $(q;q)_\infty(\nu_kq;q)_\infty(\nu_k^2q;q)\cdots(\nu_k^{k-1}q;q)_\infty = (q^k;q^k)_\infty$. We also note that $$\displaystyle(-q^a;q^b)_\infty = \prod_{k>0}(1+q^{kb+a}) = \prod_{k>0}\frac{1-q^{2kb+2a}}{1-q^{kb+a}} = \frac{(q^{2a};q^{2b})_\infty}{(q^a;q^b)_\infty}.$$

For (1), we rewrite the sum as $$q^{1/24}\sum_{n = -\infty}^\infty(q^{3/2})^{n^2}(q^{1/2})^n.$$ Applying the JTP, we get that this equals
\begin{align}
 q^{1/24}(-q^2;q^3)_\infty(-q;q^3)_\infty(q^3;q^3)_\infty &= q^{1/24}\frac{(q^4;q^6)_\infty}{(q^2;q^3)_\infty}\frac{(q^2;q^6)_\infty}{(q;q^3)_\infty}(q^3;q^3)_\infty\\
 &= q^{1/24}\frac{\varphi_2/\varphi_6}{\varphi_1/\varphi_3}\varphi_3\\
 &= q^{1/24}\frac{\varphi_2\varphi_3^2}{\varphi_1\varphi_6} = \frac{\eta_2\eta_3^2}{\eta_1\eta_6}.
\end{align}

For (2), we will again rewrite the sum and apply the JTP:
\begin{align}
\sum_{n = -\infty}^\infty(-1)^nq^{(3n+1)^2/3} &= q^{1/3}\sum_{n = -\infty}^\infty(q^3)^{n^2}(-q^2)^n\\
&= q^{1/3}(q^5;q^6)_\infty(q;q^6)_\infty(q^6;q^6)_\infty\\
&= q^{1/3}\frac{(q;q)_\infty}{(q^2;q^6)_\infty(q^4;q^6)_\infty(q^3;q^6)_\infty}\\
&= q^{1/3}\frac{\varphi_1}{(\varphi_2/\varphi_6)(\varphi_3/\varphi_6)}\\
&= q^{1/3}\frac{\varphi_1\varphi_6^2}{\varphi_2\varphi_3} = \frac{\eta_1\eta_6^2}{\eta_2\eta_3}.
\end{align}

We will follow the same process for (3) and (4) to get:
\begin{align}
\sum_{-\infty}^\infty\nu_6^{2n+1}q^{(2n+1)^2/8} &= \nu_6 q^{1/8}\sum_{-\infty}^\infty(q^{1/2})^{n^2}(\nu_6^2q^{1/2})^n\\
&= \nu_6q^{1/8}(\nu_6^5q;q)_\infty(\nu_6;q)_\infty(q;q)_\infty\\
&= q^{1/8}\nu_6(1-\nu_6)(\nu_6^5q;q)_\infty(\nu_6q;q)_\infty(q;q)_\infty\\
&= q^{1/8}\frac{(q^6;q^6)_\infty}{(\nu_6^2q;q)_\infty(\nu_6^3q;q)_\infty(\nu_6^4q;q)_\infty}\\
&= q^{1/8}\frac{(q^6;q^6)_\infty}{(\nu_3q;q)_\infty(-q;q)_\infty(\nu_3^2q;q)_\infty}\\
&= q^{1/8}\frac{\varphi_6}{(\varphi_3/\varphi_1)(\varphi_2/\varphi_1)}\\
&= q^{1/8}\frac{\varphi_1^2\varphi_6}{\varphi_2\varphi_3} = \frac{\eta_1^2\eta_6}{\eta_2\eta_3}
\end{align}
(where in line (23) we use the fact that $\nu_6(1-\nu_6) = 1$) and
\begin{align}
\sum_{n = -\infty}^\infty\nu_6^nq^{n^2} & = (\nu_6^4q;q^2)_\infty(\nu_6^2q;q^2)_\infty(q^2;q^2)_\infty\\
&= (\nu_3^2q;q^2)_\infty(\nu_3q;q^2)_\infty(q^2;q^2)_\infty\\
&= \frac{(q^3;q^6)_\infty}{(q;q^2)_\infty}(q^2;q^2)_\infty\\
&= \frac{\varphi_3/\varphi_6}{\varphi_1/\varphi_2}\varphi_2\\
&= \frac{\varphi_2^2\varphi_3}{\varphi_1\varphi_6} = \frac{\eta_2^2\eta_3}{\eta_1\eta_6}
\end{align}
which completes our proof.
\end{proof}
Note that the third of our observations at the beginning of the previous proof will give us $\phi(q) = \dfrac{\varphi_2^5}{\varphi_1^2\varphi_4^2}$ and $\psi(q) = \dfrac{\varphi_2^2}{\varphi_1}$. One last identity that will be helpful is the following:
\begin{proposition}
\label{a12c12}
$$a(q)a(q^2)+2c(q)c(q^2) = \frac{1}{2}(\theta_3(q^{1/2})^4+\theta_4(q^{1/2})^4).$$
\end{proposition}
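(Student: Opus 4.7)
The plan is to reduce both sides of the identity to a common expression in Ramanujan's Eisenstein series $P(q)$, using the identities already established in Proposition~\ref{CubicThetaProps} and Proposition~\ref{PhiEis}. Since every quantity in sight has a known $P$-expansion, the whole proof should amount to a short bookkeeping computation.

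First I would handle the left-hand side. Identities (13) and (15) of Proposition~\ref{CubicThetaProps} give
\[
a(q)a(q^2) = \tfrac{1}{4}\bigl(6P(q^6) - 3P(q^3) + 2P(q^2) - P(q)\bigr),
\]
\[
c(q)c(q^2) = \tfrac{3}{8}\bigl(-2P(q^6) + P(q^3) + 2P(q^2) - P(q)\bigr),
\]
so that $2c(q)c(q^2)$ is just twice the second expression. Adding these, the $P(q^6)$ terms and the $P(q^3)$ terms cancel in pairs, and the surviving $P(q^2)$ and $P(q)$ contributions collapse to
\[
a(q)a(q^2) + 2c(q)c(q^2) = 2P(q^2) - P(q).
\]

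Next I would handle the right-hand side. Recall $\theta_3(q) = \phi(q)$ and $\theta_4(q) = \phi(-q)$, so Proposition~\ref{PhiEis}, applied with $q$ replaced by $q^{1/2}$, yields
\[
\theta_3(q^{1/2})^4 = \tfrac{1}{3}\bigl(4P(q^2) - P(q^{1/2})\bigr), \qquad \theta_4(q^{1/2})^4 = \tfrac{1}{3}\bigl(P(q^{1/2}) - 6P(q) + 8P(q^2)\bigr).
\]
The $P(q^{1/2})$ terms cancel when these are added, leaving
\[
\tfrac{1}{2}\bigl(\theta_3(q^{1/2})^4 + \theta_4(q^{1/2})^4\bigr) = \tfrac{1}{6}\bigl(12P(q^2) - 6P(q)\bigr) = 2P(q^2) - P(q),
\]
which matches the left-hand side and completes the proof.

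The only thing that could plausibly go wrong is an algebra slip in the coefficient combination, so I would double-check the cancellation of $P(q^6)$ and $P(q^3)$ on the left (they appear with coefficients $6 - 6 = 0$ and $-3 + 3 = 0$ after weighting by $\tfrac14$ and $\tfrac34$ respectively) and the cancellation of $P(q^{1/2})$ on the right. Beyond that, the argument is routine; no combinatorial or analytic difficulty arises because Proposition~\ref{CubicThetaProps} and Proposition~\ref{PhiEis} have already done the substantive work.
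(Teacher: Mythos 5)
Your proof is correct and follows essentially the same route as the paper: both reduce the left-hand side to $2P(q^2)-P(q)$ via Proposition \ref{CubicThetaProps} (13) and (15), and both obtain the same expression for the right-hand side from Proposition \ref{PhiEis} after replacing $q$ by $q^{1/2}$. The only difference is that you substitute $q^{1/2}$ before summing the two Eisenstein-series expansions rather than after, which is immaterial.
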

\begin{proof}
By  Proposition \ref{CubicThetaProps} (13) and (15), we have $a(q)a(q^2)+2c(q)c(q^2) = 2P(q^2)-P(q)$. By Proposition \ref{PhiEis}, $\phi(q)^4+\phi(-q)^4 = \theta_3(q)^4+\theta_4(q)^4 = 4P(q^4)-2P(q^2)$. The result follows by multiplying this last expression by $\frac{1}{2}$ and substituting $q^{1/2}$ for $q$.
\end{proof}

\section{Some Quotient Identities}
In this section we present four $\eta$-quotients and prove some nice identities relating them to Ramanujan's and  the Borweins' cubic theta functions. These identities will be needed later in our character calculation. 
\begin{definition}
Define the following $q$-series: $$w(q)\coloneqq \dfrac{\varphi_2^2\varphi_3^3}{\varphi_1\varphi_6^2}\text{, }x(q)\coloneqq \dfrac{\varphi_1^3\varphi_6^2}{\varphi_2^2\varphi_3}\text{, }y(q)\coloneqq \dfrac{\varphi_1^2\varphi_6^3}{\varphi_2\varphi_3^2}\text{ and }z(q)\coloneqq \dfrac{\varphi_2^3\varphi_3^2}{\varphi_1^2\varphi_6}.$$
\end{definition}


\begin{theorem}
\label{wDissection}
$w(q) = \phi(-q^3)\phi(-q^9)+qx(q^3)$.
\end{theorem}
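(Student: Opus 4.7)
The plan is to recognize the identity as a $3$-dissection that collapses to two of the three residue classes mod $3$. The first step is to factor $w(q)$ so as to isolate a theta-like sum that can be dissected. Using the product form $\phi(-q) = \varphi_1^2/\varphi_2$ (hence $\phi(-q^3) = \varphi_3^2/\varphi_6$), one checks
$$w(q) \;=\; \frac{\varphi_2^2\varphi_3^3}{\varphi_1\varphi_6^2} \;=\; \phi(-q^3)\cdot\frac{\varphi_2^2\varphi_3}{\varphi_1\varphi_6},$$
and by identity (4) of Proposition \ref{JTPids} the remaining $\eta$-quotient has the series expansion $\sum_{n\in\Z}\nu_6^n q^{n^2}$. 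So the task reduces to showing
$$\phi(-q^3)\sum_{n\in\Z}\nu_6^n q^{n^2} \;=\; \phi(-q^3)\phi(-q^9) + qx(q^3).$$

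The main combinatorial step is to dissect $\sum_n \nu_6^n q^{n^2}$ according to $n \bmod 3$. The class $n=3k$ contributes $\sum_k\nu_6^{3k}q^{9k^2} = \sum_k(-1)^kq^{9k^2} = \phi(-q^9)$, since $\nu_6^3 = -1$. For the class $n=3k+2$, I plan to reindex with $k\mapsto -k-1$ so that $(3k+2)^2$ becomes $(3k+1)^2$, which turns the third sum into $-\nu_6^2\sum_k(-1)^k q^{(3k+1)^2}$. Adding this to the second class $\nu_6\sum_k(-1)^k q^{(3k+1)^2}$ produces a coefficient of $\nu_6-\nu_6^2 = 1$, yielding exactly $\sum_k(-1)^k q^{(3k+1)^2}$. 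Thus
$$\sum_{n\in\Z}\nu_6^n q^{n^2} \;=\; \phi(-q^9) + \sum_{k\in\Z}(-1)^k q^{(3k+1)^2}.$$
This cancellation of the two ``off-residue'' classes into a single clean sum via $\nu_6-\nu_6^2=1$ is the crux of the argument; the rest is bookkeeping.

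To convert the remaining sum to an $\eta$-quotient, I will invoke identity (2) of Proposition \ref{JTPids}, which reads $\frac{\eta_1\eta_6^2}{\eta_2\eta_3} = \sum_n(-1)^n q^{(3n+1)^2/3}$. Converting to $\varphi$ (the $q$-exponent is $1/3$) and then substituting $q\mapsto q^3$ gives
$$\sum_{k\in\Z}(-1)^k q^{(3k+1)^2} \;=\; q\,\frac{\varphi_3\varphi_{18}^2}{\varphi_6\varphi_9}.$$
Finally, multiplying the dissection identity through by $\phi(-q^3) = \varphi_3^2/\varphi_6$ produces the right-hand side
$$\phi(-q^3)\phi(-q^9) + q\,\frac{\varphi_3^2}{\varphi_6}\cdot\frac{\varphi_3\varphi_{18}^2}{\varphi_6\varphi_9} \;=\; \phi(-q^3)\phi(-q^9) + q\,\frac{\varphi_3^3\varphi_{18}^2}{\varphi_6^2\varphi_9},$$
and the last quotient is precisely $qx(q^3)$ by the definition of $x$.

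The only place where something could go wrong is the reindexing in the $n\equiv 2 \pmod 3$ class and the computation $\nu_6-\nu_6^2 = 1$; I expect that to be the main obstacle, but it is straightforward once written out carefully. Everything else is a routine verification of $\eta$-quotient exponents, using $\eta_n = q^{n/24}\varphi_n$ to track the $q^{1/3}\to q$ shift when replacing $q$ by $q^3$ in identity (2).
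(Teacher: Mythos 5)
Your proposal is correct and follows essentially the same route as the paper's proof: factor out $\phi(-q^3)$, expand the remaining quotient via Proposition \ref{JTPids}~(4), dissect the sum mod $3$, and convert the off-residue piece via Proposition \ref{JTPids}~(2). The only cosmetic difference is that you write the key coefficient identity as $\nu_6-\nu_6^2=1$ where the paper uses the equivalent $\nu_6+\nu_6^{-1}=1$ after parametrizing the third class as $n=-3k-1$ directly.
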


\begin{proof}
We begin by noting that $w(q) = \phi(-q^3)\dfrac{\eta_2^2\eta_3}{\eta_1\eta_6}$. Now, by Proposition \ref{JTPids} (4) we get that $\displaystyle\frac{\eta_2^2\eta_3}{\eta_1\eta_6} = \sum_{n = -\infty}^\infty\nu_6^nq^{n^2}$. We will split this sum up based on the congruence of the power of $q$ modulo 3. So
\begin{align}
\sum_{n = -\infty}^\infty\nu_6^nq^{n^2} &= \sum_{n\equiv 0\mod 3}\nu_6^nq^{n^2}+\sum_{n\equiv 1\mod 3}\nu_6^nq^{n^2}+\sum_{n\equiv -1\mod 3}\nu_6^nq^{n^2}\\
&= \sum_{k = -\infty}^\infty\nu_6^{3k}q^{9k^2}+\sum_{k = -\infty}^\infty\nu_6^{3k+1}q^{(3k+1)^2}+\sum_{k = -\infty}^\infty\nu_6^{-3k-1}q^{(-3k-1)^2}\\
&=\sum_{k = -\infty}^\infty(-1)^k(q^9)^{k^2}+\sum_{k = -\infty}^\infty\nu_6(-1)^kq^{(3k+1)^2}+\sum_{k = -\infty}^\infty\nu_6^{-1}(-1)^kq^{(3k+1)^2}.
\end{align}
Since $k$ is even if and only if $k^2$ is even, we have $(-1)^k = (-1)^{k^2}$ and so $$\sum_{k = -\infty}^\infty(-1)^k(q^9)^{k^2} = \sum_{k = -\infty}^\infty(-q^9)^{k^2} = \phi(-q^9).$$ Also, note that $\nu_6+\nu_6^{-1} = 1$ and so $$\sum_{k = -\infty}^\infty\nu_6(-1)^kq^{(3k+1)^2}+\sum_{k = -\infty}^\infty\nu_6^{-1}(-1)^kq^{(3k+1)^2} = \sum_{k = -\infty}^\infty(-1)^kq^{(3k+1)^2} = \frac{\eta_3\eta_{18}^2}{\eta_6\eta_9}$$ where the final equality follows from Proposition \ref{JTPids} (2). Putting these results together, we get
\begin{align}
w(q) &= \phi(-q^3)\frac{\eta_2^2\eta_3}{\eta_1\eta_6}\\
&= \phi(-q^3)\left(\phi(-q^9)+\frac{\eta_3\eta_{18}^2}{\eta_6\eta_9}\right)\\
&= \phi(-q^3)\phi(-q^9)+\frac{\varphi_3^2}{\varphi_6}q\frac{\varphi_3\varphi_{18}^2}{\varphi_6\varphi_9}\\
&=\phi(-q^3)\phi(-q^9)+qx(q^3).
\end{align}
\end{proof}

\begin{theorem}
\label{yDissection}
$y(q) = z(q^3)-2q\psi(q^3)\psi(q^9)$.
\end{theorem}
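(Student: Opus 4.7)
The plan is to mimic the proof of Theorem \ref{wDissection}: factor $y(q)$ so that one of the four $\eta$-quotient sum formulas in Proposition \ref{JTPids} applies, then dissect the resulting series by residue classes modulo $3$. Concretely, since $\psi(q^3) = \varphi_6^2/\varphi_3$, I would first write
$$y(q) \;=\; \psi(q^3)\cdot\frac{\varphi_1^2\varphi_6}{\varphi_2\varphi_3}.$$
Proposition \ref{JTPids}(3) expresses $\eta_1^2\eta_6/(\eta_2\eta_3)$ as a sum; converting from $\eta$'s to $\varphi$'s absorbs a factor of $q^{1/8}$ into the exponents and gives
$$\frac{\varphi_1^2\varphi_6}{\varphi_2\varphi_3} \;=\; \sum_{n\in\Z}\nu_6^{2n+1}q^{n(n+1)/2}.$$

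Next, I would split this series by the residue of $n$ modulo $3$. Using $\nu_6^{2n+1}=\nu_6\cdot\nu_3^{n}$, the coefficient is $\nu_6$ for $n\equiv 0\pmod 3$, equals $\nu_6\nu_3 = \nu_6^3 = -1$ for $n\equiv 1\pmod 3$, and equals $\nu_6^{-1}$ for $n\equiv -1\pmod 3$. The crucial observation is that $n(n+1)/2$ is invariant under the involution $n\mapsto -n-1$, and this involution interchanges the classes $n\equiv 0$ and $n\equiv -1\pmod 3$. Hence
$$\sum_{n\equiv 0\,(3)}q^{n(n+1)/2} \;=\; \sum_{n\equiv -1\,(3)}q^{n(n+1)/2} \;=:\; T,$$
so combined with $\nu_6+\nu_6^{-1}=1$ their joint contribution collapses to a single copy of $T$.

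Then the $n\equiv 1\pmod 3$ piece I would compute directly: writing $n=3k+1$, the exponent becomes $(3k+1)(3k+2)/2 = 1 + 9k(k+1)/2$, and the symmetry $k\mapsto -k-1$ gives $\sum_{k\in\Z}q^{9k(k+1)/2}=2\psi(q^9)$, so this subsum is $2q\psi(q^9)$. Combining,
$$\frac{\varphi_1^2\varphi_6}{\varphi_2\varphi_3} \;=\; T - 2q\psi(q^9),$$
and multiplying by $\psi(q^3)$ yields $y(q) = \psi(q^3)T - 2q\psi(q^3)\psi(q^9)$.

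It remains to identify $\psi(q^3)T$ with $z(q^3)$. I would recognize $T=\sum_{k\in\Z}q^{3k(3k+1)/2}$ as the Ramanujan theta function $f(q^3,q^6)$ (the exponents $an(n+1)/2+bn(n-1)/2$ match with $a=6,\,b=3$). The product formula $f(a,b)=(-a;ab)_\infty(-b;ab)_\infty(ab;ab)_\infty$ together with $(-q^a;q^b)_\infty = (q^{2a};q^{2b})_\infty/(q^a;q^b)_\infty$ then simplifies $f(q^3,q^6)$ to $\varphi_6\varphi_9^2/(\varphi_3\varphi_{18})$, so $\psi(q^3)\,f(q^3,q^6) = \varphi_6^3\varphi_9^2/(\varphi_3^2\varphi_{18}) = z(q^3)$, finishing the proof. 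The only real obstacle is bookkeeping: tracking the $q^{1/8}$ shift in the $\eta$-to-$\varphi$ conversion, keeping the powers of $\nu_6$ straight in the dissection, and recognizing the leftover sum as a Ramanujan theta function so that the product formula can be applied.
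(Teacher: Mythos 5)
Your proposal is correct and follows essentially the same route as the paper: factor out $\psi(q^3)$, expand the remaining quotient via Proposition \ref{JTPids}(3), dissect the resulting series modulo $3$, use $\nu_6+\nu_6^{-1}=1$ to collapse the $n\equiv 0$ and $n\equiv -1$ classes, and extract $-2q\psi(q^9)$ from the $n\equiv 1$ class. The only (cosmetic) divergence is the last step, where the paper completes the square and cites Proposition \ref{JTPids}(1) to identify the leftover sum with $\varphi_6\varphi_9^2/(\varphi_3\varphi_{18})$, while you recognize it as $f(q^3,q^6)$ and apply Ramanujan's product formula --- both are the same Jacobi triple product computation in different clothing.
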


\begin{proof}
We will again begin by splitting our quotient: $y(q) = \psi(q^3)q^{-3/24}\dfrac{\eta_1^2\eta_6}{\eta_2\eta_3}$. By Proposition \ref{JTPids} (3), $\displaystyle q^{-3/24}\frac{\eta_1^2\eta_6}{\eta_2\eta_3} = q^{-1/8}\sum_{n = -\infty}^\infty\nu_6^{2n+1}q^{(2n+1)^2/8} = \sum_{n = -\infty}^\infty\nu_6^{2n+1}q^{(n^2+n)/2}$. As before, we will break this sum up by congruence class of $n$ modulo 3:
\begin{align}
\sum_{n = -\infty}^\infty\nu_6^{2n+1}q^{(n^2+n)/2} =& \sum_{n\equiv 0\mod 3}\nu_6^{2n+1}q^{(n^2+n)/2}+\sum_{n\equiv 1\mod 3}\nu_6^{2n+1}q^{(n^2+n)/2}\nonumber\\
&+\sum_{n\equiv -1\mod 3}\nu_6^{2n+1}q^{(n^2+n)/2}\\
=& \sum_{k = -\infty}^\infty\nu_6^{2(3k)+1}q^{((3k)^2+3k)/2}+\sum_{k = -\infty}^\infty\nu_6^{2(3k+1)+1}q^{((3k+1)^2+3k+1)/2}\nonumber\\
&+\sum_{k = -\infty}^\infty\nu_6^{2(-3k-1)+1}q^{((-3k-1)^2-3k-1)/2}\\
=& \sum_{k = -\infty}^\infty\nu_6^{6k+1}q^{(9k^2+3k)/2}+\sum_{k = -\infty}^\infty\nu_6^{6k+3}q^{(9k^2+9k+2)/2}\nonumber\\
&+\sum_{k = -\infty}^\infty\nu_6^{-6k-1}q^{(9k^2+3k)/2}.
\end{align}
Now since $\nu_6^{6k+3} = -1$ for all $k\in\Z$, we have $$\sum_{k = -\infty}^\infty\nu_6^{6k+3}q^{(9k^2+9k+2)/2} = -q\sum_{k = -\infty}^\infty(q^9)^{k(k+1)/2} = -2q\psi(q^9).$$ Further, $\nu_6^{6k+1} = \nu_6$ and $\nu_6^{-6k-1} = \nu_6^{-1}$ for all $k\in\Z$, so
\begin{align}
\sum_{k = -\infty}^\infty\nu_6^{6k+1}q^{(9k^2+3k)/2}&+\sum_{k = -\infty}^\infty\nu_6^{6k+3}q^{(9k^2+9k+2)/2}+\sum_{k = -\infty}^\infty\nu_6^{-6k-1}q^{(9k^2+3k)/2}\\
&= (\nu_6+\nu_6^{-1})\sum_{k = -\infty}^\infty q^{(9k^2+3k)/2}\\
&= \sum_{k = -\infty}^\infty q^{\frac{9k^2+3k}{2}\cdot3\cdot\frac{4}{12}}\\
&= \sum_{k = -\infty}^\infty q^{3(36k^2+12k)/24}\\
&= q^{-3/24}\sum_{k = -\infty}^\infty q^{3(36k^2+12k+1)/24}\\
&= q^{-3/24}\sum_{k = -\infty}^\infty q^{3(6k+1)^2/24}\\
&= q^{-3/24}\frac{\eta_6\eta_9^2}{\eta_3\eta_{18}}\\
&= \frac{\varphi_6\varphi_9^2}{\varphi_3\varphi_{18}}
\end{align}
where line 49 follows from 48 by Proposition \ref{JTPids} (1). Putting everything together, we get:
\begin{equation}
    y(q) = \psi(q^3)\left(\frac{\varphi_6\varphi_9^2}{\varphi_3\varphi_{18}}-2q\psi(q^9)\right) = \frac{\varphi_6^2}{\varphi_3}\frac{\varphi_6\varphi_9^2}{\varphi_3\varphi_{18}}-2q\psi(q^3)\psi(q^9) = z(q^3)-2q\psi(q^3)\psi(q^9)
\end{equation}
\end{proof}

\begin{lemma}
\label{ellipse}
Let $f(x,y) = 2x^2+2xy+2y^2+x+y$ and $i, n\in\Z$. Then there exists $j\in\Z$ such that $f(i,j) = n$ if and only if there exists $k\in\Z$ such that $f(i,k+\frac{1}{2}) = n.$
\end{lemma}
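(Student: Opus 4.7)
The plan is to interpret the problem as a statement about the roots of a single quadratic in $y$. Fixing $i$ and $n$, the condition $f(i,y) = n$ becomes
\[
2y^2 + (2i+1)y + (2i^2 + i - n) = 0,
\]
a quadratic in $y$ with two (possibly complex, possibly repeated) roots $y_1, y_2$. The statement "there exists $j \in \mathbb{Z}$ with $f(i,j) = n$" just says that one of $y_1, y_2$ is an integer, and "there exists $k \in \mathbb{Z}$ with $f(i, k+\tfrac12) = n$" says that one of $y_1, y_2$ lies in $\mathbb{Z} + \tfrac12$. So the lemma is really a statement about the two roots of this quadratic.

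The key observation is Vieta's formula for the sum of the roots:
\[
y_1 + y_2 = -\frac{2i+1}{2} = -i - \frac{1}{2}.
\]
This sum is itself a half-integer (of the form "integer $+ \tfrac12$"). From this, the equivalence is immediate: if $y_1 = j \in \mathbb{Z}$, then
\[
y_2 = -i - \tfrac12 - j = (-i - j - 1) + \tfrac12,
\]
so $y_2 = k + \tfrac12$ with $k = -i - j - 1 \in \mathbb{Z}$. Conversely, if $y_1 = k + \tfrac12$ with $k \in \mathbb{Z}$, then $y_2 = -i - k - 1 \in \mathbb{Z}$. Thus an integer root exists if and only if a half-integer root exists, and the map $j \leftrightarrow k = -i-j-1$ gives an explicit pairing of the two types of solutions.

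There really is no main obstacle here; the only substantive move is recognizing that $y_1 + y_2$ is a half-integer, which forces the two roots to be of complementary type (one integer, one half-integer) whenever either type occurs. The rest is a one-line substitution. In the write-up I would state the quadratic explicitly, invoke Vieta, and then display the two substitutions $k = -i-j-1$ and $j = -i-k-1$ that witness each direction of the equivalence.
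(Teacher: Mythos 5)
Your proof is correct. You and the paper both reduce the lemma to the two roots $y_1,y_2$ of the quadratic $2y^2+(2i+1)y+(2i^2+i-n)=0$, but your execution is cleaner: the paper computes the roots explicitly via the quadratic formula, writes them as $\frac{1}{4}(a(i)\pm b(i))$ with $b(i)$ a square root, and then runs a case analysis on the residues of $a(i)$ and $b(i)$ modulo $4$ to show that one root is an integer exactly when the other lies in $\Z+\frac{1}{2}$. Your observation that Vieta gives $y_1+y_2=-i-\frac{1}{2}$, a half-integer, makes that conclusion immediate and yields the explicit pairing $j\leftrightarrow k=-i-j-1$ without any congruence bookkeeping or reference to the discriminant. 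The only point worth making explicit in a write-up is that if one root of the quadratic is real (which it is, being an integer or half-integer by hypothesis), the other root is automatically real as well, so the substitution $y_2=-i-\frac{1}{2}-y_1$ always produces a genuine second solution of $f(i,y)=n$; this is trivially true but should be said. Your approach buys brevity and an explicit bijection between the two solution sets (which is in fact what Lemma \ref{EllipticSeries} needs downstream), at no cost in generality.
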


\begin{proof}
We will rearrange $f(x,y)=n$ to obtain $2y^2+(2x+1)y+(2x^2+x-n)=0$ and use the quadratic formula to find a formula for $y$ in terms of $x$. This gives us:
$$y = \frac{-(2x+1)\pm\sqrt{(2x+1)^2-4(2)(2x^2+x-n)}}{4}.$$
Let $a(x) = -(2x+1)$ and $b(x) = \sqrt{(2x+1)^2-4(2)(2x^2+x-n)}$ and set $y_1(x) = \frac{1}{4}(a(x)+b(x))$ and $y_2(x) = \frac{1}{4}(a(x)-b(x))$. Now for $i\in\Z$, we have $a(i)$ is odd and thus either $a(i)\equiv 1\mod 4$ or $a(i)\equiv 3\mod 4$. 

Then $y_1(i)\in\Z$ if and only if $a(i)+b(i)\in 4\Z$. This is true if and only if either $a(i)\equiv 1\mod 4$ and $b(i)\equiv 3\mod 4$ or $a(i)\equiv 3\mod 4$ and $b(i)\equiv 1\mod 4$. Both of these cases are equivalent to saying that $a(i)-b(i)\equiv 2\mod 4$, which is true if and only if $y_2(i)\in\Z+\frac{1}{2}.$

Similarly, $y_2(i)\in\Z$ if and only if $a(i)-b(i)\in 4\Z$. This is true if and only if either $a(i)\equiv 1\mod 4$ and $b(i)\equiv 1\mod 4$ or $a(i)\equiv 3\mod 4$ and $b(i)\equiv 3\mod 4$. Both of these cases are equivalent to saying that $a(i)+b(i)\equiv 2\mod 4$, which is true if and only if $y_1(i)\in\Z+\frac{1}{2}.$ Our claim now follows immediately.
\end{proof}

\begin{lemma}
\label{EllipticSeries}
Let $f(x,y)$ be as above and let $g(x,y) = 2x^2+2xy+2y^2+2x+3y+1$. Then $\sum_{i,j\in\Z}q^{f(a,b)} = \sum_{i,j\in\Z}q^{g(a,b)}$.
\end{lemma}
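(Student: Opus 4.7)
The plan is to reduce this to the pointwise statement of Lemma \ref{ellipse} via a simple change of variables. First I would verify the elementary algebraic identity
$$f(x, y + \tfrac{1}{2}) = 2x^2 + 2x(y+\tfrac{1}{2}) + 2(y+\tfrac{1}{2})^2 + x + (y+\tfrac{1}{2}) = g(x, y),$$
which is the linchpin connecting the two quadratic forms: shifting $y$ by $\tfrac{1}{2}$ converts $f$ into $g$. Consequently,
$$\sum_{i,k \in \Z} q^{g(i,k)} \;=\; \sum_{i,k \in \Z} q^{f(i,\, k + 1/2)},$$
so it suffices to prove that for each integer $i$ and each integer $n$, the number of $j \in \Z$ with $f(i,j) = n$ equals the number of $k \in \Z$ with $f(i, k + \tfrac{1}{2}) = n$.

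Next I would fix $i, n \in \Z$ and examine the quadratic $2y^2 + (2i+1)y + (2i^2 + i - n) = 0$ in $y$, whose two roots are the $y_1(i), y_2(i)$ from the proof of Lemma \ref{ellipse}. The plan is to upgrade the existence statement of Lemma \ref{ellipse} to a multiplicity statement: from the congruence analysis there, $y_1(i) \in \Z$ if and only if $y_2(i) \in \Z + \tfrac{1}{2}$, and $y_2(i) \in \Z$ if and only if $y_1(i) \in \Z + \tfrac{1}{2}$. Hence the two roots split cleanly — whenever one of them is an integer, the other is a half-integer (and vice versa), and the same root cannot be both. I would then enumerate the cases:
\begin{enumerate}[(a)]
    \item If the roots are non-real, both counts are $0$.
    \item If the discriminant vanishes, then $y_1 = y_2 = -(2i+1)/4$, which has denominator $4$ and so lies in neither $\Z$ nor $\Z + \tfrac{1}{2}$; both counts are $0$.
    \item If the roots are real and distinct, the bijection above forces the number of integer roots to equal the number of half-integer roots (each is $0$ or $1$).
\end{enumerate}
In every case the integer-count equals the half-integer-count.

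Finally I would sum these equalities over all $i$ and collect by $n$: the coefficient of $q^n$ on the left-hand side is $\sum_i \#\{j \in \Z : f(i,j) = n\}$, and on the right-hand side is $\sum_i \#\{k \in \Z : f(i, k + \tfrac{1}{2}) = n\} = \sum_i \#\{k \in \Z : g(i,k) = n\}$, and these are equal term-by-term. The main obstacle — really the only subtlety — is verifying that Lemma \ref{ellipse}'s existence dichotomy actually matches multiplicities, including the degenerate repeated-root and complex-root cases; once that is handled the proof is a direct bookkeeping argument with no further calculation.
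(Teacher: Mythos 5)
Your proposal follows the paper's proof: both rest on the identity $g(x,y)=f(x,y+\frac{1}{2})$ and on Lemma \ref{ellipse} to match, coefficient by coefficient of $q^n$, the integer solutions of $f(i,\cdot)=n$ with the half-integer ones. If anything, your per-$i$ root count with the explicit non-real and repeated-root cases is more careful than the paper's argument, which passes from the existence statement of Lemma \ref{ellipse} directly to $|A(n)|\leq|B(n)|$ without addressing multiplicities; your fiber-by-fiber bookkeeping closes exactly that gap.
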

\begin{proof}
First, note that for any integer $z\in\Z$, we have $z^2+z\geq 0$. Then $f(x,y) = (x+y)^2+(x^2+x)+(y^2+y)\geq 0$ for all $x,y\in\Z$. Similarly, $g(x,y) = [(x+y)^2+(x+y)]+(x^2+x)+(y+1)^2\geq 0$. Now let $A(n) = \{(i,j)\in\Z^2\mid f(i,j) = n\}$ and $B(n) = \{(i,j)\in\Z^2\mid g(i,j) = n\}$. Then $\displaystyle\sum_{i,j\in\Z}q^{f(a,b)} = \sum_{n = 0}^\infty|A(n)|q^n$ and $\displaystyle\sum_{i,j\in\Z}q^{g(a,b)} = \sum_{n = 0}^\infty|B(n)|q^n$. We will be done if we can show $|A(n)| = |B(n)|$ for all nonnegative $n\in\Z$. By lemma \ref{ellipse}, we have that if $(i,j)\in A(n)$ then there exists $k\in\Z$ such that $f(i,k+\frac{1}{2}) = n$. But since $g(x,y) = f(x,y+\frac{1}{2})$, we have that $f(i,k+\frac{1}{2}) = g(i,k) = n$. This implies that for any $(i,j)\in A(n)$, there exists $k\in\Z$ such that $(i,k)\in B(n)$. Thus, $|A(n)|\leq|B(n)|$. Similarly, if $(i,k)\in B(n)$ then $g(i,k) = f(i,k+\frac{1}{2}) = n$. By lemma \ref{ellipse}, there exists $j\in\Z$ such that $f(i,j) = n$ and so $|B(n)|\leq|A(n)|$. Therefore, $|A(n)| = |B(n)|$.
\end{proof}

\begin{lemma}
\label{zSum}
$$\sum_{i,j\in\Z}q^{f(x,y)} = z(q).$$
\end{lemma}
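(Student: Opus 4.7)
The plan is to use Lemma \ref{EllipticSeries} to double the theta sum, and then evaluate the resulting sums using Jacobi's triple product. The key observation is that $g(x,y) = f(x, y+\frac{1}{2})$, so Lemma \ref{EllipticSeries} asserts that the theta series is unchanged when every $y$-index is shifted by $\frac{1}{2}$. Adding the sums over $y \in \Z$ and over $y \in \Z + \frac{1}{2}$ then gives
\[
2\sum_{x,y\in\Z} q^{f(x,y)} \;=\; \sum_{x\in\Z,\; y\in\frac{1}{2}\Z} q^{f(x,y)}.
\]
Writing $y = m/2$ with $m\in\Z$ converts the right-hand side into a sum over $\Z^2$ whose exponent is $f(x, m/2) = 2x^2 + x + mx + \frac{m(m+1)}{2}$.

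Next, I would evaluate the inner $m$-sum for fixed $x$. Completing the square in $m$ produces the exponent $\frac{(2m+2x+1)^2}{8} - \frac{(2x+1)^2}{8}$, and as $m$ runs over $\Z$ the integer $n := 2m+2x+1$ runs over all odd integers. The odd-square theta series $\sum_{n\,\text{odd}} q^{n^2/8}$ equals $2q^{1/8}\psi(q)$: write $n = 2k+1$ and use $(2k+1)^2/8 = k(k+1)/2 + 1/8$ together with the $k \leftrightarrow -k-1$ symmetry of the triangular numbers to reduce to $2\sum_{k\ge 0} q^{k(k+1)/2}$. After pulling the $x$-dependent factor out of the exponent, the inner sum becomes $2\psi(q)\,q^{-x(x+1)/2}$, and the doubled theta series reduces to
\[
2\psi(q)\sum_{x\in\Z} q^{2x^2 + x - x(x+1)/2} \;=\; 2\psi(q)\sum_{x\in\Z} q^{x(3x+1)/2}.
\]

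Finally, I would recognize the remaining sum via Proposition \ref{JTPids}(1). Since $x(3x+1)/2 = \bigl((6x+1)^2 - 1\bigr)/24$, clearing the $q^{-1/24}$ factor from $\eta_n = q^{n/24}\varphi_n$ converts Proposition \ref{JTPids}(1) into the identity $\sum_{x\in\Z} q^{x(3x+1)/2} = \frac{\varphi_2 \varphi_3^2}{\varphi_1 \varphi_6}$. Combining this with the product identity $\psi(q) = \varphi_2^2/\varphi_1$ recorded immediately after Proposition \ref{JTPids}, I conclude
\[
2\sum_{x,y\in\Z} q^{f(x,y)} \;=\; 2 \cdot \frac{\varphi_2^2}{\varphi_1}\cdot\frac{\varphi_2 \varphi_3^2}{\varphi_1 \varphi_6} \;=\; 2\cdot\frac{\varphi_2^3 \varphi_3^2}{\varphi_1^2 \varphi_6} \;=\; 2\,z(q),
\]
which yields the lemma after dividing by $2$. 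I do not anticipate any serious obstacle: the half-integer shift is precisely what Lemma \ref{EllipticSeries} hands us for free, and after that the computation is just careful bookkeeping with standard Jacobi-triple-product identities already collected in the chapter.
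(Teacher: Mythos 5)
Your proof is correct, and while it rests on the same two pillars as the paper's argument --- Lemma \ref{EllipticSeries} to handle the half-integer shift, and the factorization $z(q)=\psi(q)\cdot\frac{\varphi_2\varphi_3^2}{\varphi_1\varphi_6}$ via Proposition \ref{JTPids}(1) --- it runs in the opposite direction and with different mechanics. The paper starts from $z(q)$, writes it as the double sum $\frac{1}{2}\sum_{m,n}q^{\frac{1}{2}m^2+\frac{1}{2}m+\frac{3}{2}n^2+\frac{1}{2}n}$, applies the rotation $x=m+n$, $y=m-n$ to land on a quadratic form over $\frac{1}{2}\Z^2$ with a parity constraint, and then needs several further substitutions (and two separate invocations of Lemma \ref{EllipticSeries}) to identify both the integer and half-integer pieces with $\sum q^{f(x,y)}$. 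You instead start from $\sum q^{f(x,y)}$, observe that $g(x,y)=f(x,y+\frac{1}{2})$ turns Lemma \ref{EllipticSeries} into the clean statement that the $y$-sum is invariant under the shift $y\mapsto y+\frac{1}{2}$, double the series accordingly, and then diagonalize by completing the square in $m$ where $y=m/2$: the exponent splits into an odd-square part giving $2q^{1/8}\psi(q)$ and a pentagonal-type part $x(3x+1)/2$ giving $\frac{\varphi_2\varphi_3^2}{\varphi_1\varphi_6}$, and the factor of $2$ cancels. I checked the arithmetic ($f(x,m/2)=2x^2+x+mx+\frac{m(m+1)}{2}$, the completed square $\frac{(2m+2x+1)^2}{8}-\frac{(2x+1)^2}{8}$, and $2x^2+x-\frac{x(x+1)}{2}=\frac{x(3x+1)}{2}$) and it all holds; absolute convergence justifies the rearrangements. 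Your route uses Lemma \ref{EllipticSeries} exactly once and in the form most natural to its statement, at the cost of a slightly less symmetric change of variables; the paper's route keeps the two one-dimensional theta series visible from the outset but pays for it with a longer bookkeeping phase.
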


\begin{proof}
First, note that
\begin{align}
z(q) &= \frac{\varphi_2^2}{\varphi_1}\cdot\frac{q^{-1/24}\eta_2\eta_3^2}{\eta_1\eta_6}\\
&= \frac{1}{2}\left(\sum_{m\in\Z}q^{m(m+1)/2}\right)q^{-1/24}\left(\sum_{n\in\Z}q^{(6n+1)^2/24}\right)\\
&= \frac{1}{2}q^{-1/24}\sum_{m,n\in\Z}q^{\frac{1}{2}m^2+\frac{1}{2}m+\frac{36}{24}n^2+\frac{12}{24}n+\frac{1}{24}}\\
&= \frac{1}{2}\sum_{m,n\in\Z}q^{\frac{1}{2}m^2+\frac{1}{2}m+\frac{3}{2}n^2+\frac{1}{2}n}.
\end{align}
Now we will make the substitution $x = \frac{1}{2}(m+n)$ and $y =\frac{1}{2}(m-n)$. Note that this is an invertible system of linear equations and so we get $m = x+y$ and $n = x-y$ with $x$ and $y$ ranging over $\frac{1}{2}\Z$ such that $x\pm y\in\Z$. So,
\begin{align}
\frac{1}{2}\sum_{m,n\in\Z}q^{\frac{1}{2}m^2+\frac{1}{2}m+\frac{3}{2}n^2+\frac{1}{2}n} &= \frac{1}{2}\sum_{\substack{x,y\in\frac{1}{2}\Z\\ x\pm y\in\Z}}q^{2x^2-2xy+2y^2+x}\\
&= \frac{1}{2}\left(\sum_{x,y\in\Z}q^{2x^2-2xy+2y^2+x}+\sum_{x,y\in\Z+\frac{1}{2}}q^{2x^2-2xy+2y^2+x}\right)\\
&= \frac{1}{2}\left(\sum_{x,y\in\Z}q^{2x^2-2xy+2y^2+x}+\sum_{x,y\in\Z}q^{2x^2-2xy+2y^2+2x+y+1}\right).
\end{align}
Now, notice that
\begin{align}
\sum_{\substack{i,j\in\Z\\i\text{ even}\\j\text{ odd}}}q^{(i^2+ij+j^2+i+j)/2} &= \sum_{m,n\in\Z}q^{2m^2+2mn+2n^2+2m+3n+1}\\ 
&= \sum_{m,n\in\Z}q^{g(m,n)}
\end{align}
where we made the substitutions $i = 2m$ and $j = 2n+1$. But we also have that
$$\sum_{\substack{i,j\in\Z\\i\text{ even}\\j\text{ odd}}}q^{(i^2+ij+j^2+i+j)/2} = \sum_{m,n\in\Z}q^{2m^2-2mn+2n^2+m}$$
where we made the substitutions $i = -2n$ and $j = 2m-1$. So we must have that $$\sum_{x,y\in\Z}q^{2x^2-2xy+2y^2+x} = \sum_{x,y\in\Z}q^{g(x,y)} = \sum_{x,y\in\Z}q^{f(x,y)}$$ by line 60 and lemma \ref{EllipticSeries}. Additionally,
\begin{align}
\sum_{i,j\in\Z}q^{g(i,j)} &= \sum_{i,j\in\Z}q^{2i^2+2ij+2j^2+2i+3j+1}\\
&= \sum_{k,j\in\Z}q^{2k^2-2kj+2j^2-2k+3j+1}\\
&= \sum_{l,j\in\Z}q^{2l^2-2lj+2j^2+2l+j+1}
\end{align}
where we made the substitutions $i = -k$ and $k = l+1$. So then $$\sum_{x,y\in\Z}q^{2x^2-2xy+2y^2+x} = \sum_{x,y\in\Z}q^{f(x,y)}$$ by lemma \ref{EllipticSeries}. Putting all of these results together, we have:
\begin{align}
\frac{1}{2}\left(\sum_{x,y\in\Z}q^{2x^2-2xy+2y^2+2x+y+1}+\sum_{x,y\in\Z}q^{2x^2-2xy+2y^2+2x+y+1}\right) &= \frac{1}{2}\left(\sum_{x,y\in\Z}q^{f(x,y)}+\sum_{x,y\in\Z}q^{f(x,y)}\right)\\
&= \sum_{x,y\in\Z}q^{f(x,y)}
\end{align}
which was what we wanted.
\end{proof}

\begin{theorem}
\label{xDissection}
$x(q) = z(q^2)-q^{-\frac{1}{3}}c(q^4)$
\end{theorem}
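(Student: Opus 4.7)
The strategy parallels the proofs of Theorems \ref{wDissection} and \ref{yDissection}, but here involves a \emph{double} theta series that we dissect by the parity of the sum of the two summation indices. The goal is to write $x(q)$ as a double lattice sum that naturally splits into the two pieces appearing on the right-hand side.

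The first step is to factor $x(q) = \dfrac{\varphi_1^2}{\varphi_2}\cdot\dfrac{\varphi_1\varphi_6^2}{\varphi_2\varphi_3}$. The left factor is $\phi(-q) = \sum_{m\in\Z}(-1)^m q^{m^2}$. For the right factor, applying Proposition \ref{JTPids} (2) and tracking the $q^{k/24}$ powers coming from $\eta_k = q^{k/24}\varphi_k$ yields $\dfrac{\varphi_1\varphi_6^2}{\varphi_2\varphi_3} = q^{-1/3}\sum_{n\in\Z}(-1)^n q^{(3n+1)^2/3}$. Multiplying these series and using $(3n+1)^2/3 = 3n^2+2n+\tfrac13$ produces the double series
\[
x(q) = \sum_{m,n\in\Z}(-1)^{m+n}q^{m^2+3n^2+2n}.
\]

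The key step is to dissect this sum by the parity of $m+n$. When $m+n$ is even, substitute $m = a+b$ and $n = a-b$ with $a,b\in\Z$; the exponent becomes $4(a^2-ab+b^2)+2(a-b)$, and the further change $j = -b$ rewrites it as $4(a^2+aj+j^2)+2(a+j)$. Lemma \ref{zSum} gives $z(q) = \sum_{i,j\in\Z}q^{2(i^2+ij+j^2)+(i+j)}$, so after replacing $q$ by $q^2$ this shows the even-parity contribution equals $z(q^2)$.

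When $m+n$ is odd, substitute $m = a+b+1$ and $n = a-b$ with $a,b\in\Z$; a direct expansion yields exponent $4(a^2-ab+b^2)+4a+1$, that is, $4(a^2+aj+j^2)+4a+1$ after $j = -b$. From the series definition of the Borweins' function $c$, one computes $q^{-1/3}c(q^4) = \sum_{m,n\in\Z}q^{4(m^2+mn+n^2)+4(m+n)+1}$. The linear forms $4a$ and $4(a+j)$ are not identical as integrands, but the $A_2$-lattice symmetry $(a,j)\mapsto(a+j,-j)$ preserves the quadratic form $Q(a,j) = a^2+aj+j^2$ while sending $a+j$ to $a$, so the two lattice sums agree. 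The odd-parity contribution therefore equals $-q^{-1/3}c(q^4)$, and summing the two parity classes gives the claimed identity. The main subtlety is justifying this $A_2$ symmetry in the odd case; the even case is a direct appeal to Lemma \ref{zSum}, and all other manipulations are routine bookkeeping of exponents.
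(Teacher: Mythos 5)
Your proof is correct, and it takes a genuinely different route from the paper's. You convert $x(q)$ into the explicit double theta series $\sum_{m,n\in\Z}(-1)^{m+n}q^{m^2+3n^2+2n}$ (via $\phi(-q)=\varphi_1^2/\varphi_2$ and Proposition \ref{JTPids}(2)) and dissect it by the parity of $m+n$: the even class, under $m=a+b$, $n=a-b$, $j=-b$, becomes exactly the lattice sum of Lemma \ref{zSum} with $q$ replaced by $q^2$, and the odd class, under $m=a+b+1$, $n=a-b$, becomes $\sum q^{4(a^2+aj+j^2)+4a+1}$, which agrees with $q^{-1/3}c(q^4)=\sum q^{4(m^2+mn+n^2)+4(m+n)+1}$ because $(a,j)\mapsto(a+j,-j)$ is a determinant $-1$ automorphism of $\Z^2$ preserving $a^2+aj+j^2$ and sending the linear form $a+j$ to $a$. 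I checked the exponent computations and the sign bookkeeping, and they are all right, so there is no gap. The paper instead writes $x(q)=(\varphi_6^2/\varphi_2^2)\,b(q)$ and $2$-dissects the Borwein function $b$ itself, using $b(q)+b(-q)=2b(q^4)$, $b(q)=a(q^3)-c(q^3)$, the evaluation $\even(q^{-1/3}c(q))=3z(q^2)$, and Theorem \ref{yDissection} to identify $\odd(b(q))=-3qy(q^2)$. Your argument is shorter and more self-contained, needing only Proposition \ref{JTPids}(2) and Lemma \ref{zSum} while bypassing Proposition \ref{CubicThetaProps}(9),(11),(12), Lemma \ref{EllipticSeries}, and Theorem \ref{yDissection}; the paper's version, in exchange, produces the intermediate identity $\odd(b(q))=-3qy(q^2)$ and ties the $w$-, $y$-, and $x$-dissections together into one interlocking family.
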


\begin{proof}
We will proceed similarly to the proofs of our claims about $w(q)$ and $y(q)$. First we note that $$x(q) = \frac{\varphi_6^2}{\varphi_2^2}b(q)$$ (which follows from Proposition \ref{CubicThetaProps} (2)) and proceed by splitting $b(q)$ by odd and even powers of $q$. To isolate even powers of $q$ in $b(q)$ we will calculate $\frac{1}{2}(b(q)+b(-q))$, but this equals $b(q^4)$ by Proposition \ref{CubicThetaProps} (9). To isolate odd powers of $q$ in $b(q)$, we first use Proposition \ref{CubicThetaProps} (11) to write $b(q) = a(q^3)-c(q^3)$. To extract terms with odd powers of $q$ in $b(q)$, we will extract the terms with odd powers of $q$ from both $a(q^3)$ and $c(q^3)$. For $a(q^3)$, we note that $\frac{1}{2}(a(q)-a(-q)) = 6q\psi(q^2)\psi(q^6)$ by Proposition \ref{CubicThetaProps} (12) and so $\frac{1}{2}(a(q^3)-a(-q^3)) = 6q^3\psi(q^6)\psi(q^{18})$.
For ease of notation, we will use the operators $\odd()$ and $\even()$ to denote the isolation of odd and even terms of a given series respectively. Let $d(q) = q^{-1/3}c(q)$. Then $d(q^3) = q^{-1}c(q^3)$ and thus $\odd(c(q^3)) = q\even(d(q^3))$. But $\even(d(q^3)) = [\even(d(x))]_{x = q^3}$, which reduces our problem to isolating the even terms of $d(q)$. By definition, $\displaystyle d(q) = \sum_{i,j\in \Z}q^{i^2+ij+j^2+i+j}$ and $i^2+ij+j^2+i+j$ will be even if and only if at most one of $i$ or $j$ is odd. That is, $$\even(d(q)) = \sum_{\substack{i,j\in\Z \\ i,j\text{ even}}}q^{i^2+ij+j^2+i+j}+\sum_{\substack{i,j\in\Z \\ i\text{ even, }j\text{ odd}}}q^{i^2+ij+j^2+i+j}+\sum_{\substack{i,j\in\Z \\ i\text{ odd, }j\text{ even}}}q^{i^2+ij+j^2+i+j}.$$ Noting symmetry in the last two summations and substituting $i = 2m$, $j = 2n$ and $i = 2m$, $j = 2n+1$ in the first and second summation respectively, we have 
\begin{align}
\even(d(q)) &= \sum_{m,n\in\Z}q^{4m^2+4mn+4n^2+2m+2n}+2\sum_{m,n\in\Z}q^{4m^2+4mn+4n^2+2m+6n+2}\\ 
&= \left[\sum_{m,n\in\Z}x^{f(m,n)}+2\sum_{m,n\in\Z}x^{g(m,n)}\right]_{x = q^2}.
\end{align}
But by lemma \ref{EllipticSeries} and lemma \ref{zSum}, line 67 equals: $$\left[3\sum_{m,n\in\Z}x^{f(m,n)}\right]_{x = q^2} = [3z(x)]_{x = q^2} = 3z(q^2).$$ Therefore,
\begin{align}
\odd(b(q)) &= \odd(a(q^3))-\odd(c(q^3))\\
&= 6q^3\psi(q^6)\psi(q^{18})-q\even(d(q^3))\\
&= 6q^3\psi(q^6)\psi(q^{18})-q[\even(d(x))]_{x = q^3}\\
&= 6q^3\psi(q^6)\psi(q^{18})-q[3z(x^2)]_{x = q^3}\\
&= 6q^3\psi(q^6)\psi(q^{18})-3qz(q^6)\\
&= -3qy(q^2)
\end{align}
where line 73 follows from 72 by Theorem \ref{yDissection}. Putting our results together, we have $$x(q) = \frac{\varphi_6^2}{\varphi_2^2}b(q) = \frac{\varphi_6^2}{\varphi_2^2}(b(q^4)-3qy(q^2)) = \frac{\varphi_6^2}{\varphi_2^2}\cdot\frac{\varphi_4^3}{\varphi_{12}}-\frac{\varphi_6^2}{\varphi_2^2}\cdot\frac{3q\varphi_2^2\varphi_{12}^3}{\varphi_4\varphi_6^2} = z(q^2)-3q\frac{\varphi_{12}^3}{\varphi_4}$$
$$=  z(q^2)-3q^{-1/3}c(q^4)$$ which was what we wanted.
\end{proof}

\begin{corollary}
\label{zDissection}
$$q^{1/6}z(q) = \frac{1}{3}(c(q^{1/2})-c(q^2)).$$
\end{corollary}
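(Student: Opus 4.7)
The plan is to prove the identity by dissecting $c(q^{1/2})$ according to the parity of a lattice index, extracting $3q^{1/6}z(q)$ from the even portion and $c(q^2)$ from the odd portion, and then rearranging.

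Write $d(q) = q^{-1/3}c(q) = \sum_{m,n\in\Z} q^{h(m,n)}$, where $h(m,n) := m^2 + mn + n^2 + m + n$ is the quadratic form used in the proof of Theorem~\ref{xDissection}. Then $c(q^{1/2}) = q^{1/6}d(q^{1/2}) = q^{1/6}\sum_{m,n\in\Z} q^{h(m,n)/2}$. Since $m^2 + mn + n^2 + m + n \equiv mn \pmod 2$, I would split the sum by the parity of $mn$. For the ``$mn$ even'' portion I would invoke the subidentity $\sum_{mn\text{ even}} q^{h(m,n)} = \even(d(q)) = 3z(q^2)$, established explicitly during the proof of Theorem~\ref{xDissection} via Lemmas~\ref{EllipticSeries} and~\ref{zSum}. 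Substituting $q \mapsto q^{1/2}$ converts this into $\sum_{mn\text{ even}} q^{h(m,n)/2} = 3z(q)$, so that the even part contributes precisely $3q^{1/6}z(q)$ to $c(q^{1/2})$.

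For the ``$mn$ odd'' portion I would substitute $m = 2m'+1$, $n = 2n'+1$ and compute $h(2m'+1, 2n'+1) = 4(m'^2 + m'n' + n'^2) + 8(m'+n') + 5$, so the typical term has $q$-exponent $2(m'^2+m'n'+n'^2) + 4(m'+n') + 8/3$. Meanwhile, expanding the definition gives $c(q^2) = q^{2/3}\sum_{i,j}q^{2h(i,j)}$. After cancelling the common factor $q^{2/3}$ and halving exponents, the desired matching reduces to the identity
$$\sum_{m',n'\in\Z} q^{m'^2 + m'n' + n'^2 + 2m' + 2n' + 1} = \sum_{i,j\in\Z} q^{h(i,j)},$$
which is immediate from the bijection $(i,j) = (-m'-1, -n'-1)$ on $\Z^2$, under which a direct calculation gives $h(i,j) = m'^2 + m'n' + n'^2 + 2m' + 2n' + 1$. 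Hence the odd part equals $c(q^2)$.

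Combining the two pieces yields $c(q^{1/2}) = 3q^{1/6}z(q) + c(q^2)$, and dividing by $3$ and rearranging produces the corollary. The main obstacle will be the arithmetic bookkeeping of the fractional powers of $q$: ensuring that the constant $5/2$ arising from $h(2m'+1, 2n'+1)/2$, the prefactor $q^{1/6}$ in front of $c(q^{1/2})$, and the constant $1/3$ absorbed into the exponents of $c(q^2)$ all combine compatibly (the key check is $5/2 + 1/6 = 8/3 = 2/3 + 2$). Once those align, the remaining step is the purely combinatorial lattice-sum identity, which is routine.
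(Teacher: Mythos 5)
Your proof is correct and follows essentially the same route as the paper's: both arguments rest on the computation $\even(d(q)) = 3z(q^2)$ established in the proof of Theorem~\ref{xDissection} and amount to a $2$-dissection of the cubic theta function $c$. The only real difference is in identifying the complementary (odd) piece with a rescaled copy of $c$: the paper invokes the cited identity $c(q)+c(-q)=2c(q^4)$ from Proposition~\ref{CubicThetaProps}~(10), whereas you reprove the needed instance of it directly via the lattice bijection $(i,j)=(-m'-1,-n'-1)$, and your exponent bookkeeping ($5/2+1/6=8/3=2/3+2$) checks out.
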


\begin{proof}
In the proof of Theorem \ref{xDissection}, we showed that $\odd(c(q^3)) = 3qz(q^6)$. We can also calculate $\odd(c(q^3))$ as follows:
\begin{align}
\odd(c(q^3)) &= \frac{1}{2}(c(q^3)-c(-q^3))\\
&= c(q^3) - \frac{1}{2}(c(q^3)+c(-q^3))\\
&= c(q^3)-c(q^{12})
\end{align}
where line 76 follows from 75 by \ref{CubicThetaProps} (10). So $3qz(q^6) = c(q^3)-c(q^{12})$. Substituting $q = q^{1/6}$, we have $3q^{1/6}z(q) = c(q^{1/2})-c(q^2)$ which will give the desired result.
\end{proof}

\chapter{Character Calculation}
In this chapter, we gather all of the information necessary to setup and complete our character calculation, proving our main result.

\section{Theta Functions for $F_4$ and $G_2$}
We first determine which theta functions will be needed for our characters. All of the necessary $F_4$ and $G_2$ theta functions turn out to be equivalent to theta series for the $A_2$ and $D_4$ lattices which were calculated by Conway and Sloane. We begin by noting how our theta functions relate to their theta series of a lattice.

\subsection{Two Theta Functions}
In \cite{conway}, Conway and Sloane define the theta series of a lattice, $L$, as follows: $$\Theta_L(q) = \sum_{x\in L}q^{(x,x)}.$$ Similarly, for a translation of a lattice, $L+a$, $$\Theta_{L+a}(q) = \sum_{x\in L+a}q^{(x,x)}.$$ Kac and Peterson, on the other hand, define their theta functions by $$\Theta_{\lambda} = e^{k\Lambda_0}\sum_{\gamma\in M^*+k^{-1}\overline{\lambda}}e^{-\frac{1}{2}k|\gamma|^2\delta+k\gamma}$$ where $k = \lambda(c)$ and $\Lambda_0$ is the basic fundamental weight of the affine algebra in question. By taking the horizontal specialization (and noting that we will only be working with level 1 modules), we see that $$(\Theta_{\lambda})_{\hs} = e^{\Lambda_0}\sum_{\gamma\in M^*+\overline{\lambda}}q^{\frac{1}{2}(\gamma,\gamma)}.$$ This gives us that $$e^{-\Lambda_0}(\Theta_{\lambda})_{\hs} = \Theta_{M^*+\overline{\lambda}}(q^{1/2}).$$ In what follows we use the following notation: $Q_{X_n}$ is the root lattice of finite type $X_n$ and $P_{X_n}$ is the weight lattice of finite type $X_n$. The addition of a hat will denote the root or weight lattice of the corresponding affine algebra of type $X_n^{(1)}$. A superscript $k$ on the affine weight lattice will denote the subset of the affine weight lattice of all weights of level $k$. Finally, we will use the subscripts $F_4$ and $G_2$ on $\delta$ to differentiate between $\widehat{Q}_{F_4}$ and $\widehat{Q}_{G_2}$.

\subsection{$F_4$ Weight Lattice and Theta Functions}
We have that $$\widehat{P}^1_{F_4} = \{\Omega_0+\omega+n\delta_{F_4}\mid\omega\in P_{F_4},n\in\C\}.$$ 
But since $P_{F_4} = Q_{F_4}$, we will have 
$$\widehat{P}^1_{F_4} = \{\Omega_0+a\beta_1+b\beta_2+c\beta_3+d\beta_4+n\delta_{F_4}\mid a,b,c,d\in\Z,n\in\C\}.$$ 
To compute our character, recall that we need to find a set of coset representatives of $\widehat{P}^1_{F_4}/(M^*_{F_4}+\C\delta_{F_4})$ where 
$$M^*_{F_4} = \{a\beta_1+b\beta_2+2c\beta_3+2d\beta_4\mid a,b,c,d\in\Z\}$$ 
which will determine which string and theta functions we must compute. Because $\Omega_0\not\in M^*_{F_4}+\C\delta_{F_4}$, $n\delta_{F_4}\in\C\delta_{F_4}$ for all $n\in\C$ and $a\beta_1+b\beta_2+c\beta_3+d\beta_4\in M^*_{F_4}$ only when $c$ and $d$ are even, we can take our coset representatives to be $\Omega_0, \Omega_0+\beta_3$, $\Omega_0+\beta_4$ and $\Omega_0+\beta_3+\beta_4$.
Let $\beta_1'\coloneqq\beta_1$, $\beta_2'\coloneqq\beta_2$, $\beta_3'\coloneqq-\beta_1-2\beta_2-2\beta_3-2\beta_4$ and $\beta_4'\coloneqq \beta_1+2\beta_2+4\beta_3+2\beta_4$. It follows that $\beta_1',\beta_2',\beta_3'$ and $\beta_4'$ form a root system of type $D_4$. Further, $\Z\{\beta_1',\beta_2',\beta_3',\beta_4'\} = M^*_{F_4}$ and so 
$$e^{-\Omega_0}(\Theta_{\Omega_0})_{\hs} = \Theta_{M^*_{F_4}}(q^{1/2}) = \Theta_{D_4}(q^{1/2}).$$ 
Conway and Sloane give $\Theta_{D_4}(q) = \frac{1}{2}(\theta_3(q)^4+\theta_4(q)^4)$ and so $$e^{-\Omega_0}(\Theta_{\Omega_0})_{\hs} = \frac{1}{2}(\theta_3(q^{1/2})^4+\theta_4(q^{1/2})^4)$$ 
(see equation (89) in \cite{conway}). They also give the theta series of the $D_4$ lattice when shifted by various ``glue vectors." These vectors are as follows: 
$$[1] = \frac{1}{2}(\beta_1'+2\beta_2'+\beta_3'+2\beta_4') = \beta_3+\beta_4,$$ 
$$[2] = \frac{1}{2}(\beta_4'-\beta_3') = \beta_1+2\beta_2+3\beta_3+2\beta_4$$
and 
$$[3] = \frac{1}{2}(\beta_1'+2\beta_2'+2\beta_3'+\beta_4') = -\beta_4$$
(\cite{conway} pg. 117). Notice that 
$$M_{F_4}^*+[1] = M_{F_4}^*+\beta_3+\beta_4,$$ 
$$M_{F_4}^*+[2] = M_{F_4}^*+\beta_3$$
and 
$$M_{F_4}^*+[3] = M_{F_4}^*+\beta_4.$$ 
Equation (88) on page 118 of \cite{conway} states: 
$$\Theta_{D_4+[1]}(q) = \Theta_{D_4+[3]}(q) = \frac{1}{2}\theta_2(q)^4$$ 
and equation (89) states 
$$\Theta_{D_4+[2]}(q) = \frac{1}{2}(\theta_3(q)^4-\theta_4(q)^4).$$ 
But by Proposition \ref{jacobiprops} (1), $\theta_2(q)^4 = \theta_3(q)^4-\theta_4(q)^4$. So 
$$e^{-\Omega_0}(\Theta_{\Omega_0+\beta_3})_{\hs} = e^{-\Omega_0}(\Theta_{\Omega_0+\beta_3})_{\hs} = e^{-\Omega_0}(\Theta_{\Omega_0+\beta_3+\beta_4})_{\hs} = \frac{1}{2}\theta_2(q^{1/2})^4.$$ Finally, note that $M_{F_4}^*+\omega_4 = M_{F_4}^*+\beta_3$ and so these last 3 theta functions also equal $e^{-\Omega_0}(\Theta_{\Omega_4})_{\hs}$.

\subsection{$G_2$ Weight Lattice and Theta Functions}
We have 
$$\widehat{P}_{G_2}^1 = \{\Xi_0+\xi+n\delta_{G_2}\mid\xi\in P_{G_2}, n\in\C\}.$$ 
Now we are looking for coset representatives of $\widehat{P}_{G_2}^1/(M^*_{G_2}+\C\delta_{G_2})$ where 
$$M^*_{G_2} = \{a\zeta_1+3b\zeta_2\mid a,b\in\Z\}.$$ 
As in the previous case, $Q_{G_2} = P_{G_2}$ and so 
$$\widehat{P}_{G_2}^1 = \{\Xi_0+a\zeta_1+b\zeta_2+n\delta_{G_2}\mid a,b\in\Z, n\in\C\}.$$ Since $\Xi_0\not\in M^*_{G_2}$, $n\delta_{G_2}\in\C\delta_{G_2}$ for all $n\in\C$ and $a\zeta_1+b\zeta_2\in M^*_{G_2}$ only when $b\in 3\Z$, we can take the following coset representatives: $\Xi_0$, $\Xi_0+\zeta_2$, $\Xi_0+2\zeta_2$.

Let $\zeta_1'\coloneqq -2\zeta_1-3\zeta_2$ and $\zeta_2'\coloneqq\zeta_1$. Then it will follow that $\zeta_1',\zeta_2'$ form a root system of type $A_2$ and that $\Z\{\zeta_1',\zeta_2'\} = M^*_{G_2}$. So, 
$$e^{-\Xi_0}(\Theta_{\Xi_0})_{\hs} = \Theta_{M_{G_2}^*}(q) = \Theta_{A_2}(q^{1/2}).$$ 
However, Conway and Sloane renormalize the $A_2$ lattice so that long roots have squared length 1. So, in what follows, we will not need to substitute $q^{1/2}$ for $q$ in the theta series they obtain. So, to begin, 
$$\Theta_{A_2}(q) = \theta_3(q)\theta_3(q^3)+\theta_2(q)\theta_2(q^3)$$ 
(equation (60) on page 111 of \cite{conway}) which, by Proposition \ref{CubicThetaProps} (1) equals $a(q)$. We will define two glue vectors for $A_2$ as follows: 
$$[1]\coloneqq \frac{1}{3}(\zeta_1'+2\zeta_2') = -\zeta_2$$ 
and 
$$[2]\coloneqq\frac{1}{3}(2\zeta_1'+\zeta_2') =  -\zeta_1-2\zeta_2.$$ 
Notice that $M_{G_2}^*+[1] = M_{G_2}^*+2\zeta_2$ and $M_{G_2}^*+[2] = M_{G_2}^*+\zeta_2$. By equation (57) on page 110 of \cite{conway}, we have $\Theta_{A_2+[1]}(q) = \Theta_{A_2+[2]}(q)$ and so (by (60) on pg. 111 of \cite{conway}) we have 
$$e^{-\Xi_0}(\Theta_{\Xi_0+\zeta_1})_{\hs} = e^{-\Xi_0}(\Theta_{\Xi_0+\zeta_2})_{\hs} = \theta_2(q)\psi_6(q^3)+\theta_3(q)\psi_3(q^3).$$ 
By proposition \ref{jacobiprops} (4) and (5), we have 
\begin{align}
&\theta_2(q)\psi_6(q^3)+\theta_3(q)\psi_3(q^3)\\
&= \theta_2(q)\frac{1}{2}(\theta_2(q^{1/3})-\theta_2(q^3))+\theta_3(q)\frac{1}{2}(\theta_3(q^{1/3})-\theta_3(q^3))\\ 
&= \frac{1}{2}(\theta_2(q)\theta_2(q^{1/3})-\theta_2(q)\theta_2(q^3)+\theta_3(q)\theta_3(q^{1/3})-\theta_3(q)\theta_3(q^3))\\
&= \frac{1}{2}((\theta_2(q)\theta_2(q^{1/3})+\theta_3(q)\theta_3(q^{1/3}))-(\theta_2(q)\theta_2(q^3)+\theta_3(q)\theta_3(q^3)))\\
&= \frac{1}{2}(a(q^{1/3})-a(q))\\
&= c(q)
\end{align}
where line 5 follows from line 4 by proposition \ref{CubicThetaProps} (1) and line 6 follows from line 5 by proposition \ref{CubicThetaProps} (7). We note that $M_{G_2}^*+\xi_2 = M_{G_2}^*+2\zeta_2$ and so these last two theta functions also equal $e^{-\Xi_0}(\Theta_{\Xi_2})_{\hs}$.

\subsection{String Functions}
We will first consider the $F_4$ Weyl group. Let $s_i\coloneqq s_{\beta_i}$ and $s\coloneqq s_4\circ s_3\circ s_2\circ s_1\circ s_3\circ s_2$. Then $s(\beta_3+\beta_4) = \omega_4$. Further, $s_3(\beta_4) = \beta_3+\beta_4$ so $(s\circ s_3)(\beta_4) = s(\beta_3+\beta_4) = \omega_4$. So $c^{\Omega}_{\Omega_0+\beta_4} = c^{\Omega}_{\Omega_0+\beta_3+\beta_4} = c^{\Omega}_{\Omega_4}$ for any $\Omega\in\widehat{P}_{F_4}$. Also, since $M_{F_4}^*+\beta_3 = M_{F_4}^*+\omega_4$, we also have $c^{\Omega}_{\Omega_0+\beta_3} = c^{\Omega}_{\Omega_4}$. Similarly, working with the $G_2$ Weyl group, we notice that $s_{\zeta_1+\zeta_2}(\xi_2) = \zeta_2$. This, together with $M_{G_2}^*+\zeta_2 = M_{G_2}^*+\xi_2$ gives $c^{\Xi}_{\Xi_0+\zeta_2} = c^{\Xi}_{\Xi_0+2\zeta_2} = c^{\Xi}_{\Xi_2}$ for all $\Xi\in\widehat{P}_{G_2}$.

We now list the string functions (or differences of string functions) that will be necessary in our character calculation. These all are stated on page 222 of \cite{KacPeterson} without proof. Bernard and Thierry-Mieg on page 234 of \cite{BTM} verify the $G_2^{(1)}$ string functions stated by Kac and Peterson. We will verify the $F_4^{(1)}$ string functions later in this work.

\begin{align}
c_{\Xi_2}^{\Xi_2} &= \eta(\tau)^{-3}q^{3/40}\prod_{n\not\equiv\pm 1\mod 5}(1-q^{3n})\\
c_{\Xi_2}^{\Xi_0} &= \eta(\tau)^{-3}q^{27/40}\prod_{n\not\equiv\pm 2\mod 5}(1-q^{3n})\\
c&\coloneqq c_{\Xi_0}^{\Xi_0}-c_{\Xi_2}^{\Xi_0} = \eta(\tau)^{-3}q^{1/120}\prod_{n\not\equiv\pm 1\mod 5}(1-q^{n/3})\\
c^\prime &\coloneqq c_{\Xi_2}^{\Xi_2}-c_{\Xi_0}^{\Xi_2} = \eta(\tau)^{-3}q^{3/40}\prod_{n\not\equiv\pm 2\mod 5}(1-q^{n/3})\\
c_{\Omega_4}^{\Omega_4} &= \eta(\tau)^{-6}\eta(2\tau)q^{1/20}\prod_{n\not\equiv\pm 1\mod 5}(1-q^{2n})\\
c_{\Omega_4}^{\Omega_0} &= \eta(\tau)^{-6}\eta(2\tau)q^{9/20}\prod_{n\not\equiv\pm 2\mod 5}(1-q^{2n})\\
d&\coloneqq c_{\Omega_0}^{\Omega_0}-c_{\Omega_4}^{\Omega_0} = \eta(\tau)^{-6}\eta(\tau/2)q^{1/80}\prod_{n\not\equiv\pm 1\mod 5}(1-q^{n/2})\\
d^\prime &\coloneqq c_{\Omega_4}^{\Omega_4}-c_{\Omega_0}^{\Omega_4} = \eta(\tau)^{-6}\eta(\tau/2)q^{9/80}\prod_{n\not\equiv\pm 2\mod 5}(1-q^{n/2}).
\end{align}
Note that 
$$\prod_{n\not\equiv\pm 1\mod 5}(1-q^{n}) = \frac{(q;q)_\infty}{(q;q^5)_\infty(q^4;q^5)_\infty} = \varphi_1G(q)$$ 
by proposition \ref{GH} (1). Similarly, 
$$\prod_{n\not\equiv\pm 2\mod 5}(1-q^{n}) = \varphi_1H(q)$$ 
by proposition \ref{GH} (2). Using the same notation convention for $G$ and $H$ that we have established for $\eta$ and $\varphi$, we rewrite our string functions as follows:

\begin{align}
c_{\Xi_2}^{\Xi_2} &= q^{-1/20}\varphi_1^{-3}\varphi_3G_3\\
c_{\Xi_2}^{\Xi_0} &= q^{11/20}\varphi_1^{-3}\varphi_3H_3\\
c &= q^{-7/60}\varphi_1^{-3}\varphi_{1/3}G_{1/3}\\
c^\prime &= q^{-1/20}\varphi_1^{-3}\varphi_{1/3}H_{1/3}\\
c_{\Omega_4}^{\Omega_4} &= q^{-7/60}\varphi_1^{-6}\varphi_2^2G_2\\
c_{\Omega_4}^{\Omega_0} &= q^{17/60}\varphi_1^{-6}\varphi_2^2H_2\\
d &= q^{-13/60}\varphi_1^{-6}\varphi_{1/2}^2G_{1/2}\\
d^\prime &= q^{-7/60}\varphi_1^{-6}\varphi_{1/2}^2H_{1/2}.
\end{align}
We end this section by noting that:
\begin{align}
c_{\Xi_0}^{\Xi_0} &= c+c_{\Xi_2}^{\Xi_0}\\
c_{\Xi_0}^{\Xi_2} &= -c^\prime+ c_{\Xi_2}^{\Xi_2}\\
c_{\Omega_0}^{\Omega_0} &= d+c_{\Omega_4}^{\Omega_0}\\
c_{\Omega_0}^{\Omega_4} &= -d^\prime+c_{\Omega_4}^{\Omega_4}.
\end{align}

\section{Setting up Characters}
The characters of the level 1 modules of the so-called ``simply-laced" algebras (i.e. those with only one root length) are quite simple in terms of the Kac-Peterson formula (see example 1 on page 217 of \cite{KacPeterson}). In the case of $V^{\Lambda_0}$, we have $\ch(V^{\Lambda_0})_{\hs} = e^{\Lambda_0}\varphi_1^{-8}\Theta_{Q_{E_8}}(q^{1/2})$. Since $\Theta_{Q_{E_8}} = \frac{1}{2}(\theta_2(q)^8+\theta_3(q)^8+\theta_4(q)^8)$ (see page 122 in \cite{conway}), we have:
$$\gr(V^{\Lambda_0})_{\hs} = \varphi_1^{-8}\frac{1}{2}(\theta_2(q^{1/2})^8+\theta_3(q^{1/2})^8+\theta_4(q^{1/2})^8).$$

A quick computation gives that $s_{\Omega_0} = -13/60$, $s_{\Omega_4} = 23/60$, $s_{\Xi_0} = -7/60$, $s_{\Xi_2} = 17/60$. From the previous three sections, we have the following characters for our level 1 $F_4^{(1)}$ and $G_2^{(1)}$ modules:
\begin{align}
\ch(V^{\Omega_0}) &= q^{13/60}(c^{\Omega_0}_{\Omega_0}\Theta_{\Omega_0}+3c^{\Omega_0}_{\Omega_4}\Theta_{\Omega_4})\\
\ch(V^{\Omega_4}) &= q^{-23/60}(c^{\Omega_4}_{\Omega_0}\Theta_{\Omega_0}+3c^{\Omega_4}_{\Omega_4}\Theta_{\Omega_4})\\
\ch(V^{\Xi_0}) &= q^{7/60}(c^{\Xi_0}_{\Xi_0}\Theta_{\Xi_0}+2c^{\Xi_0}_{\Xi_2}\Theta_{\Xi_2})\\
\ch(V^{\Xi_2}) &= q^{-17/60}(c^{\Xi_2}_{\Xi_0}\Theta_{\Xi_0}+2c^{\Xi_2}_{\Xi_2}\Theta_{\Xi_2}).
\end{align}

\section{The Branching Rule Decomposition}
We now state our main result, which is the branching rule decomposition of the $E_8^{(1)}$ basic module $V^{\Lambda_0}$ with respect to the subalgebra of type $F_4^{(1)}\oplus G_2^{(1)}$. This result is stated by Kac and Sanielevici in \cite{KacSan}, Kac and Wakimoto in \cite{KacWak}, and by Bernard and Thierry-Mieg in \cite{BTM}, but we will now prove it rigorously.

\begin{theorem}
As $F_4^{(1)}\oplus G_2^{(1)}$-modules, we have $V^{\Lambda_0} = (V^{\Omega_0}\otimes V^{\Xi_0})\oplus(V^{\Omega_4}\otimes V^{\Xi_2})$.
\end{theorem}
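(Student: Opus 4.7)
The plan is to verify the stated decomposition by comparing graded dimensions under the horizontal specialization. Since both sides are completely reducible $F_4^{(1)}\oplus G_2^{(1)}$-modules of the same level, matching their graded dimensions (together with matching the level-one highest component, which is evident from the $q^0$ coefficient as observed in the numerical data) forces the decomposition. Concretely, identifying $\Omega_0+\Xi_0$ with $\Lambda_0$ under our projection-induced identifications and factoring out a common $e^{-\Lambda_0}$, it suffices to show the formal power series identity
\[
\gr(V^{\Lambda_0})\hs \;=\; \gr(V^{\Omega_0})\hs\,\gr(V^{\Xi_0})\hs \;+\; q\,\gr(V^{\Omega_4})\hs\,\gr(V^{\Xi_2})\hs,
\]
where the $q$-shift in the second summand reflects the difference in the ``vacuum anomalies'' $s_\Lambda$ between $V^{\Lambda_0}$ and the tensor product $V^{\Omega_4}\otimes V^{\Xi_2}$, matching the numerical evidence already exhibited in equation~(9).

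First, I would substitute the Kac-Peterson character expressions (the four displayed formulas at the end of Section~4.2) into the right-hand side and expand. Each product of characters yields four terms of the form (string function of $F_4^{(1)}$) $\cdot$ (string function of $G_2^{(1)}$) $\cdot$ $\THFO$ or $\THFF$ times $\THGO$ or $\THGT$. The theta function factors are handled via the formulas derived in Section~4.1: after applying hs they become products of $D_4$- and $A_2$-lattice theta series, expressible via Jacobi's $\theta_2,\theta_3,\theta_4$ at argument $q^{1/2}$ and via the Borweins' $a(q),c(q)$. The string function factors, written using Kac-Peterson's formulas (with $G,H$ notation), produce products such as $G_2 G_3$, $H_2 H_3$, $G_3 H_{1/3}$, $G_{1/3} H_3$, and similar combinations with half-integer arguments.

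Next, the crucial algebraic simplifications come from invoking the Rogers-Ramanujan type identities of Proposition~\ref{RRR} in two guises, namely $G_2 G_3 + q H_2 H_3 = \chi(-q^3)/\chi(-q)$ and $G_6 H_1 - q G_1 H_6 = \chi(-q)/\chi(-q^3)$, to collapse the mixed $G$-$H$ products appearing in the cross-terms into pure $\eta$-quotients. These $\eta$-quotients will match (up to fractional-$\tau$ rescaling) the four series $w(q), x(q), y(q), z(q)$ of Section~3.3. The dissection identities (Theorems~\ref{wDissection}, \ref{yDissection}, \ref{xDissection} and Corollary~\ref{zDissection}) then reveal that the apparent fractional powers of $q$ introduced by the Kac-Peterson anomaly $s_\Lambda$ and by the half/third rescalings cancel, leaving a clean integral $q$-series expressible in Borweins' $b,c$ and Ramanujan's $\phi,\psi$. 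Corollary~\ref{oops} and Proposition~\ref{a12c12} can then be used to recombine everything into the $E_8$ theta series $\tfrac{1}{2}(\theta_2(q^{1/2})^8+\theta_3(q^{1/2})^8+\theta_4(q^{1/2})^8)$ divided by $\varphi_1^8$, matching $\gr(V^{\Lambda_0})\hs$ exactly.

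The principal obstacle is the bookkeeping around the two types of auxiliary data. The first is the array of non-integral $q$-powers (coming from anomalies $s_{\Omega_0}=-13/60$, $s_{\Omega_4}=23/60$, $s_{\Xi_0}=-7/60$, $s_{\Xi_2}=17/60$, together with the $q^{n/2}$ and $q^{n/3}$ inside the Rogers-Ramanujan products) which must cancel only after a multi-term combination involving exactly the right dissection; getting the right grouping requires identifying in advance which pairs of mixed products will recombine via Proposition~\ref{RRR}. The second is that the Kac-Peterson string functions for $F_4^{(1)}$ are stated without proof in the literature, so a separate verification is needed; the plan is to carry this out using the Goddard-Kent-Olive coset Virasoro construction, computing branching functions for a suitable conformal embedding and matching them against the claimed $\eta$-quotient formulas using four further Rogers-Ramanujan identities from \cite{berndt}. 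Once the string functions are in hand and the two main identities from Proposition~\ref{RRR} are deployed at the right moment, the remaining steps reduce to classical theta-function manipulations already catalogued in Chapter~3.
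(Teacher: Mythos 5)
Your proposal follows essentially the same route as the paper's own proof: reduce to the graded-dimension identity under the horizontal specialization, expand via the Kac--Peterson characters with the $D_4$- and $A_2$-lattice theta series, collapse the string-function products with the two Rogers--Ramanujan identities of Proposition~\ref{RRR}, cancel the fractional powers via the dissection results of Section~3.3 together with Corollary~\ref{oops} and Proposition~\ref{a12c12}, and verify the $F_4^{(1)}$ string functions separately by the coset construction. The plan is correct as stated.
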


\begin{proof}
We will begin by setting up our graded dimension for $(V^{\Omega_0}\otimes V^{\Xi_0})\oplus(V^{\Omega_4}\otimes V^{\Xi_2})$. Note that shifting $F_4^{(1)}$-modules by $e^{-\Lambda_0}$ will have the same effect as shifting by $e^{-\Omega_0}$ and likewise on $G_2^{(1)}$-modules will have the effect of shifting by $e^{-\Xi_0}$. So we have:
\begin{align}
(e^{-\Omega_0}&\ch(V^{\Omega_0})_{\hs}e^{-\Xi_0}\ch(V^{\Xi_0})_{\hs})+q(e^{-\Omega_0}\ch(V^{\Omega_4})_{\hs}e^{-\Xi_0}\ch(V^{\Xi_2})_{\hs})\\
=& q^{13/60}(\doo\THFO+3\dof\THFF)\nonumber\\
&\times q^{7/60}(\coo\THGO+2\cot\THGT)\nonumber\\
&+q[q^{-23/60}(\dfo\THFO+3\dff\THFF)\nonumber\\
&\times q^{-17/60}(\cto\THGO+2\ctt\THGT)]\\
=& q^{1/3}[(\doo\THFO+3\dof\THFF)\nonumber\\
&\times(\coo\THGO+2\cot\THGT)\nonumber\\
&+(\dfo\THFO+3\dff\THFF)\nonumber\\
&\times(\cto\THGO+2\ctt\THGT)]\\
=& q^{1/3}[\coo\doo\THFO\THGO\nonumber\\
&+3\coo\dof\THFF\THGO\nonumber\\
&+2\cot\doo\THFO\THGT\nonumber\\
&+6\cot\dof\THFF\THGT\nonumber\\
&+\cto\dfo\THFO\THGO\nonumber\\
&+3\cto\dff\THFF\THGO\nonumber\\
&+2\ctt\dfo\THFO\THGT\nonumber\\
&+6\ctt\dff\THFF\THGT].
\end{align}

Organizing our terms by products of theta functions, line (34) becomes:
\begin{align*}
q^{1/3}&[\Theta(0,0)(\coo\doo+\cto\dfo)+2\Theta(0,2)(\cot\doo+\ctt\dfo)\\
&+3\Theta(4,0)(\coo\dof+\cto\dff)+6\Theta(4,2)(\cot\dof+\ctt\dff)].
\end{align*}

Where
$$\Theta(0,0)\coloneqq e^{-\Omega_0}(\Theta_{\Omega_0})\hs e^{-\Xi_0}(\Theta_{\Xi_0})\hs\text{, }\Theta(4,0)\coloneqq e^{-\Omega_0}(\Theta_{\Omega_4})\hs e^{-\Xi_0}(\Theta_{\Xi_0})\hs,
$$ $$\Theta(0,2)\coloneqq e^{-\Omega_0}(\Theta_{\Omega_0})\hs (e^{-\Xi_0}\Theta_{\Xi_2})\hs\text{ and }\Theta(4,2)\coloneqq e^{-\Omega_0}(\Theta_{\Omega_4})\hs e^{-\Xi_0}(\Theta_{\Xi_2})\hs.$$
We will expand and simplify each of the four parenthetical string function expressions separately:

\begin{align}
q^{1/3}&(\coo\doo+\cto\dfo)\\
=& q^{1/3}((c+\cot)(d+\dof)+(-c^\prime+\ctt)(-d^\prime+\dff))\\
=& q^{1/3}(cd+c_{\Xi_2}^{\Xi_0}d+c_{\Xi_2}^{\Xi_0}c_{\Omega_4}^{\Omega_0}+cc_{\Omega_4}^{\Omega_0}+c^\prime d^\prime-c^\prime c_{\Omega_4}^{\Omega_4}-c_{\Xi_2}^{\Xi_2}d^\prime+c_{\Xi_2}^{\Xi_2}c_{\Omega_4}^{\Omega_4})\\
=& q^{1/3}(cd+c_{\Xi_2}^{\Xi_0}d+cc_{\Omega_4}^{\Omega_0}+c_{\Xi_2}^{\Xi_0}c_{\Omega_4}^{\Omega_0}+c_{\Xi_2}^{\Xi_2}c_{\Omega_4}^{\Omega_4}-c^\prime c_{\Omega_4}^{\Omega_4}-c_{\Xi_2}^{\Xi_2}d^\prime+c^\prime d^\prime)\\
=& q^{1/3}\varphi_1^{-9}(q^{-\frac{1}{3}}\varphi_\frac{1}{3}\varphi^2_\frac{1}{2}G_\frac{1}{3}G_\frac{1}{2}+q^\frac{1}{3}\varphi_3\varphi^2_\frac{1}{2}H_3G_\frac{1}{2}\nonumber\\
&+q^\frac{1}{6}\varphi_\frac{1}{3}\varphi^2_2G_\frac{1}{3}H_2+q^\frac{5}{6}\varphi_3\varphi^2_2H_3H_2\nonumber\\
&+q^{-\frac{1}{6}}\varphi_3\varphi^2_2G_3G_2-q^{-\frac{1}{6}}\varphi_\frac{1}{3}\varphi^2_2H_\frac{1}{3}G_2\nonumber\\
&-q^{-\frac{1}{6}}\varphi_3\varphi^2_\frac{1}{2}G_3H_\frac{1}{2}+q^{-\frac{1}{6}}\varphi_\frac{1}{3}\varphi^2_\frac{1}{2}H_\frac{1}{3}H_\frac{1}{2})\\
=& q^{1/3}\varphi_1^{-9}(q^{-\frac{1}{3}}\varphi_\frac{1}{3}\varphi^2_\frac{1}{2}G_\frac{1}{3}G_\frac{1}{2}+q^{-\frac{1}{6}}\varphi_\frac{1}{3}\varphi^2_\frac{1}{2}H_\frac{1}{3}H_\frac{1}{2}\nonumber\\
&+q^\frac{1}{3}\varphi_3\varphi^2_\frac{1}{2}H_3G_\frac{1}{2}-q^{-\frac{1}{6}}\varphi_3\varphi^2_\frac{1}{2}G_3H_\frac{1}{2}\nonumber\\
&+q^\frac{1}{6}\varphi_\frac{1}{3}\varphi^2_2G_\frac{1}{3}H_2-q^{-\frac{1}{6}}\varphi_\frac{1}{3}\varphi^2_2H_\frac{1}{3}G_2\nonumber\\
&+q^\frac{5}{6}\varphi_3\varphi^2_2H_3H_2+q^{-\frac{1}{6}}\varphi_3\varphi^2_2G_3G_2)\\
=& q^{1/3}\varphi_1^{-9}(\varphi_\frac{1}{3}\varphi^2_\frac{1}{2}q^{-\frac{2}{6}}(G_\frac{1}{3}G_\frac{1}{2}+q^\frac{1}{6}H_\frac{1}{3}H_\frac{1}{2})\nonumber\\
&-q^{-\frac{1}{6}}\varphi_3\varphi^2_\frac{1}{2}(G_3H_\frac{1}{2}-q^\frac{1}{2}G_\frac{1}{2}H_3)\nonumber\\
&-q^{-\frac{1}{6}}\varphi_\frac{1}{3}\varphi^2_2(G_2H_\frac{1}{3}-q^\frac{1}{3}G_\frac{1}{3}H_2)\nonumber\\
&+q^{-\frac{1}{6}}\varphi_3\varphi^2_2(G_2G_3+qH_2H_3)).
\end{align}
By proposition \ref{RRR}, line (41) becomes:

$$q^{1/3}\varphi_1^{-9}\left(\frac{q^{-\frac{1}{3}}\varphi_\frac{1}{3}\varphi^2_\frac{1}{2}\chi(-q^\frac{1}{2})}{\chi(-q^\frac{1}{6})}-\frac{q^{-\frac{1}{6}}\varphi_3\varphi^2_\frac{1}{2}\chi(-q^\frac{1}{2})}{\chi(-q^\frac{3}{2})}-\frac{q^{-\frac{1}{6}}\varphi_\frac{1}{3}\varphi^2_2\chi(-q^\frac{1}{3})}{\chi(-q)}+\frac{q^{-\frac{1}{6}}\varphi_3\varphi^2_2\chi(-q^3)}{\chi(-q)}\right).$$
Noting that $\chi(-q) = (q;q^2)_\infty = \varphi_1/\varphi_2$, this becomes:
\begin{align}
\varphi^{-8}_1&\left(\frac{\varphi^2_\frac{1}{3}\varphi^3_\frac{1}{2}}{\varphi^2_1\varphi_\frac{1}{6}}-\frac{q^{\frac{1}{6}}\varphi^2_3\varphi^3_\frac{1}{2}}{\varphi^2_1\varphi_\frac{3}{2}}-\frac{q^{\frac{1}{6}}\varphi^2_\frac{1}{3}\varphi^3_2}{\varphi^2_1\varphi_\frac{2}{3}}+\frac{q^{\frac{1}{6}}\varphi^2_3\varphi^3_2}{\varphi^2_1\varphi_6}\right)\\
=& \varphi_1^{-8}(w(q^{1/6})-q^{1/6}x(q^{1/2})-q^{1/6}y(q^{1/3})+q^{1/6}z(q))\\
=& \varphi_1^{-8}(\phi(-q^{1/2})\phi(-q^{3/2})+q^{1/6}x(q^{1/2})-q^{1/6}x(q^{1/2})\nonumber\\
&-q^{1/6}(z(q)-2q^{1/3}\psi(q)\psi(q^3))+q^{1/6}z(q))\\
=& \varphi_1^{-8}(\phi(-q^{1/2})\phi(-q^{3/2})+2q^{1/2}\psi(q)\psi(q^3))\\
=& \varphi_1^{-8}a(q^2)
\end{align}
Where line (44) follows from (43) by theorem \ref{wDissection} and line (46) follows from (45) by corollary \ref{oops}. 
\newline
Next,
\begin{align}
q^{1/3}&(\cot(d+\dof)+\ctt(-d^\prime+\dff))\\
=& q^{1/3}(c_{\Xi_2}^{\Xi_0}d+c_{\Xi_2}^{\Xi_0}c_{\Omega_4}^{\Omega_0}-c_{\Xi_2}^{\Xi_2}d^\prime+c_{\Xi_2}^{\Xi_2}c_{\Omega_4}^{\Omega_4})\\
=& q^{1/3}(c_{\Xi_2}^{\Xi_0}d+c_{\Xi_2}^{\Xi_0}c_{\Omega_4}^{\Omega_0}+c_{\Xi_2}^{\Xi_2}c_{\Omega_4}^{\Omega_4}-c_{\Xi_2}^{\Xi_2}d^\prime)\\
=& q^{1/3}\varphi_1^{-9}(q^\frac{1}{3}\varphi_3\varphi^2_\frac{1}{2}H_3G_\frac{1}{2}+q^\frac{5}{6}\varphi_3\varphi^2_2H_3H_2\nonumber\\
&+q^{-\frac{1}{6}}\varphi_3\varphi^2_2G_3G_2-q^{-\frac{1}{6}}\varphi_3\varphi^2_\frac{1}{2}G_3H_\frac{1}{2})\\
=& q^{1/3}\varphi_1^{-9}(-q^{-\frac{1}{6}}\varphi_3\varphi^2_\frac{1}{2}(G_3H_\frac{1}{2}-q^\frac{1}{2}G_\frac{1}{2}H_3)\nonumber\\
&+q^{-\frac{1}{6}}\varphi_3\varphi^2_2(G_2G_3+qH_2H_3))\\
=& q^{1/3}\varphi_1^{-9}\left(\frac{-q^{-\frac{1}{6}}\varphi_3\varphi^2_\frac{1}{2}\chi(-q^\frac{1}{2})}{\chi(-q^\frac{3}{2})}+\frac{q^{-\frac{1}{6}}\varphi_3\varphi^2_2\chi(-q^3)}{\chi(-q)}\right)\\
=& \varphi_1^{-8}\left(\frac{-q^{\frac{1}{6}}\varphi^2_3\varphi^3_\frac{1}{2}}{\varphi_1^2\varphi_\frac{3}{2}}+\frac{q^{\frac{1}{6}}\varphi^2_3\varphi^3_2}{\varphi^2_1\varphi_6}\right)\\
=& \varphi_1^{-8}(-q^\frac{1}{6}x(q^\frac{1}{2})+q^\frac{1}{6}z(q))\\
=& \varphi_1^{-8}(-q^\frac{1}{6}(z(q)-q^{-\frac{1}{6}}c(q^2))+q^\frac{1}{6}z(q))\\
=& \varphi_1^{-8}c(q^2)
\end{align}
where line (52) follows from (51) by proposition \ref{RRR} and (55) follows from (54) by theorem \ref{xDissection}. 
\newline
We continue with
\begin{align}
q^{1/3}&(\coo\dof+\cto\dff)\\
=& q^{1/3}((c+\cot)\dof+(-c^\prime+\ctt)\dff)\\
=& q^{1/3}(cc_{\Omega_4}^{\Omega_0}+c_{\Xi_2}^{\Xi_0}c_{\Omega_4}^{\Omega_0}-c'c_{\Omega_4}^{\Omega_4}+c_{\Xi_2}^{\Xi_2}c_{\Omega_4}^{\Omega_4})\\
=& q^{1/3}(cc_{\Omega_4}^{\Omega_0}+c_{\Xi_2}^{\Xi_0}c_{\Omega_4}^{\Omega_0}+c_{\Xi_2}^{\Xi_2}c_{\Omega_4}^{\Omega_4}-c'c_{\Omega_4}^{\Omega_4})\\
=& \varphi_1^{-9}(q^\frac{1}{6}\varphi_\frac{1}{3}\varphi^2_2G_\frac{1}{3}H_2q^\frac{5}{6}\varphi_3\varphi^2_2H_3H_2\nonumber\\
&+q^{-\frac{1}{6}}\varphi_3\varphi^2_2G_3G_2-q^{-\frac{1}{6}}\varphi_\frac{1}{3}\varphi^2_2H_\frac{1}{3}G_2)\\
=& q^{1/3}\varphi_1^{-9}(-q^{-\frac{1}{6}}\varphi_\frac{1}{3}\varphi^2_2(G_2H_\frac{1}{3}-q^\frac{1}{3}G_\frac{1}{3}H_2)\nonumber\\
&+q^{-\frac{1}{6}}\varphi_3\varphi^2_2(G_2G_3+qH_2H_3))\\
=& \varphi_1^{-9}\left(\frac{-q^{-\frac{1}{6}}\varphi_\frac{1}{3}\varphi^2_2\chi(-q^\frac{1}{3})}{\chi(-q)}+\frac{q^{-\frac{1}{6}}\varphi_3\varphi^2_2\chi(-q^3)}{\chi(-q)}\right)\\
=& \varphi_1^{-8}\left(\frac{-q^{\frac{1}{6}}\varphi^2_\frac{1}{3}\varphi^3_2}{\varphi_1^2\varphi_\frac{2}{3}}+\frac{q^{\frac{1}{6}}\varphi^2_3\varphi^3_2}{\varphi_1^2\varphi_6}\right)\\
=& \varphi_1^{-8}(-q^{\frac{1}{6}}y(q^{\frac{1}{3}})+q^{\frac{1}{6}}z(q))\\
=& \varphi_1^{-8}(-q^\frac{1}{6}(z(q)-2q^\frac{1}{3}\psi(q)\psi(q^3))+q^\frac{1}{6}z(q))\\
=& \varphi_1^{-8}(2q^\frac{1}{2}\psi(q)\psi(q^3))
\end{align}
where line (63) follows from (62) by proposition \ref{RRR} and (66) follows from (65) by theorem \ref{yDissection}. 
\newline
Finally,
\begin{align}
q^{1/3}&(c_{\Xi_2}^{\Xi_0}c_{\Omega_4}^{\Omega_0}+c_{\Xi_2}^{\Xi_2}c_{\Omega_4}^{\Omega_4})\\
=& q^{1/3}\varphi_1^{-9}(q^\frac{5}{6}\varphi_3\varphi^2_2H_3H_2+q^{-\frac{1}{6}}\varphi_3\varphi^2_2G_3G_2)\\
=& q^{1/3}\varphi_1^{-9}q^{-\frac{1}{6}}\varphi_3\varphi^2_2(G_2G_3+qH_2H_3)\\
=& q^{1/3}\varphi_1^{-9}\left(\frac{q^{-\frac{1}{6}}\varphi_3\varphi^2_2\chi(-q^3)}{\chi(-q)}\right)\\
=& \varphi_1^{-8}\left(\frac{q^{\frac{1}{6}}\varphi^2_3\varphi^3_2}{\varphi_1^2\varphi_6}\right)\\
=& \varphi_1^{-8}q^{\frac{1}{6}}z(q)\\
=& \varphi_1^{-8}\frac{1}{3}(c(q^{\frac{1}{2}})-c(q^2))
\end{align}
where line (71) follows from (70) by \ref{RRR} (1) and (74) follows from (73) by corollary \ref{zDissection}.

Our calculation now becomes:
\begin{align}
\varphi_1^{-8}&\bigg[\Theta(0,0)(a(q^2))+2\Theta(0,2)(c(q^2))\nonumber\\
&+3\Theta(4,0)(2q^{1/2}\psi(q)\psi(q^3))+6\Theta(4,2)\left(\frac{1}{3}(c(q^{1/2})-c(q^2))\right)\bigg].
\end{align}
Since our goal is to show that this equals the horizontal specialization of the graded dimension of $V^{\Lambda_0}$ we notice that this goal will be accomplished once we show that $$\Theta(0,0)(a(q^2))+2\Theta(0,2)(c(q^2))+3\Theta(4,0)(2q^{1/2}\psi(q)\psi(q^3))+2\Theta(4,2)(c(q^{1/2})-c(q^2))$$ equals $(e^{-\Lambda_0}\Theta_{\Lambda_0})_{\hs} = \dfrac{1}{2}(\theta_2(q^{1/2})^8+\theta_3(q^{1/2})^8+\theta_4(q^{1/2})^8).$ Plugging in the theta functions discussed in the previous chapter gives:
\begin{align}
\Theta&(0,0)(a(q^2))+2\Theta(0,2)(c(q^2))+3\Theta(4,0)(2q^{1/2}\psi(q)\psi(q^3))+2\Theta(4,2)(c(q^{1/2})-c(q^2))\\
=& \left(\frac{1}{2}(\theta_3(q^{1/2})^4+\theta_4(q^{1/2})^4)a(q)\right)(a(q^2))+2\left(\frac{1}{2}(\theta_3(q^{1/2})^4+\theta_4(q^{1/2})^4)c(q)\right)(c(q^2))\nonumber\\
&+3\left(\frac{1}{2}\theta_2(q^{1/2})^4a(q)\right)(2q^{1/2}\psi(q)\psi(q^3))+2\left(\frac{1}{2}\theta_2(q^{1/2})^4c(q)\right)(c(q^{1/2})-c(q^2))\\
=& \frac{1}{2}(\theta_3(q^{1/2})^4+\theta_4(q^{1/2})^4)(a(q)a(q^2)+2c(q)c(q^2))\nonumber\\
&+\frac{1}{2}\theta_2(q^{1/2})^4(6q^{1/2}\psi(q)\psi(q^3)a(q)+2c(q)(c(q^{1/2})-c(q^2))).
\end{align}
Noting that $6q^{1/2}\psi(q)\psi(q^3) = a(q^{1/2})-a(q^2)$ by proposition \ref{CubicThetaProps} (12), line (78) becomes
\begin{align}
\frac{1}{2}&(\theta_3(q^{1/2})^4+\theta_4(q^{1/2})^4)(a(q)a(q^2)+2c(q)c(q^2))\nonumber\\
&+\frac{1}{2}\theta_2(q^{1/2})^4((a(q^{1/2})-a(q^2))a(q)+2c(q)(c(q^{1/2})-c(q^2)))\\
=& \frac{1}{2}(\theta_3(q^{1/2})^4+\theta_4(q^{1/2})^4)(a(q)a(q^2)+2c(q)c(q^2))\nonumber\\
&+\frac{1}{2}\theta_2(q^{1/2})^4(a(q^{1/2})a(q)-a(q)a(q^2)+2c(q^{1/2})c(q)-2c(q)c(q^2))\\
=& \frac{1}{2}(\theta_3(q^{1/2})^4+\theta_4(q^{1/2})^4)(a(q)a(q^2)+2c(q)c(q^2))\nonumber\\
&+\frac{1}{2}\theta_2(q^{1/2})^4((a(q^{1/2})a(q)+2c(q^{1/2})c(q))-(a(q)a(q^2)+2c(q)c(q^2))).
\end{align}
By proposition \ref{a12c12} $$a(q)a(q^2)+2c(q)c(q^2) = \frac{1}{2}(\theta_3(q^{1/2})^4+\theta_4(q^{1/2})^4)$$ and, consequently, $$a(q^{1/2})a(q)+2c(q^{1/2})c(q) = \frac{1}{2}(\theta_3(q^{1/4})^4+\theta_4(q^{1/4})^4).$$ But by proposition \ref{jacobiprops} (2) and (3), $$\theta_3(q^{1/4})^4 = (\theta_3(q^{1/4})^2)^2 = \theta_3(q^{1/2})^4+2\theta_3(q^{1/2})^2\theta_2(q^{1/2})^2+\theta_2(q^{1/2})^4$$ and $$\theta_4(q^{1/4})^4 = (\theta_4(q^{1/4})^2)^2 = \theta_3(q^{1/2})^4-2\theta_3(q^{1/2})^2\theta_2(q^{1/2})^2+\theta_2(q^{1/2})^4$$ thus $$\frac{1}{2}(\theta_3(q^{1/4})^4+\theta_4(q^{1/4})^4) = \theta_3(q^{1/2})^4+\theta_2(q^{1/2})^4.$$ So then line (81) becomes:
\begin{align}
\frac{1}{2}&(\theta_3(q^{1/2})^4+\theta_4(q^{1/2})^4)\frac{1}{2}(\theta_3(q^{1/2})^4+\theta_4(q^{1/2})^4)\nonumber\\
&+\frac{1}{2}\theta_2(q^{1/2})^4(\theta_3(q^{1/2})^4+\theta_2(q^{1/2})^4-\frac{1}{2}(\theta_3(q^{1/2})^4+\theta_4(q^{1/2})^4))\\
=& \frac{1}{4}(\theta_3(q^{1/2})^4+\theta_4(q^{1/2})^4)^2+\frac{1}{2}\theta_2(q^{1/2})^4\left(\frac{1}{2}\theta_3(q^{1/2})^4+\theta_2(q^{1/2})^4-\frac{1}{2}\theta_4(q^{1/2})^4\right)\\
=& \frac{1}{4}(\theta_3(q^{1/2})^4+\theta_4(q^{1/2})^4)^2+\frac{1}{2}\theta_2(q^{1/2})^4(\frac{1}{2}\theta_2(q^{1/2})^4+\theta_2(q^{1/2})^4)\\
=& \frac{1}{4}(\theta_3(q^{1/2})^4+\theta_4(q^{1/2})^4)^2+\frac{3}{4}\theta_2(q^{1/2})^8\\
=& \frac{1}{4}(\theta_3(q^{1/2})^8+2\theta_3(q^{1/2})^4\theta_4(q^{1/2})^4+\theta_4(q^{1/2})^8+3\theta_2(q^{1/2})^8)\\
=& \frac{1}{4}(\theta_3(q^{1/2})^8+2\theta_3(q^{1/2})^4\theta_4(q^{1/2})^4+\theta_4(q^{1/2})^8+3(\theta_3(q^{1/2})^4-\theta_4(q^{1/2})^4)^2)\\
=& \frac{1}{4}(\theta_3(q^{1/2})^8+2\theta_3(q^{1/2})^4\theta_4(q^{1/2})^4+\theta_4(q^{1/2})^8+3\theta_3(q^{1/2})^8\nonumber\\
&-6\theta_3(q^{1/2})^4\theta_4(q^{1/2})^4+3\theta_4(q^{1/2})^4)\\
=& \frac{1}{4}[(2\theta_3(q^{1/2})^8+2\theta_4(q^{1/2})^8)+(2\theta_3(q^{1/2})^8-4\theta_3(q^{1/2})^4\theta_4(q^{1/2})^4+2\theta_4(q^{1/2})^8)]\\
=& \frac{1}{4}[(2\theta_3(q^{1/2})^8+2\theta_4(q^{1/2})^8)+2(\theta_3(q^{1/2})^4-\theta_4(q^{1/2})^4)^2]\\
=& \frac{1}{4}[(2\theta_3(q^{1/2})^8+2\theta_4(q^{1/2})^8)+2\theta_2(q^{1/2})^8]\\
=& \frac{1}{2}(\theta_2(q^{1/2})^8)+\theta_3(q^{1/2})^8)+\theta_4(q^{1/2})^8))
\end{align}
where proposition \ref{jacobiprops} (1) is used to obtain line (84) from (83), (87) from (86) and (91) from (90). We have obtained the desired result and thus have proven our main result.
\end{proof}

\chapter{Verification of the level-one $F_4^{(1)}$ string functions}
In section 4.1 we mentioned that the $F_4^{(1)}$ and $G_2^{(1)}$ string functions that were used in our calculation are stated by Kac and Peterson without proof. And although Bernard and Thierry-Mieg confirm the level-one $G_2^{(1)}$ string functions in \cite{BTM}, their methods fail to confirm those for $F_4^{(1)}$. In order to confirm the string functions, we will make use of the work of Kac and Wakimoto (\cite{KacWak}) which allows us to calculate them via Virasoro characters.

\section{Virasoro Theory}
We will give an exposition of the definitions, theorems and constructions that are important to the representation theory of the Virasoro algebra. For more information, see \cite{raina}.
\begin{definition}
The \textbf{Witt algebra}, $\mathcal{D}$, is the infinite dimensional Lie algebra with basis $\{d_m\mid m\in\Z\}$ and brackets given by $[d_m,d_n] = (n-m)d_{m+n}$ for all $m,n\in\Z$. There is a representation of $\mathcal{D}$ on $\C[t,t^{-1}]$ where $d_m$ acts as $t^{m+1}\dfrac{d}{dt}$. This can be extended to an action on the affine algebra $\widehat{\frak g}$ by $d_m\cdot x(n) = nx(m+n)$. Note that under this action, $d_0 = d$.
\end{definition}

\begin{definition}
The \textbf{Virasoro algebra}, $\vir$, is the central extension of $\mathcal{D}$ with basis $\{L_m,c_{\vir}\mid m\in\Z\}$ and brackets given by:
\begin{enumerate}[(V1)]
    \item $[\vir,c_{\vir}] = 0$,
    \item $[L_m,L_n] = (m-n)L_{m+n}+\dfrac{1}{12}(m^3-m)\delta_{m,-n}c_{\vir}$ for all $m,n\in\Z$.
\end{enumerate}
\end{definition}

\begin{theorem}
For each pair $(c,h)\in\C^2$ there is an irreducible representation of $\vir$, denoted $\vir(c,h)$, such that $c_{\vir}$ acts as scalar multiplication by $c$ (called the \textbf{central charge}) and there is a highest weight vector, $v$, such that $L_m\cdot v = 0$ for $m>0$ and $L_0\cdot v = hv$. Further, $\vir(c,h)$ decomposes into a direct sum of finite dimensional eigenspaces for $L_0$ and the graded dimension is the formal power series $$\gr(c,h) = \sum_{n\geq 0}\dim(\vir(c,h)_{h-n})q^n.$$
\end{theorem}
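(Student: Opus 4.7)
The plan is to construct $\vir(c,h)$ as the irreducible quotient of a Verma module, in direct analogy with the highest-weight construction for Kac--Moody algebras. Decompose $\vir = \vir^- \oplus \vir^0 \oplus \vir^+$ triangularly, where $\vir^{\pm} = \bigoplus_{m>0}\C L_{\pm m}$ and $\vir^0 = \C L_0 \oplus \C c_{\vir}$, and write $\lab = \vir^0 \oplus \vir^+$. I would turn a one-dimensional space $\C v_{c,h}$ into a $\lab$-module by declaring $c_{\vir}\cdot v_{c,h} = c\, v_{c,h}$, $L_0 \cdot v_{c,h} = h\, v_{c,h}$, and $L_m \cdot v_{c,h} = 0$ for all $m>0$. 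A short check shows this respects (V1) and (V2) on $\lab$: for $m,n \geq 0$, the bracket $[L_m,L_n]$ lies in $\vir^+$ unless $m=n=0$, in which case it vanishes together with the central cocycle, so both sides of (V2) act as zero on $v_{c,h}$.

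Form the induced module $M(c,h) = U(\vir) \otimes_{U(\lab)} \C v_{c,h}$. The PBW theorem, applied to the triangular decomposition above, gives a vector-space basis of $M(c,h)$ consisting of the monomials $L_{-m_1} L_{-m_2} \cdots L_{-m_k} \otimes v_{c,h}$ with $m_1 \geq m_2 \geq \cdots \geq m_k \geq 1$. The relation $[L_0, L_{-m}] = m L_{-m}$, immediate from (V2), forces each such basis monomial to be an $L_0$-eigenvector whose weight differs from $h$ by the integer $m_1+\cdots+m_k$; consequently $M(c,h)$ is the direct sum of finite-dimensional $L_0$-eigenspaces, and the dimension of the eigenspace shifted from $h$ by $n \geq 0$ equals $p(n)$, the number of partitions of $n$. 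Moreover $c_{\vir}$ acts by $c$ on the whole module.

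To extract the irreducible quotient, note that any $\vir$-submodule $N$ of $M(c,h)$ is automatically $L_0$-stable and hence a direct sum of its intersections with the weight spaces. Since $v_{c,h}$ cyclically generates $M(c,h)$, properness of $N$ is equivalent to $N$ missing the one-dimensional weight space $\C v_{c,h}$, and this condition is closed under arbitrary sums. Hence the sum $J(c,h)$ of all proper submodules is itself proper and is the unique maximal proper submodule. Define $\vir(c,h) \coloneqq M(c,h)/J(c,h)$; this quotient is irreducible by construction, the image of $v_{c,h}$ is a highest weight vector with the required properties, and the graded $L_0$-decomposition of $M(c,h)$ descends to one of $\vir(c,h)$ with weight-space dimensions bounded above by $p(n)$. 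The graded-dimension formula is then simply the power series assembled from these finite dimensions. The only non-formal step is the compatibility check for the $\lab$-action, which is a direct computation; everything else is standard highest-weight theory, with no deep obstacle beyond careful bookkeeping.
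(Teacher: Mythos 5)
Your proof is correct: the paper states this theorem without proof (deferring to the Kac--Raina lectures), and your Verma-module construction---triangular decomposition, induction from a one-dimensional $\lab$-module, PBW grading by $L_0$, and quotient by the unique maximal proper graded submodule---is precisely the standard argument from that reference. One small remark: with the paper's bracket $[L_m,L_n]=(m-n)L_{m+n}+\cdots$, the operator $L_{-m}$ raises the $L_0$-eigenvalue by $m$, so the eigenspaces written $\vir(c,h)_{h-n}$ in the statement must be read with the paper's later convention that the grading is by the eigenvalue of $-d_0$ rather than $L_0$; your deliberately sign-neutral phrasing (``weight differs from $h$ by $m_1+\cdots+m_k$'') is compatible with this.
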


\begin{theorem}[Sugawara Construction]
Let $\widehat{\frak g}$ be an affine Lie algebra and $V^\Lambda$ an irreducible, highest weight $\widehat{\frak g}$-module. Let $\{u_i\mid 1\leq i\leq\dim(\lag)\}$ be a basis of the underlying finite dimensional, simple Lie algebra $\frak g$ and let $\{u^i\mid 1\leq i\leq\dim(\lag)\}$ be its dual basis with respect to the normalized invariant, symmetric bilinear form given in definition \ref{AffineDef}. The following operators, called \textbf{Sugawara operators}, represent $\vir$ on $V^\Lambda$:
$$L_m = \frac{1}{2(h^\vee+\Lambda(c))}\sum_{k\in\Z}\textbf{:}u_i(-k)u^i(m+k)\textbf{:}$$ for all $m\in\Z$ where $\textbf{: :}$, the \textbf{bosonic normal ordering}, is defined by
$$\textbf{:}u_i(-k)u^i(m+k)\textbf{:} = 
\begin{cases}
u_i(-k)u^i(m+k)\text{ if }-k\leq m+k \\
u^i(m+k)u_i(-k)\text{ if }m+k<-k
\end{cases}
$$
and $u(n) = u\otimes t^n$ as in definition \ref{AffineDef}.
In this representation, $$c = \frac{\dim(\lag)\cdot\Lambda(c)}{h^\vee+\Lambda(c)}$$ and $L_0$ acts as $-d$ on $V^\Lambda$.
\end{theorem}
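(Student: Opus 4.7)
The plan is to verify the three defining properties of a Virasoro representation for the operators $L_m$, in the order: (1) well-definedness on $V^\Lambda$, (2) the commutator $[L_m, x(n)]$ with affine generators, and (3) the Virasoro bracket $[L_m, L_n]$ itself. For (1), I would fix a weight vector $v \in V^\Lambda$ of weight $\mu$ and observe that, since $d$ acts diagonalizably with eigenvalues bounded above, the affine mode $u^i(m+k)$ annihilates $v$ once $m+k$ exceeds a threshold depending on $v$, while $u_i(-k)$ lifts $v$ out of the $\mu$-weight space unless $k$ also lies in a bounded window. The bosonic normal ordering is precisely the device that makes these two tails annihilate any prescribed finite collection of weight vectors; consequently the formal sum defining $L_m$ restricts to a finite sum on each weight space, so $L_m$ is a well-defined operator on $V^\Lambda$.

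For step (2), the key lemma is the identity $[L_m, x(n)] = -n\, x(m+n)$ for every $x \in \lag$ and $n \in \Z$. Its proof begins by computing $[x(n), u_i(-k) u^i(m+k)]$ using the affine bracket from Definition \ref{AffineDef}; the two resulting brackets produce both a structure-constant piece $[x, u_i](n-k)$ and a central piece from $(x, u_i) n \delta_{n,k} c$. Summing over $i$ exploits two facts about the dual basis: the invariance $\sum_i [x, u_i] \otimes u^i + \sum_i u_i \otimes [x, u^i] = 0$, and the defining property of the dual Coxeter number $\sum_i [u_i,[u^i,x]] = 2 h^\vee x$ (the adjoint Casimir identity). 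After summing over $k \in \Z$, the structure-constant terms telescope into $-n\, x(m+n)$ up to a scalar factor of $2(h^\vee + \Lambda(c))$, with $h^\vee$ contributed by the Casimir identity and $\Lambda(c)$ contributed by the central $c$-terms of the affine bracket. Dividing by the prefactor yields the lemma.

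For step (3), I would apply $[L_m, -]$ as a derivation to each factor of the normal-ordered expression defining $L_n$, substituting in the lemma from step (2). Each factor contributes an index shift, and reassembling the two shifted sums back into the normal-ordered form of $L_{m+n}$ requires moving some $u^i(m+n+k)$ past $u_i(-k)$; those reorderings produce finite-rank corrections from the central term of the affine bracket. The $L$-linear piece collects into $(m-n) L_{m+n}$ after careful reindexing. The corrections, which appear only when $m+n = 0$, accumulate to a scalar of the form $\frac{1}{12}(m^3-m)\delta_{m,-n}$ multiplied by $\dim(\lag) \Lambda(c)/(h^\vee + \Lambda(c))$, giving both the Virasoro relation (V2) and the stated central charge.

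The main obstacle will be the bookkeeping in step (3): because the normal ordering splits each sum at a place that depends on the mode index, pushing one quadratic expression past another produces a doubly-infinite collection of central corrections that must be gathered into a single finite polynomial in $m$. The cleanest way to handle this is to use the derivation property to reduce the calculation to vacuum expectation values on the highest weight vector, where all but finitely many terms vanish explicitly, and then identify the resulting cubic polynomial in $m$ by matching it on $m \in \{-1, 0, 1, 2\}$. Once (V1), (V2), and the central charge are confirmed, the statement that $L_0$ acts as $-d$ on $V^\Lambda$ follows from evaluating step (2) at $m = 0$, which gives $[L_0 + d, x(n)] = 0$ for all $x(n)$, so $L_0 + d$ is central and acts as a scalar; a direct computation of this scalar on the highest weight vector (where only the zero-mode Casimir $\sum_i u_i(0) u^i(0)$ survives, against the eigenvalue of $c$) fixes the normalization so that the stated identification holds up to the conventional conformal-weight shift already recorded in the Kac--Peterson constant $s_\Lambda$.
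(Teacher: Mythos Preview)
The paper does not prove this theorem: it is stated without proof as part of the expository background in Section~5.1, with a pointer to \cite{raina} for details. Your outline is essentially the standard textbook argument one finds there (or in Kac's book), so there is nothing to compare against in the paper itself.

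One small caveat on your last paragraph: from $[L_0+d,x(n)]=0$ you correctly conclude that $L_0+d$ acts as a scalar on $V^\Lambda$, but that scalar is $h_\Lambda=\dfrac{(\Lambda,\Lambda+2\widehat{\rho})}{2(h^\vee+\Lambda(c))}$, not zero. You seem aware of this when you hedge with ``up to the conventional conformal-weight shift,'' but the theorem as stated in the paper literally asserts $L_0=-d$ on $V^\Lambda$, which is only true after absorbing $h_\Lambda$ into the definition of $d$ (equivalently, after shifting $\Lambda$ by a multiple of $\delta$). This is a convention issue in the paper's statement rather than a flaw in your argument; your computation would actually produce the correct shifted relation $L_0=-d+h_\Lambda$.
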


For the next theorem, let $\Tilde{\laa}$ denote $\laa\otimes_\C\C[t,t^{-1}]\oplus\C c$.

\begin{theorem}
Let $\lap$ be a simple Lie subalgebra of a finite dimensional simple Lie algebra $\frak g$ and let $V^\Lambda$ be an irreducible $\Tilde{\lag}$-module. Further, let $\vir_{\lap}(V^\Lambda)$ and $\vir_{\lag}(V^\Lambda)$ be two representations of $\vir$ on $V^\Lambda$ given by the Sugawara construction for $\Tilde{\lap}$ and $\Tilde{\lag}$ respectively. Denoting the respective operators with superscripts, i.e. $\{L_m^\lap,c^\lap\mid m\in\Z\}$ and $\{L_m^\lag,c^\lag\mid m\in\Z\}$, the differences $L_m^\lag-L_m^\lap$ give a representation of $\vir$, $\vir_{\lag-\lap}(V^\Lambda)$ with central charge $c^{\lag/\lap} = c^\lag-c^\lap$. This representation also commutes with $\Tilde{\lap}$ and $\vir_\lap$, that is $[L_m^\lag-L_m^\lap,\Tilde{\lap}] = 0 = [L_m^\lag-L_m^\lap,L_n^\lap]$ for all $m,n\in\Z$.
\end{theorem}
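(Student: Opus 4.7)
The plan is to establish the three assertions of the theorem in sequence: first the commutation of $L_m^\lag - L_m^\lap$ with $\Tilde{\lap}$, then the commutation with $\vir_\lap$, and finally the Virasoro relations for the coset with the advertised central charge.

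The crucial input I would invoke up front is the commutation rule supplied by the Sugawara construction itself:
\[ [L_m^\lap, x(n)] = -n\, x(m+n) \qquad \text{for all } x \in \lap,\ m,n \in \Z, \]
together with the analogous formula for $L_m^\lag$ acting on any $x \in \lag$. Both are proved by a direct computation from the affine bracket $[x(r),y(s)] = [x,y](r+s) + r\delta_{r,-s}(x,y)c$ and the invariance of the normalized bilinear form, which enters through the completeness identity $\sum_i [x, u_i] \otimes u^i + \sum_i u_i \otimes [x, u^i] = 0$ for dual bases $\{u_i\}, \{u^i\}$ of $\lap$ (respectively $\lag$). Granting these, the rest of the argument is essentially formal.

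Since $\lap \subseteq \lag$, both commutation rules apply to any $x \in \lap$ and yield the same value $-n\, x(m+n)$, so subtracting gives $[L_m^\lag - L_m^\lap, x(n)] = 0$ for every $x \in \lap$ and every $n \in \Z$; this is the first assertion. For commutation with $\vir_\lap$, I would write $L_n^\lap$ as its Sugawara expression, a normal-ordered infinite sum of products $\textbf{:}u_i(-k)\, u^i(n+k)\textbf{:}$ with $u_i, u^i \in \lap$, and observe that on any fixed vector of $V^\Lambda$ only finitely many summands act nontrivially (since $u^i(j) v = 0$ for $j$ large enough). Applying the Leibniz rule term by term, the previous step shows that $L_m^\lag - L_m^\lap$ commutes with each factor of each product, hence with each normal-ordered term, and therefore with the whole sum $L_n^\lap$.

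For the Virasoro relations, I would expand
\[ [L_m^\lag - L_m^\lap,\ L_n^\lag - L_n^\lap] = [L_m^\lag, L_n^\lag] - [L_m^\lag, L_n^\lap] - [L_m^\lap, L_n^\lag] + [L_m^\lap, L_n^\lap]. \]
The commutation with $\vir_\lap$ just established gives $[L_m^\lag, L_n^\lap] = [L_m^\lap, L_n^\lap]$, and antisymmetry together with $[L_n^\lag - L_n^\lap, L_m^\lap] = 0$ gives $[L_m^\lap, L_n^\lag] = [L_m^\lap, L_n^\lap]$. The right-hand side therefore collapses to $[L_m^\lag, L_n^\lag] - [L_m^\lap, L_n^\lap]$, and since each of these satisfies the Virasoro relations with its own central charge by the preceding Sugawara theorem, the difference equals $(m-n)(L_{m+n}^\lag - L_{m+n}^\lap) + \tfrac{1}{12}(m^3-m)\delta_{m,-n}(c^\lag - c^\lap)$, which is precisely the Virasoro relation with central charge $c^{\lag/\lap} = c^\lag - c^\lap$. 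The main obstacle is establishing the commutation rule $[L_m^\lap, x(n)] = -n\, x(m+n)$ for $x \in \lap$: this is the standard but delicate Sugawara calculation in which one must carefully split the normal-ordered sum, track the central-charge contributions from the $[u_i(-k), x(n)]$-type brackets, and use the invariance of the form on $\lap$ in an essential way. Once that computation (and its $\lag$-analogue) is granted, everything else reduces to bookkeeping with the Leibniz rule and antisymmetry.
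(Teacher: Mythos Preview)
The paper does not actually prove this theorem; it is stated as background and attributed to Goddard, Kent and Olive \cite{GKO}, so there is no in-paper argument to compare against. Your proof is correct and is essentially the standard GKO argument: use the Sugawara commutation rule $[L_m, x(n)] = -n\,x(m+n)$ (valid for both $\lag$ and $\lap$ on elements $x\in\lap$) to get commutation of the coset operators with $\Tilde{\lap}$, bootstrap to commutation with $\vir_\lap$ via the Sugawara expression for $L_n^\lap$, and then expand the bracket of coset operators and collapse cross terms. One small omission worth noting: $\Tilde{\lap}$ also contains the central element $c$, but this commutes with everything trivially, so the gap is purely cosmetic.
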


The ``coset Virasoro" construction in this last theorem is due to Goddard, Kent and Olive (see \cite{GKO}) 
and is useful in many affine branching rule problems, especially those where the central charge, 
$0 < c^\lag-c^\lap < 1$. Having an algebra of operators that commutes with $\Tilde{\lap}$ means that 
$V^\Lambda$ decomposes into a direct sum of tensor products of a module for that coset Virasoro and a 
module for $\Tilde{\lap}$. Because the $E_8^{(1)}$ subalgebra of type $F_4^{(1)}\oplus G_2^{(1)}$ is a 
conformal subalgebra where $0 = c^{\lag/\lap}$, the coset Virasoro modules would be trivial, so 
this construction is not useful for our decomposition. 
When computing graded dimension, it is often more convenient to shift by $q^{h-c/24}$ and so we define 
$\chi(c,h)\coloneqq q^{h-c/24}\gr(c,h)$. 

For $0<c_{\vir}<1$, there is a discrete series of ``minimal models" for which the Virasoro characters have special behavior. For integers $m, n, s$ and $t$ with $s,t\geq 2$, $s,t$ relatively prime, $1\leq m<s$ and $1\leq n<t$, let
\begin{equation}
\label{FFc}    c_{s,t}\coloneqq 1-\frac{6(s-t)^2}{st}
\end{equation}
and
\begin{equation}
\label{FFh}    h_{s,t}^{m,n} = \frac{(mt-ns)^2-(s-t)^2}{4st}.
\end{equation}
Notice that for each $c = c_{s,t}$, there are only finitely many values of $h = h_{s,t}^{m,n}$. In \cite{FeiginFuchs}, Feigin and Fuchs showed that 
$$\chi_{s,t}^{m,n}(q)\coloneqq\chi(c_{s,t},h_{s,t}^{m,n}) = \frac{q^{h_{s,t}^{m,n}-c_{s,t}/24}}{\varphi(q)}\sum_{k\in\Z}q^{stk^2}(q^{k(mt-ns)}-q^{(mt+ns)k+mn}).$$

In his dissertation (\cite{Quincy}), Loney calculated the characters for $(s,t) = (3,4)$ and $(4,5)$ using the above formula. We now state those which will be needed for our purposes and note to the right of each one its equation number in \cite{Quincy}.

\begin{theorem}
\label{QChars}
Let $V(q)\coloneqq\varphi(q)/\varphi(q^2)$ (so that $V(-q) = \varphi(-q)/\varphi(q^2)$). Then we have the following:
\begin{enumerate}[(1)]
    \item $\chi_{3,4}^{1,1}(q) = \dfrac{1}{2}q^{-1/48}(V(-q)+V(q))$ [3.39]
    \item $\chi_{3,4}^{1,3}(q) = \dfrac{1}{2}q^{-1/48}(V(-q)-V(q))$ [3.40]
    \item $\chi_{3,4}^{1,2} = q^{1/24}V(q)^{-1}$ [3.41]
    \item $\chi_{4,5}^{1,1}(q) = \dfrac{1}{2}q^{-7/240}(V(-q)G(-q)+V(q)G(q))$ [3.50]
    \item $\chi_{4,5}^{1,4}(q) = \dfrac{1}{2}q^{-7/240}(V(-q)G(-q)-V(q)G(q)).$ [3.51]
        \item $\chi_{4,5}^{1,2}(q) = q^{17/240}\dfrac{1}{2}(V(-q)G(-q)+V(q)G(q))$ [3.54]
    \item $\chi_{4,5}^{1,3}(q) = q^{17/240}\dfrac{1}{2}(V(-q)H(-q))-V(q)H(q)$ [3.55]
    \item $\chi_{4,5}^{2,1}(q) = q^{49/120}H(q^2)V(q)^{-1}$ [3.56]
    \item $\chi_{4,5}^{2,2}(q) = q^{1/120}G(q^2)V(q)^{-1}$ [3.57].
\end{enumerate}
\end{theorem}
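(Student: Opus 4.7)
The plan is to apply the Feigin--Fuchs formula for the minimal model character $\chi^{m,n}_{s,t}(q)$ stated just above the theorem, with $c_{s,t}$ and $h^{m,n}_{s,t}$ as in (\ref{FFc}) and (\ref{FFh}), directly to each of the nine parameter choices. For each case I would (i) compute the scalar prefactor $q^{h^{m,n}_{s,t}-c_{s,t}/24}$ from those formulas; (ii) recognize the inner theta-like sum $\sum_k q^{stk^2}(q^{k(mt-ns)}-q^{(mt+ns)k+mn})$ as a difference of two infinite products via two applications of the Jacobi Triple Product Identity (JTP); and (iii) reduce the resulting $\eta$-quotient, after dividing by $\varphi(q)$, to the stated right-hand side.

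The key observations that organize the calculation are the product forms $V(q)=\varphi(q)/\varphi(q^2)=(q;q^2)_\infty$ and $V(-q)=(-q;q^2)_\infty$, so linear combinations $\tfrac{1}{2}(V(-q)\pm V(q))$ correspond to selecting only the even (resp.\ only the odd) parity summands of a suitable theta series. For the three $(s,t)=(3,4)$ characters in items (1)--(3), the resulting $(q^a;q^{24})_\infty$ products after JTP collapse, via standard Euler product rearrangements, into combinations of $(q;q^2)_\infty$ and $(-q;q^2)_\infty$, producing the $V(-q)\pm V(q)$ terms in (1) and (2) and the single $V(q)^{-1}$ in (3) after the two summands in the Feigin--Fuchs numerator telescope against $\varphi(q)=V(q)\varphi(q^2)$.

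For the $(s,t)=(4,5)$ cases (4)--(9) the same strategy applies but with base modulus $5$ inside the JTP. The inner sums now produce $(q^a;q^{40})_\infty$ factors which regroup into products over residue classes modulo $5$, that is, exactly the shape of the Rogers--Ramanujan products from Proposition~\ref{GH}. Combined with a parity decomposition in the summation index $k$ (the source of the sign $q\mapsto-q$ inside $G(-q)$, $H(-q)$, and $V(-q)$), one obtains the stated combinations involving $G(\pm q)$, $H(\pm q)$, $G(q^2)$, $H(q^2)$ and $V(\pm q)$. The overall prefactors $q^{-7/240}$, $q^{17/240}$, $q^{49/120}$, $q^{1/120}$ are dictated by $h^{m,n}_{4,5}-c_{4,5}/24$ with $c_{4,5}=7/10$; the ``$V(q)^{-1}$'' forms in (8) and (9) reflect the parity split collapsing onto a single class of $k$, after which only a single Rogers--Ramanujan product ($H(q^2)$ or $G(q^2)$) remains against $\varphi(q)$ in the denominator.

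The main obstacle is bookkeeping rather than new theory: the parity split of the sum must be matched carefully against the $(q^a;q^{40})_\infty$ factorization in order for $G$ and $H$ to emerge cleanly, and tracking the $q$-power shifts that arise from completing the square in the exponent $stk^2+(mt-ns)k$ is error-prone. No analytic input beyond the JTP and Proposition~\ref{GH} is required, and each of the nine identities reduces to a routine symbolic manipulation; indeed, as noted in the statement of the theorem, the same calculations were carried out in this form at the equation numbers indicated after each identity in \cite{Quincy}, and the plan is simply to follow those calculations.
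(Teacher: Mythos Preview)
Your plan is correct and matches the paper's treatment: the paper does not prove Theorem~\ref{QChars} itself but simply quotes these identities from Loney's dissertation \cite{Quincy}, noting that they were obtained there by applying the Feigin--Fuchs formula directly, exactly as you propose. So your approach is essentially the same as the cited source's, and there is nothing further to compare.
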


\section{Branching Functions}
In \cite{KacWak}, Kac and Wakimoto present in-depth information on the so-called ``branching functions." We begin with a preliminary definition.
\begin{definition}
For a finite dimensional simple Lie algebra, $\lag$, let $\lan_+\coloneqq\bigoplus_{\alpha\in\Phi^+}\lag_\alpha$ and $\lan_-\coloneqq\bigoplus_{\alpha\in\Phi^-}\lag_\alpha$. Then $\lag = \lan_-\oplus\lah\oplus\lan_+$ is called the \textbf{triangular decomposition} of $\lag$. Analogously, define $\widehat{\lan}_\pm\coloneqq\lan_\pm\oplus\sum_{k>0}(\lag\otimes t^{\pm k})$.
\end{definition}
In what follows, let $\lag$ be a finite dimensional rank $\ell$ simple Lie algebra with subalgebra $\lap$ and let $\widehat{\lag}$ and $\widehat{\lap}$ be the corresponding affine algebras. Any notation ornamented with a $\mathring{}$ will correspond to $\lap$ and $\widehat{\lap}$ and any notation without will correspond to $\lag$ and $\widehat{\lag}$. Let $\Lambda\in\widehat{P}^+$ and $\lambda\in\mathring{\widehat{P}^+}+\C\mathring{\delta}$. Viewing $V^\Lambda$ as a $\widehat{\lap}$-module, we can express it as a direct sum of $\widehat{\lap}$-modules $V^\lambda$. We wish to count the occurrence of each $V^\lambda$ in this decomposition. Let
$$(V^\Lambda)_\lambda^{\mathring{\widehat{\lan}}_+}\coloneqq\{v\in V^\Lambda\mid hv = \lambda(h)v\text{ for }h\in\mathring{H},\mathring{\widehat{\lan}}_+(v) = 0\}$$ and
$$\mult_\Lambda(\lambda,\widehat{\lap}) = \dim((V^\Lambda)_\lambda^{\mathring{\widehat{\lan}}_+}).$$
We now define the branching functions $$b^\Lambda_{\lambda}(\tau;\hat{\lag},\hat{\lap}) = q^{s_\Lambda-\mathring{s}_\lambda}\sum_{n\in\Z}\mult_\Lambda(\lambda-n\delta,\hat{\lap})q^n.$$
Note that we will often just write $b^\Lambda_\lambda(\tau)$ or $b^\Lambda_\lambda$. From here, it follows that the string functions are special cases of the branching functions for $\lap = \lah$. Since $\lah$ is abelian, we get for $\Lambda,\lambda\in\widehat{P}^+$ that 
\begin{equation}
\label{StrBranch}    b^\Lambda_\lambda = \eta(\tau)^\ell c^\Lambda_\lambda
\end{equation}
(see remark 12.8 and equation 12.8.13 on pages 232-233 of \cite{kac}).

Next, we will consider the special case where $\lak$ is a finite dimensional simple Lie algebra, 
$\lag = \lak\oplus\lak$ and $\lap = \lak$ considered as diagonally embedded in $\lag$. 
Given two finite dimensional irreducible $\lak$-modules, $V(\lambda)$ and $V(\mu)$, with highest weights
$\lambda$ and $\mu$, the Weyl complete reducibility theorem says there is a direct sum decomposition
of the tensor product
$$V(\lambda)\otimes V(\mu) = \sum_{\nu}\mult_{\lambda,\mu}^\nu V(\nu)$$
where the sum is over highest weights $\nu$ which must be less than or equal to $\lambda+\mu$ in the
partial ordering of integral weights of $\lak$. The coefficients are called the outer multiplicities in 
the tensor product, and they just count the number of times a summand, $V(\nu)$, occurs in the 
decomposition. That multiplicity is the dimension of the subspace of vectors of weight $\nu$ annihilated
by the positive root vectors in $\lak$, that is, the positive nilpotent subalgebra of $\lak$. 

For the category of highest weight modules of an affine Kac-Moody Lie algebra, there is a similar 
complete reducibility theorem giving the direct sum decomposition of a tensor product of two irreducible
highest weight modules. These decompositions can both be interpreted as branching rule decompositions
because the tensor product can be considered as a module for the direct sum of two copies of the Lie
algebra, and the branching is with respect to the diagonally embedded copy of the Lie algebra in that
sum of two copies. 

Let $\widehat{\lak}$ be the affine Kac-Moody Lie algebra with underlying finite dimensional simple Lie algebra $\lak$. 
Let $\Lambda'$ and $\Lambda''$ be two dominant integral weights of $\widehat{\lak}$ with levels $m'$ 
and $m''$ respectively. Then the tensor product 
$V^{\Lambda'}\otimes V^{\Lambda''}$ is a $\widehat{\lak}$-module of level $m'+m''$ under the action 
$$x\cdot (v\otimes w) = (x\cdot v)\otimes w + v\otimes(x\cdot w)$$
for any $x\in \widehat{\lak}$, $v\in V^{\Lambda'}$ and $w\in V^{\Lambda''}$. 
From the point of view of branching rules, this tensor product is an irreducible module for 
$\widehat{\lag} = \widehat{\lak} \oplus \widehat{\lak}$ where an ordered pair $(x,y)$ in the direct
sum acts on a basic tensor $v\otimes w$ by 
$(x,y)\cdot (v\otimes w) = (x\cdot v)\otimes w + v\otimes (y\cdot w)$. The diagonally embedded
copy of $\widehat{\lak}$ in $\widehat{\lag}$ acts as shown above on a tensor product. 
We have the direct sum decomposition
$$V^{\Lambda'}\otimes V^{\Lambda''} = \sum_{\Lambda} \mult_{\Lambda',\Lambda''}(\Lambda) V^{\Lambda}$$
where the sum is over weights $\Lambda$ of level $m = m'+m''$ which are less than or equal to 
$\Lambda' + \Lambda''$ in the partial ordering of weights of $\widehat{\lak}$. 
As in the finite dimensional case,
the outer multiplicity of the summand $V^{\Lambda}$ is the dimension of the subspace of tensors of weight
$\Lambda$ which are annihilated by the positive nilpotent subalgebra of $\widehat{\lak}$ acting as above. 
If $\Lambda$ is a weight such that $V^{\Lambda}$ occurs in the tensor product decomposition above, then 
the same is true for all weights $\Lambda-n\delta$ for non-negative integers $n$. 
This means that the summands in the decomposition can be organized into a finite number of ``strings"
since there are only a finite number of dominant integral weights on level $m$ at the top of each string.
We define: $$b_\Lambda^{\Lambda',\Lambda''}(\tau)\coloneqq 
q^{s_{\Lambda'}+s_{\Lambda''}-s_{\Lambda}}\sum_{n\in\Z}\mult_{\Lambda',\Lambda''}(\Lambda-n\delta)q^n.$$

We now look at the specific case where $\lak$ is of type $A_1$. Denote the two level-one fundamental weights of $\widehat{\lak}$ by $\Gamma_0$ and $\Gamma_1$ and define $\Gamma_{m;j}\coloneqq (m-j)\Gamma_0+j\Gamma_1$. Further, let $$\chi_{j+1,k+1}^{(m)}(q)\coloneqq b_{\Gamma_{m+1;k}}^{\Gamma_i,\Gamma_{m;j}}$$ where $i = j-k\text{ (mod 2)}$. Because of the symmetry of the $A_1^{(1)}$ Dynkin diagram, we will get that $\chi_{r,s}^{(m)} = \chi_{m+2-r,m+3-s}^{(m)}$. Thus, all of the $\chi_{r,s}^{(m)}$ are contained in the set $\{\chi_{r,s}^{(m)}\mid 1\leq s\leq r\leq m+1\}$.

In \cite{KacWak2}, Kac and Wakimoto showed that $$\chi_{r,s}^{(m)} = \frac{1}{\eta(\tau)}(f_{(m+2)(m+3),r(m+3)-s(m+2)}-f_{(m+2)(m+3),r(m+3)+s(m+2)})$$ where $$f_{a,b} = \sum_{n\in\Z}q^{a(n+b/(2a))^2}$$ (see formula 4.3). Further, they show that $\chi_{r,s}^{(m)} = \chi(c^{(m)},h_{r,s}^{(m)})$ where 
\begin{equation}
\label{KWc}    c^{(m)} = 1-\frac{6}{(m+2)(m+3)}
\end{equation}
and 
\begin{equation}
\label{KWh}    h_{r,s}^{(m)} = \frac{[r(m+3)-s(m+2)]^2-1}{4(m+2)(m+3)}.
\end{equation}

It should be noted that Kac and Wakimoto actually claim that $$\chi_{r,s}^{(m)} = q^{-c^{(m)}/24}\gr(c^{(m)},h_{r,s}^{(m)}),$$ but because they define $\gr(c,h) = \sum_{n\geq 0}\dim(\vir(c,h)_{h+n})q^{h+n}$ and have $h$ as the eigenvalue of $d_0$ rather than $-d_0$, this is equivalent to what was stated in the first half of this paragraph.

Finally, we state the results of Kac and Wakimoto which are most important for our purposes:

\begin{theorem}[Propostion 4.4.1(d) in \cite{KacWak}]
\label{branch}
\leavevmode
\newline
We have the following branching functions expressed in terms of Virasoro characters:
\begin{enumerate}[(1)]
    \item $b_{\Omega_0}^{\Omega_0} = \chi_{1,1}^{(1)}\chi_{1,1}^{(2)}+\chi_{2,1}^{(1)}\chi_{3,1}^{(2)}$,
    \item $b_{\Omega_0}^{\Omega_4} = \chi_{1,1}^{(1)}\chi_{3,2}^{(2)}+\chi_{2,1}^{(1)}\chi_{3,3}^{(2)}$,
    \item $b_{\Omega_4}^{\Omega_0} = \chi_{2,2}^{(1)}\chi_{2,1}^{(2)}$,
    \item $b_{\Omega_4}^{\Omega_4} = \chi_{2,2}^{(1)}\chi_{2,2}^{(2)}$.
\end{enumerate}
\end{theorem}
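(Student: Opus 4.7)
The plan is to reduce each of the four identities to a statement about the GKO coset Virasoro of the pair $(\widehat{F_4}, \widehat{H}_{F_4})$ at level one. By \eqref{StrBranch}, each $b_{\Omega_j}^{\Omega_i}$ equals $\eta(\tau)^4$ times the corresponding level-one $F_4^{(1)}$ string function $c^{\Omega_i}_{\Omega_j}$, so the four identities are equivalent to formulas expressing those string functions as finite sums of products of unitary Virasoro minimal model characters. The arithmetic signal that such a decomposition must exist is
$$c_{\mathrm{coset}} \;=\; \frac{\dim F_4}{h^\vee(F_4)+1}-\rk F_4 \;=\; \frac{52}{10}-4 \;=\; \frac{6}{5} \;=\; \tfrac12+\tfrac{7}{10} \;=\; c^{(1)}+c^{(2)},$$
matching \eqref{KWc} at $m=1$ (Ising) and $m=2$ (tricritical Ising).

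The heart of the argument is to exhibit a conformal embedding $\vir(\tfrac12)\oplus \vir(\tfrac{7}{10}) \hookrightarrow \vir_{F_4/H}$: two mutually commuting Virasoro subalgebras of the coset whose stress tensors sum to the coset stress tensor. Granting this, each level-one module $V^{\Omega_i}$, regarded as a module for $\widehat{H}_{F_4}\oplus \vir(\tfrac12)\oplus \vir(\tfrac{7}{10})$, decomposes as a direct sum of tensor products of a Heisenberg Fock space and two minimal-model irreducibles. Grouping the summands by the Heisenberg label $\mu \in \widehat{P}^1_{F_4}/(M^*_{F_4}+\C\delta)$ and dividing by $\eta(\tau)^4$ then rewrites each string function $c^{\Omega_i}_{\Omega_j}$ as a finite sum of products $\chi^{(1)}_{r,s}\chi^{(2)}_{r',s'}$.

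To pin down the pairs $(r,s,r',s')$ that actually contribute, I would match leading fractional $q$-powers on both sides. The admissible conformal weights, computed from \eqref{KWh}, are $h^{(1)}\in\{0,\tfrac12,\tfrac{1}{16}\}$ for Ising and $h^{(2)}\in\{0,\tfrac{1}{10},\tfrac{3}{5},\tfrac{3}{2},\tfrac{7}{16},\tfrac{3}{80}\}$ for tricritical Ising; combining these with the Kac-Peterson values $s_{\Omega_0}=-13/60$ and $s_{\Omega_4}=23/60$, the $q^{1/6}$ shift contributed by $\eta(\tau)^4$, and the $q^{-1/20}$ shift from the Virasoro characters (which accounts for $c^{(1)}/24+c^{(2)}/24$), isolates exactly the combinations listed in (1)--(4). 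The outer multiplicities (each equal to $1$) are then confirmed by expanding both sides in $q$ using Theorem \ref{QChars} together with the explicit $F_4^{(1)}$ string function formulas of Section 4.1 through sufficiently many orders; modularity then upgrades finite agreement to full equality.

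The main obstacle is producing the conformal embedding of $\vir(\tfrac12)\oplus \vir(\tfrac{7}{10})$ into $\vir_{F_4/H}$. One route, implicit in Kac and Wakimoto's proof, is to locate a distinguished $A_1^{(1)}$ subalgebra of $F_4^{(1)}$ and invoke the classical $(A_1)_1\otimes(A_1)_m/(A_1)_{m+1}$ GKO coset realization of $c^{(m)}$ at $m=1,2$, producing the two commuting Virasoros from the resulting chain of cosets; another is to construct the two stress tensors directly inside the vertex operator algebra at $c=6/5$ as elements of the coset Sugawara tensor and verify the commuting Virasoro relations by a rationality argument. Either route crucially exploits the numerical identity $6/5 = 1/2+7/10$, without which no such clean minimal-model factorization could occur.
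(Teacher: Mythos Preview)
The paper does not prove this theorem; it is quoted verbatim as Proposition 4.4.1(d) of Kac--Wakimoto \cite{KacWak} and used as a black box in Section~5.3 to verify the $F_4^{(1)}$ string functions. So there is no ``paper's own proof'' to compare against---your proposal is attempting something the dissertation simply imports.

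That said, your sketch has the right architecture but two real gaps. First, the conformal embedding $\vir(\tfrac12)\oplus\vir(\tfrac{7}{10})\hookrightarrow\vir_{F_4/H}$ is the entire content of the theorem, and you only gesture at it. The Kac--Wakimoto argument does not build commuting stress tensors abstractly inside a $c=6/5$ vertex algebra; it runs through an explicit chain of affine subalgebras (in their setting, via a level-rank or $A_1^{(1)}$ tower) so that each successive coset is a GKO minimal model by construction. Without actually specifying which $A_1^{(1)}$ sits inside $F_4^{(1)}$, at what levels, and why the two-step coset telescopes to $F_4/H$, you have restated the problem rather than solved it. Second, ``expand through sufficiently many orders; modularity then upgrades finite agreement to full equality'' is not a proof until you name the modular group, the weight, the multiplier system, and the Sturm-type bound---and you would need to know in advance that both sides are vector-valued modular forms for the \emph{same} representation, which is again essentially what Kac--Wakimoto's structural argument provides.

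In short: the central-charge bookkeeping and $q$-power matching are fine heuristics for guessing the answer, but the actual proof lives in the construction you defer to the last paragraph.
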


\section{Verification of String Functions}
We will begin by stating 4 more identities involving the Rogers-Ramanujan series; entries 3.2, 3.3, 3.20 and 3.21 in \cite{berndt}. These were first proved by Watson.
\begin{proposition}[\cite{watson}]
\label{MoreRR}
\leavevmode
\newline
We have the following idientites:
\begin{enumerate}[(1)]
    \item $G(q)G(q^4)+qH(q)H(q^4) = \dfrac{\phi(q)}{\varphi(q^2)}$,
    \item $G(q)G(q^4)-qH(q)H(q^4) = \dfrac{\phi(q^5)}{\varphi(q^2)}$,
    \item $G(q)H(-q)+G(-q)H(q) = \dfrac{2\psi(q^2)}{\varphi(q^2)}$,
    \item $G(q)H(-q)-G(-q)H(q) = \dfrac{2q\psi(q^{10})}{\varphi(q^2)}.$
\end{enumerate}
\end{proposition}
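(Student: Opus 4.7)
The plan is to prove these four classical identities, originally due to Watson \cite{watson} and restated as entries 3.2, 3.3, 3.20, and 3.21 in \cite{berndt}, by reducing each to an identity among infinite products via the quintuple product identity. First, I would symmetrize the system by taking sums and differences of the pairs: adding and subtracting (1) and (2) yields the equivalent single identities
$$2G(q)G(q^4) = \frac{\phi(q)+\phi(q^5)}{\varphi(q^2)}, \qquad 2qH(q)H(q^4) = \frac{\phi(q)-\phi(q^5)}{\varphi(q^2)},$$
and analogously (3) and (4) become
$$G(q)H(-q) = \frac{\psi(q^2)+q\psi(q^{10})}{\varphi(q^2)}, \qquad G(-q)H(q) = \frac{\psi(q^2)-q\psi(q^{10})}{\varphi(q^2)}.$$
Each of these four single-sided identities has a left-hand side that factors as an explicit $q$-Pochhammer product, using Proposition \ref{GH} together with the substitution $q\mapsto -q$ in those product forms for the identities involving $H(-q)$ and $G(-q)$.

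Next, the theta quotients on the right-hand sides must also be placed in product form. The key tool is the quintuple product identity,
$$\sum_{n\in\Z}q^{n(3n+1)/2}(z^{3n}-z^{-3n-1}) = (q;q)_\infty(z;q)_\infty(q/z;q)_\infty(qz^2;q^2)_\infty(q/z^2;q^2)_\infty,$$
applied at specializations in which $z$ is a power of $q^{1/2}$ times a fifth root of unity. These specializations produce exactly the 5-dissections of the theta series $\phi(q)$ and $\psi(q^2)$ which, when combined with the corresponding expansions of $\phi(q^5)$ and $q\psi(q^{10})$, reduce the sums and differences on the right-hand sides to single $\eta$-quotients. Once both sides are in product form, each identity becomes a routine equality of infinite products that can be verified factor-by-factor according to residue classes modulo $20$ (for (1) and (2)) or modulo $10$ (for (3) and (4)).

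The main obstacle is the precise bookkeeping: locating the correct quintuple-product specialization for each case and tracking which Pochhammer factors cancel when the dissected theta sums are recombined. Watson's original paper \cite{watson} carries out this computation, and a detailed modern treatment is given in Berndt et al.\ \cite{berndt}, where the four statements are entries 3.2, 3.3, 3.20, and 3.21 in their list of 40 Rogers--Ramanujan identities—the same list from which Proposition \ref{RRR} was drawn. Since the present work already imports identities of the same flavor from that source, the most efficient course is to cite Watson and Berndt et al.\ for the proof of Proposition \ref{MoreRR} and proceed directly to the Virasoro-based verification of the $F_4^{(1)}$ string functions in which these identities will be used.
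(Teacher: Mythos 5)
The paper itself gives no proof of Proposition \ref{MoreRR}: it simply attributes the four identities to Watson and identifies them as entries 3.2, 3.3, 3.20 and 3.21 in \cite{berndt}, which is exactly the course of action your proposal ultimately recommends. Your additional sketch (symmetrizing the pairs and reducing to product identities via the quintuple product and 5-dissections of $\phi$ and $\psi$) is a reasonable outline of the classical argument, but since the paper treats these as imported results, your proposal is in essential agreement with it.
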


Calculating $c$ and $h$ for the characters in Theorem \ref{branch} using equations \ref{KWc} and \ref{KWh} and then doing the same with the characters in \ref{QChars} using equations \ref{FFc} and \ref{FFh} allows us to rewrite the branching functions in terms of the right hand sides of the Virasoro characters calculated by Loney. We obtain the following:

\begin{table}[h!]
\centering
\begin{tabular}{|c|c|c|c|}
\hline
$\chi_{p,q}^{(r)}$ & $c$    & $h$    & $\chi_{s,t}^{m,n}$ \\
$p,q,r$            &      &      & $s,t,m,n$          \\ \hline
1,1,1            & 1/2  & 0    & 3,4,1,1          \\
2,1,1            &      & 1/2  & 3,4,1,3          \\
2,2,1            &      & 1/16 & 3,4,1,2          \\ \hline
1,1,2            & 7/10 & 0    & 4,5,1,1          \\
3,1,2            &      & 3/2  & 4,5,1,4          \\
3,3,2            &      & 1/10 & 4,5,1,2          \\
3,2,2            &      & 3/5  & 4,5,1,3          \\
2,1,2            &      & 7/16 & 4,5,2,1          \\
2,2,2            &      & 3/80 & 4,5,2,2          \\ \hline
\end{tabular}
\caption{Virasoro Characters}
\label{table}
\end{table}

\begin{theorem}
Using our shorthand notation from previous chapters, we have:
\begin{enumerate}[(1)]
    \item $c_{\Omega_4}^{\Omega_4} = q^{-7/60}\varphi_1^{-6}\varphi_2^2G_2$,
    \item $c_{\Omega_4}^{\Omega_0} = q^{17/60}\varphi_1^{-6}\varphi_2^2H_2$
\end{enumerate}
\end{theorem}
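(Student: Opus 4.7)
The plan is to exploit the Kac--Wakimoto expression for level-one $F_4^{(1)}$ branching functions in Theorem \ref{branch}, in combination with the general identity \eqref{StrBranch} which relates an affine string function to its branching function when the subalgebra is the Cartan. For $F_4$ the rank is $\ell = 4$, so
$$c^{\Omega_4}_{\Omega_4} \;=\; \eta(\tau)^{-4}\,b^{\Omega_4}_{\Omega_4} \;=\; \eta(\tau)^{-4}\,\chi_{2,2}^{(1)}\,\chi_{2,2}^{(2)},$$
$$c^{\Omega_4}_{\Omega_0} \;=\; \eta(\tau)^{-4}\,b^{\Omega_4}_{\Omega_0} \;=\; \eta(\tau)^{-4}\,\chi_{2,2}^{(1)}\,\chi_{2,1}^{(2)}.$$
This collapses the task to a product of three explicit Virasoro minimal model characters, plus a power of $\eta$.

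The next step is to match each Kac--Wakimoto character $\chi^{(r)}_{p,q}$ appearing here with its Feigin--Fuchs form $\chi^{m,n}_{s,t}$ by equating central charges and conformal weights via equations \eqref{KWc}, \eqref{KWh}, \eqref{FFc}, \eqref{FFh}. As recorded in Table \ref{table}, the pairs we need are $\chi_{2,2}^{(1)} = \chi_{3,4}^{1,2}$, $\chi_{2,2}^{(2)} = \chi_{4,5}^{2,2}$ and $\chi_{2,1}^{(2)} = \chi_{4,5}^{2,1}$. Parts (3), (9), and (8) of Theorem \ref{QChars} then supply the closed forms
$$\chi_{2,2}^{(1)} = q^{1/24}V(q)^{-1}, \quad \chi_{2,2}^{(2)} = q^{1/120}G(q^2)V(q)^{-1}, \quad \chi_{2,1}^{(2)} = q^{49/120}H(q^2)V(q)^{-1}.$$
A fortunate feature of these three formulas is that none of them comes from the $V(\pm q)$-symmetrized branch of Loney's computations, so no appeal to Watson's identities (Proposition \ref{MoreRR}) is required here.

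The remainder is pure bookkeeping: substitute $\eta(\tau)^{-4} = q^{-1/6}\varphi_1^{-4}$ and $V(q)^{-2} = \varphi_2^{2}\varphi_1^{-2}$, multiply the factors in each case, and collect fractional powers of $q$ using $-\tfrac{1}{6}+\tfrac{1}{24}+\tfrac{1}{120} = -\tfrac{7}{60}$ and $-\tfrac{1}{6}+\tfrac{1}{24}+\tfrac{49}{120} = \tfrac{17}{60}$. Both identities (1) and (2) drop out immediately in the form stated.

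The main ``obstacle,'' such as it is, lies not in this theorem but in its companions for the differences $d = c^{\Omega_0}_{\Omega_0}-c^{\Omega_4}_{\Omega_0}$ and $d' = c^{\Omega_4}_{\Omega_4}-c^{\Omega_0}_{\Omega_4}$. Those verifications will require identifying $\chi_{1,1}^{(1)}$, $\chi_{2,1}^{(1)}$, $\chi_{1,1}^{(2)}$, $\chi_{3,1}^{(2)}$, and $\chi_{3,2}^{(2)}$ in their $V(\pm q)$-symmetrized forms and combining them by means of the four identities in Proposition \ref{MoreRR}. The two string functions addressed in the present theorem sit at the corner of the Virasoro table where no such symmetrization appears, which is precisely why the proof reduces to the straightforward calculation outlined above.
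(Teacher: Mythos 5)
Your proposal is correct and follows exactly the route the paper takes: apply $c^\Lambda_\lambda = \eta(\tau)^{-4}b^\Lambda_\lambda$ from equation \eqref{StrBranch}, substitute the Kac--Wakimoto products $b^{\Omega_4}_{\Omega_4}=\chi^{(1)}_{2,2}\chi^{(2)}_{2,2}$ and $b^{\Omega_4}_{\Omega_0}=\chi^{(1)}_{2,2}\chi^{(2)}_{2,1}$ from Theorem \ref{branch}, identify these with $\chi^{1,2}_{3,4}$, $\chi^{2,2}_{4,5}$, $\chi^{2,1}_{4,5}$ via Table \ref{table}, and insert Loney's closed forms from Theorem \ref{QChars}(3),(9),(8). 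The bookkeeping with $V(q)^{-2}=\varphi_2^2\varphi_1^{-2}$ and the exponent sums $-\tfrac{7}{60}$ and $\tfrac{17}{60}$ matches the paper's computation, and your observation that no Watson identities are needed here (unlike for $c^{\Omega_0}_{\Omega_0}$ and $c^{\Omega_0}_{\Omega_4}$) is also accurate.
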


\begin{proof}
Combining equation \ref{StrBranch}, theorem \ref{branch}(4), table \ref{table} and theorem \ref{QChars} (3) and (9) we have:
\begin{align}
    c_{\Omega_4}^{\Omega_4} &= q^{-1/6}\varphi_1^{-4}(\chi_{3,4}^{1,2}\chi_{4,5}^{2,2})\\
    &= q^{-1/6}\varphi_1^{-4}(q^{1/24}V(q)^{-1})(q^{1/120}G_2V(q)^{-1})\\
    &= q^{-7/60}\varphi_1^{-4}(\varphi_1^{-2}\varphi_2^2)G_2\\
    &= q^{-7/60}\varphi_1^{-6}\varphi_2^2G_2.
\end{align}
Similalrly, combining equation \ref{StrBranch}, theorem \ref{branch}(3), table \ref{table} and theorem \ref{QChars} (3) and (8) we have:
\begin{align}
    c_{\Omega_4}^{\Omega_4} &= q^{-1/6}\varphi_1^{-4}(\chi_{3,4}^{1,2}\chi_{4,5}^{2,1})\\
    &= q^{-1/6}\varphi_1^{-4}(q^{1/24}V(q)^{-1})(q^{49/120}H_2V(q)^{-1})\\
    &= q^{17/60}\varphi_1^{-4}(\varphi_1^{-2}\varphi_2^2)H_2\\
    &= q^{17/60}\varphi_1^{-6}\varphi_2^2H_2.
\end{align}
\end{proof}

\begin{lemma}
\label{eureka}
$\psi(q^2)\phi(q^5)-q\psi(q^{10})\phi(q) = \varphi(q)\varphi(q^5)$.
\end{lemma}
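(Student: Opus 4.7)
The plan is to derive the identity from the four Watson identities in Proposition \ref{MoreRR}, by cleverly combining them so that the expression $\psi(q^2)\phi(q^5)-q\psi(q^{10})\phi(q)$ appears as a result of arithmetic manipulation rather than direct series expansion. The key observation is that identities (3) and (4) of that proposition already contain $\psi(q^2)$ and $q\psi(q^{10})$ as summands on their right-hand sides, so if we multiply (3) by $\phi(q^5)$ and (4) by $\phi(q)$ and subtract, the right-hand side will match what we want, up to the factor $2/\varphi(q^2)$. That is:
\[
\bigl[G(q)H(-q)+G(-q)H(q)\bigr]\phi(q^5)-\bigl[G(q)H(-q)-G(-q)H(q)\bigr]\phi(q)
= \frac{2(\psi(q^2)\phi(q^5)-q\psi(q^{10})\phi(q))}{\varphi(q^2)}.
\]

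The next step is to rewrite the left-hand side by regrouping as $G(q)H(-q)\bigl[\phi(q^5)-\phi(q)\bigr]+G(-q)H(q)\bigl[\phi(q^5)+\phi(q)\bigr]$, and then eliminate $\phi(q)\pm\phi(q^5)$ using identities (1) and (2) of Proposition \ref{MoreRR}. Adding (1) and (2) gives $\phi(q)+\phi(q^5)=2\varphi(q^2)G(q)G(q^4)$, while subtracting gives $\phi(q)-\phi(q^5)=2q\varphi(q^2)H(q)H(q^4)$. Substituting these back, the common factor $2\varphi(q^2)G(q)H(q)$ can be pulled out, leaving $G(-q)G(q^4)-qH(-q)H(q^4)$ inside the brackets. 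That latter expression is exactly identity (1) with $q$ replaced by $-q$, so it equals $\phi(-q)/\varphi(q^2)$.

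Putting everything together yields the clean intermediate identity
\[
\varphi(q^2)\,G(q)H(q)\,\phi(-q)=\psi(q^2)\phi(q^5)-q\psi(q^{10})\phi(q).
\]
Finally, I would invoke two well-known product formulas. First, from the definitions via Proposition \ref{GH}, $G(q)H(q) = 1/\bigl[(q;q^5)_\infty(q^2;q^5)_\infty(q^3;q^5)_\infty(q^4;q^5)_\infty\bigr]$, and since the product of those four Pochhammer factors together with $(q^5;q^5)_\infty$ is $(q;q)_\infty$, we get $G(q)H(q)=\varphi_5/\varphi_1$. Second, from the Jacobi triple product, $\phi(-q)=(q;q^2)_\infty^2(q^2;q^2)_\infty = \varphi_1^2/\varphi_2$. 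Therefore
\[
\varphi_2\cdot\frac{\varphi_5}{\varphi_1}\cdot\frac{\varphi_1^2}{\varphi_2}=\varphi_1\varphi_5=\varphi(q)\varphi(q^5),
\]
completing the proof.

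There is really no serious obstacle here — the whole argument is a carefully chosen linear combination of the four Watson identities cited just before the lemma, supplemented by two elementary Pochhammer-product simplifications. The only non-obvious piece is the initial choice to multiply (3) by $\phi(q^5)$ and (4) by $\phi(q)$ rather than the other way around; that choice is dictated by the need to form the precise combination $\psi(q^2)\phi(q^5)-q\psi(q^{10})\phi(q)$ on the right of the combined identity, and once made the rest is automatic.
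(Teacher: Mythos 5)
Your proof is correct, but it takes a genuinely different route from the one in the paper. The paper's proof starts from the representation $\eta_4\eta_{20} = \frac{1}{2}\sum_{x\not\equiv y\pmod 2}(-1)^y q^{x^2+5y^2}$ due to Hiramatsu, Ishii and Mimura \cite{japan}, splits the double sum according to the parity of $x$, recognizes the two resulting pieces as $q\psi(q^8)\phi(q^{20})$ and $q^{5}\psi(q^{40})\phi(q^4)$, and then rescales $q^4\mapsto q$. Your argument never touches a series expansion: you take the combination $\phi(q^5)\cdot(3)-\phi(q)\cdot(4)$ of Watson's identities from Proposition \ref{MoreRR}, eliminate $\phi(q)\pm\phi(q^5)$ via $(1)\pm(2)$, recognize the leftover bracket $G(-q)G(q^4)-qH(-q)H(q^4)$ as identity $(1)$ evaluated at $-q$, and finish with the elementary products $G(q)H(q)=\varphi_5/\varphi_1$ and $\phi(-q)=\varphi_1^2/\varphi_2$; each step checks out, including the sign bookkeeping in $\phi(q^5)-\phi(q)=-2q\varphi(q^2)H(q)H(q^4)$. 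What your approach buys is self-containment: the lemma becomes a formal consequence of the four identities quoted immediately before it, so the external citation to \cite{japan} is dispensable, and since Proposition \ref{MoreRR} is established independently of the lemma there is no circularity even though the lemma is later used alongside those same identities in the string-function verification. What the paper's approach buys is logical independence: its proof of the lemma uses no Rogers--Ramanujan machinery at all, so the subsequent verification rests on two separate inputs rather than on repeated use of a single family of identities.
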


\begin{proof}
We begin by noting that $$\eta_4\eta_{20} = \dfrac{1}{2}\sum_{\substack{x,y\in\Z \\ x\not\equiv y\mod 2}}(-1)^yq^{x^2+5y^2}.$$ This was shown by Hiramatsu, Ishii and Mimura in \cite{japan} (see pages 78-79). We will split this sum up based on whether $x$ is odd or even (and hence $y$ must be the opposite). This gives us:
\begin{align}
    \dfrac{1}{2}\sum_{\substack{x,y\in\Z \\ x\not\equiv y\mod 2}}(-1)^yq^{x^2+5y^2} =& \dfrac{1}{2}\sum_{m,n\in\Z}(-q^{4m^2+20n^2+20n+5}+q^{4m^2+4m+20n^2+1})\\
    =& \frac{1}{2}q\left(\sum_{m,n\in\Z}q^{4m^2+4m+20n^2}-q^4\sum_{m,n\in\Z}q^{4m^2+20n^2+20n}\right)\\
    =& q\bigg[\left(\frac{1}{2}\sum_{m\in\Z}q^{4m^2+4m}\right)\left(\sum_{n\in\Z}q^{20n^2}\right)\nonumber\\
    &-q^4\left(\sum_{m\in\Z}q^{4m^2}\right)\left(\frac{1}{2}\sum_{n\in\Z}q^{20n^2+20n}\right)\bigg]\\
    =& q(\psi(q^8)\phi(q^{20})-q^4\psi(q^{40})\phi(q^4))
\end{align}
where the first summand in line (14) comes from replacing $x$ by $2m$ and $y$ by $2n+1$ and the second summand comes from replacing $x$ by $2m+1$ and $y$ by $2n$. So we have shown that $\eta_4\eta_{20} = q(\psi(q^8)\phi(q^{20})-q^4\psi(q^{40})\phi(q^4))$ which is equivalent to $\eta_1\eta_{5} = q^{1/4}\varphi_1\varphi_5 = q^{1/4}(\psi(q^2)\phi(q^{5})-q\psi(q^{10})\phi(q))$, which simplifies to give the desired result.
\end{proof}

\begin{theorem}
With our notation as before, we have:
\begin{enumerate}[(1)]
    \item $c_{\Omega_0}^{\Omega_0} = q^{-13/60}\varphi_1^{-6}\varphi_\frac{1}{2}^2G_\frac{1}{2}+q^{17/60}\varphi_1^{-6}\varphi_2^2H_2$,
    \item $c_{\Omega_0}^{\Omega_4} = q^{-7/60}\varphi_1^{-6}\varphi_2^2G_2-q^{-7/60}\varphi_1^{-6}\varphi_\frac{1}{2}^2H_\frac{1}{2}$.
\end{enumerate}
\end{theorem}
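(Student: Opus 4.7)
The plan is to mirror the blueprint used for $c_{\Omega_4}^{\Omega_4}$ and $c_{\Omega_4}^{\Omega_0}$ in the preceding theorem. First, equation \eqref{StrBranch} with $\ell=\rk(F_4)=4$ gives $c^{\Omega_0}_\lambda = q^{-1/6}\varphi_1^{-4}b^{\Omega_0}_\lambda$, and then Theorem \ref{branch}(1), (2) together with Table \ref{table} express each branching function as a bilinear combination of Virasoro characters:
\begin{align*}
c_{\Omega_0}^{\Omega_0} &= q^{-1/6}\varphi_1^{-4}\bigl(\chi_{3,4}^{1,1}\chi_{4,5}^{1,1}+\chi_{3,4}^{1,3}\chi_{4,5}^{1,4}\bigr),\\
c_{\Omega_0}^{\Omega_4} &= q^{-1/6}\varphi_1^{-4}\bigl(\chi_{3,4}^{1,1}\chi_{4,5}^{1,3}+\chi_{3,4}^{1,3}\chi_{4,5}^{1,2}\bigr).
\end{align*}

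Substituting the explicit Feigin--Fuchs expressions from Theorem \ref{QChars}(1), (2), (4), (5), (6), (7) and applying the elementary cancellations $(A+B)(C+D)+(A-B)(C-D)=2(AC+BD)$ and $(A+B)(C-D)+(A-B)(C+D)=2(AC-BD)$ to the mixed $V$-products collapses each expression to
\begin{align*}
c_{\Omega_0}^{\Omega_0} &= \tfrac{1}{2}q^{-13/60}\varphi_1^{-4}\bigl(V(q)^2G(q)+V(-q)^2G(-q)\bigr),\\
c_{\Omega_0}^{\Omega_4} &= \tfrac{1}{2}q^{-7/60}\varphi_1^{-4}\bigl(V(-q)^2H(-q)-V(q)^2H(q)\bigr).
\end{align*}

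I would then translate the $V$-functions into $\eta$-quotients via $V(q)=\varphi_1/\varphi_2$ and $V(-q)=\varphi_2^2/(\varphi_1\varphi_4)$, the latter following from $\varphi(-q)=\varphi_2^3/(\varphi_1\varphi_4)$ (itself derived from $(-q;q^2)_\infty=\varphi_2^2/(\varphi_1\varphi_4)$). After clearing common $\varphi$-factors, the two claimed identities reduce to the dissection statements
\begin{align*}
\tfrac{1}{2}\bigl(\varphi_1^4\varphi_2^{-2}G_1+\varphi_2^4\varphi_4^{-2}G(-q)\bigr) &= \varphi_{1/2}^2G_{1/2}+q^{1/2}\varphi_2^2H_2,\\
\tfrac{1}{2}\bigl(\varphi_2^4\varphi_4^{-2}H(-q)-\varphi_1^4\varphi_2^{-2}H_1\bigr) &= \varphi_2^2G_2-\varphi_{1/2}^2H_{1/2},
\end{align*}
in which the left sides carry a $q\mapsto -q$ parity split while the right sides carry a $q\mapsto q^{1/2},q^2$ dilation split.

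The final and most delicate step is to establish these two dissection identities. I plan to apply Proposition \ref{MoreRR}(1), (2) and (3), (4) after the substitution $q\to q^{1/2}$: items (1), (2) then produce $G_{1/2}G_2\pm q^{1/2}H_{1/2}H_2 = \phi_{1/2}/\varphi_1,\ \phi_{5/2}/\varphi_1$, while (3), (4) produce the parallel decompositions of $G_{1/2}H(-q^{1/2})\pm G(-q^{1/2})H_{1/2}$ in terms of $\psi_1$ and $\psi_5$. Lemma \ref{eureka} at $q\to q^{1/2}$, namely $\psi_1\phi_{5/2}-q^{1/2}\psi_5\phi_{1/2}=\varphi_{1/2}\varphi_{5/2}$, is then the closing identity which packages the resulting $\psi\phi$ products back into the $\varphi$-quotients appearing on the right. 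The hard part will be this final matching: the half-integer powers of $q$ produced by $\varphi_{1/2}^2G_{1/2}$ and $\varphi_{1/2}^2H_{1/2}$ must cancel exactly against the $q^{1/2}\varphi_2^2H_2$ and $-q^{1/2}\varphi_2^2G_2$ tails, a cancellation inaccessible to pure $\varphi$-manipulation and rooted fundamentally in the Rogers--Ramanujan content of the Watson identities together with Lemma \ref{eureka}.
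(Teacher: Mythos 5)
Your blueprint is the paper's own---equation \ref{StrBranch} together with Theorem \ref{branch}(1),(2), Table \ref{table} and Theorem \ref{QChars}, followed by the recombination of Watson's identities (Proposition \ref{MoreRR}) with Lemma \ref{eureka}---but there is a genuine error at the substitution step that makes your two ``dissection identities'' false. The minimal-model characters $\chi_{3,4}^{1,1},\chi_{3,4}^{1,3}$ and $\chi_{4,5}^{1,j}$ must enter the branching functions with $V$, $G$, $H$ evaluated at $\pm q^{1/2}$, not at $\pm q$; this is what the paper does (it writes $\chi_{3,4}^{1,1}=\tfrac12 q^{-1/48}(V(-q^{1/2})+V(q^{1/2}))$, and so on), and it is forced by the target: $c_{\Omega_0}^{\Omega_0}=d+c_{\Omega_4}^{\Omega_0}$ contains $\varphi_{1/2}^2G_{1/2}$, whose half-integral powers of $q$ can only come from characters carrying $q^{1/2}$. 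After the cross terms cancel one arrives at
\begin{align*}
c_{\Omega_0}^{\Omega_0}&=\tfrac12\, q^{-13/60}\varphi_1^{-6}\bigl(\varphi(-q^{1/2})^2G(-q^{1/2})+\varphi(q^{1/2})^2G(q^{1/2})\bigr),
\end{align*}
and the two identities actually requiring proof are, with $t=q^{1/2}$,
\begin{align*}
\varphi(-t)^2G(-t)-\varphi(t)^2G(t)&=2t\,\varphi(t^4)^2H(t^4),\\
\varphi(-t)^2H(-t)+\varphi(t)^2H(t)&=2\,\varphi(t^4)^2G(t^4),
\end{align*}
a parity split in $t$ matched against a $t\mapsto t^4$ dilation. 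Your versions are not equivalent to these and are false: in your first identity the left side expands as $1-2q-q^2+O(q^3)$ while the right side is $1-2q+2q^2+O(q^3)$, so it already fails at order $q^2$; in your second, the left side has only integral powers of $q$ while the right side equals $2q^{1/2}-q^{3/2}+\cdots$.

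The slip is understandable, since Theorem \ref{QChars} is stated with argument $q$, but that convention is incompatible with a literal substitution into $b^{\Omega_0}_{\Omega_0}=\chi_{3,4}^{1,1}\chi_{4,5}^{1,1}+\chi_{3,4}^{1,3}\chi_{4,5}^{1,4}$ viewed as a series in the $F_4^{(1)}$ variable $q$ (note, by contrast, that the characters in the proof of $c_{\Omega_4}^{\Omega_4}$ and $c_{\Omega_4}^{\Omega_0}$ are correctly taken at argument $q$, which is why $G_2$ and $H_2$ appear there). Once the arguments are corrected, your closing plan is exactly the paper's: multiply the two displayed identities by $H(t)$ and $G(t)$ respectively, call the resulting left sides $A(t)$ and $B(t)$, evaluate $A+B$ and $B-A$ via Proposition \ref{MoreRR}(1)--(4), the relation $\varphi(-t)^2=\varphi(t^2)\phi(t)$ and Lemma \ref{eureka}, and solve for $A$ and $B$. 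The repair is therefore local, but as written the central identities you propose to prove are not true, so the argument does not go through.
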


\begin{proof}
By theorem \ref{branch}(1), table \ref{table} and theorem \ref{QChars} (1), (2), (4) and (5), we have:
\begin{align}
    &b_{\Omega_0}^{\Omega_0} = \chi_{3,4}^{1,1}\chi_{4,5}^{1,1}+\chi_{3,4}^{1,3}\chi_{4,5}^{1,4}\\
    =& q^{-1/48}\dfrac{1}{2}(V(-q^{1/2})+V(q^{1/2}))q^{-7/240}\dfrac{1}{2}(V(-q^{1/2})G(-q^{1/2})+V(q^{1/2})G(q^{1/2}))\nonumber\\
    +&q^{-1/48}\dfrac{1}{2}(V(-q^{1/2})-V(q^{1/2}))q^{-7/240}\dfrac{1}{2}(V(-q^{1/2})G(-q^{1/2})-V(q^{1/2})G(q^{1/2}))\\
    =& \dfrac{1}{2}q^{-1/20}(V(-q^{1/2})^2G(-q^{1/2})+V(q^{1/2})^2G(q^{1/2}))\\
    =& \dfrac{1}{2}q^{-1/20}(\varphi(-q^{1/2})^2\varphi(q)^{-2}G(-q^{1/2})+\varphi(q^{1/2})^2\varphi(q)^{-2}G(q^{1/2}))
\end{align}
and so, by formula \ref{StrBranch}, we have:
\begin{align}
    c_{\Omega_0}^{\Omega_0} &= q^{-1/6}\varphi(q)^{-4}\dfrac{1}{2}q^{-1/20}(\varphi(-q^{1/2})^2\varphi(q)^{-2}G(-q^{1/2})+\varphi(q^{1/2})^2\varphi(q)^{-2}G(q^{1/2}))\\
    &= q^{-13/60}\dfrac{1}{2}\varphi(q)^{-6}(\varphi(-q^{1/2})^2G(-q^{1/2})+\varphi(q^{1/2})^2G(q^{1/2})).
\end{align}
We wish to show that this equals
\begin{align}
    q^{-13/60}&\varphi_1^{-6}\varphi_\frac{1}{2}^2G_\frac{1}{2}+q^{17/60}\varphi_1^{-6}\varphi_2^2H_2\\
    &= q^{-13/60}\varphi_1^{-6}(\varphi_\frac{1}{2}^2G_\frac{1}{2}+q^{1/2}\varphi_2^2H_2).
\end{align}
We will be done if we can show that
\begin{equation}
    \varphi(q^{1/2})^2G(q^{1/2})+q^{1/2}\varphi(q^2)^2H(q^2) = \dfrac{1}{2}(\varphi(-q^{1/2})^2G(-q^{1/2})+\varphi(q^{1/2})^2G(q^{1/2}))
\end{equation}
which is equivalent to
\begin{equation}
\label{FirstID}    \varphi(-q^{1/2})^2G(-q^{1/2})-\varphi(q^{1/2})^2G(q^{1/2}) = 2q^{1/2}\varphi(q^2)^2H(q^2).
\end{equation}
We will next reduce the proof of the second string function in our theorem to a similar identity. By theorem \ref{branch}(2), table \ref{table} and theorem \ref{QChars} (1), (2), (6) and (7), we have:
\begin{align}
    b_{\Omega_4}^{\Omega_0} =& \chi_{3,4}^{1,1}\chi_{4,5}^{1,3}+\chi_{3,4}^{1,3}\chi_{4,5}^{1,2}\\
    =& q^{-1/48}\dfrac{1}{2}(V(-q^{1/2})+V(q^{1/2}))q^{17/240}\dfrac{1}{2}(V(-q^{1/2})H(-q^{1/2})-V(q^{1/2})H(q^{1/2}))\nonumber\\
    &+q^{-1/48}\dfrac{1}{2}(V(-q^{1/2})-V(q^{1/2}))q^{17/240}\dfrac{1}{2}(V(-q^{1/2})H(-q^{1/2})+V(q^{1/2})H(q^{1/2}))\\
    =& \dfrac{1}{2}q^{1/20}(V(-q^{1/2})^2H(-q^{1/2})-V(q^{1/2})^2H(q^{1/2}))\\
    =& \dfrac{1}{2}q^{1/20}(\varphi(-q^{1/2})^2\varphi(q)^{-2}H(-q^{1/2})-\varphi(q^{1/2})^2\varphi(q)^{-2}H(q^{1/2}))
\end{align}
and so, by formula \ref{StrBranch}, we have:
\begin{align}
    c_{\Omega_4}^{\Omega_0} &= q^{-1/6}\varphi(q)^{-4}\dfrac{1}{2}q^{1/20}(\varphi(-q^{1/2})^2\varphi(q)^{-2}H(-q^{1/2})-\varphi(q^{1/2})^2\varphi(q)^{-2}H(q^{1/2}))\\
    &= q^{-7/60}\dfrac{1}{2}\varphi(q)^{-6}(\varphi(-q^{1/2})^2H(-q^{1/2})-\varphi(q^{1/2})^2H(q^{1/2})).
\end{align}
We wish to show that this equals
\begin{align}
    q^{-7/60}&\varphi_1^{-6}(\varphi_2^2G_2-q^{-7/60}\varphi_1^{-6}\varphi_\frac{1}{2}^2H_\frac{1}{2}\\
    &= q^{-7/60}\varphi_1^{-6}(\varphi_2^2G_2-\varphi_\frac{1}{2}^2H_\frac{1}{2})
\end{align}
And so we will be done if we can show that
\begin{equation}
    \varphi_2^2G_2-\varphi_\frac{1}{2}^2H_\frac{1}{2} = \dfrac{1}{2}\varphi(q)^{-6}(\varphi(-q^{1/2})^2H(-q^{1/2})-\varphi(q^{1/2})^2H(q^{1/2}))
\end{equation}
which is equivalent to
\begin{equation}
\label{SecID}    \varphi(-q^{1/2})^2H(-q^{1/2})+\varphi(q^{1/2})^2H(q^{1/2}) = 2\varphi(q^2)^2G(q^2).
\end{equation}
Making the substitution $t = q^{1/2}$, equations \ref{FirstID} and \ref{SecID} become:
\begin{equation}
\label{1ID}    \varphi(-t)^2G(-t)-\varphi(t)^2G(t) = 2t\varphi(t^4)^2H(t^4)
\end{equation} and
\begin{equation}
\label{2ID}    \varphi(-t)^2H(-t)+\varphi(t)^2H(t) = 2\varphi(t^4)^2G(t^4).
\end{equation}

Then, multiplying both sides of \ref{1ID} by $H(t)$ and both sides of \ref{2ID} by $G(t)$, we get:
\begin{equation}
\label{3ID}    \varphi(-t)^2G(-t)H(t)-\varphi(t)^2G(t)H(t) = 2t\varphi(t^4)^2H(t^4)H(t)
\end{equation} and
\begin{equation}
\label{4ID}    \varphi(-t)^2G(t)H(-t)+\varphi(t)^2G(t)H(t) = 2\varphi(t^4)^2G(t^4)G(t).
\end{equation}
Let $A(t)$ equal the left hand side of equation \ref{3ID} and $B(t)$ equal the left hand side of \ref{4ID}. We then have the following:
\begin{align}
    A(t)+B(t) &= \varphi(-t)^2(G(-t)H(t)+G(t)H(-t))\\
    &= \varphi(-t)^2\frac{2\psi(t^2)}{\varphi(t^2)}\\
    &= \varphi(t^2)\phi(t)\frac{2\varphi(t^4)^2/\varphi(t^2)}{\varphi(t^2)}\\
    &= \frac{\phi(t)}{\varphi(t^2)}2\varphi(t^4)^2\\
    &= 2\varphi(t^4)^2(G(t)G(t^4)+tH(t)H(t^4))
\end{align}
where line (43) follows from (42) by proposition \ref{MoreRR} (3), (44) follows from line (43) since $\psi(q) = \varphi_2^2/\varphi_1$ and $$\frac{\varphi(-q)}{\varphi(q^2)} = \frac{\phi(q)}{\varphi(-q)}$$ (this is equation 2.14 in \cite{berndt}; for a proof, see Entry 24(iii) on page 39 of \cite{RamNB}), which implies that $\varphi(-q)^2 = \varphi(q^2)\phi(q)$. Then, line (46) follows from (45) by proposition \ref{MoreRR} (1). Similarly, we will get
\begin{align}
    B(t)-A(t) &= \varphi(-t)^2(G(t)H(-t)-G(-t)H(t))+2\varphi(t)^2G(t)H(t)\\
    &= \varphi(-t^2)\dfrac{2t\psi(t^{10})}{\varphi(t^2)}+2\varphi(t)\varphi(t^5)\\
    &= 2t\phi(t)\psi(t^{10})+2\varphi(t)\varphi(t^5)\\
    &= 2\psi(t^2)\phi(t^5)\\
    &= 2\varphi(t^4)^2\frac{\phi(t^5)}{\varphi(t^2)}\\
    &= 2\varphi(t^4)^2(G(t)G(t^4)-tH(t)H(t^4))
\end{align}
where line (48) follows from line (47) by proposition \ref{MoreRR} (4), (49) follows from (48) again because $\varphi(-q)^2 = \varphi(q^2)\phi(q)$, (50) follows from (49) by lemma \ref{eureka}, (51) follows from (50) because $\psi(q) = \varphi_2^2/\varphi_1$ and (52) follows from (51) by proposition \ref{MoreRR} (2).

Now, using lines (46) and (52), we see that:
\begin{align}
    A(t) &= \frac{1}{2}((A(t)+B(t))-(B(t)-A(t)))\\
    &= \varphi(t^4)^2(2tH(t)H(t^4))\\
    &= 2t\varphi(t^4)^2H(t^4)H(t)
\end{align}
as desired. Similarly,
\begin{align}
    B(t) &= \frac{1}{2}((A(t)+B(t))+(B(t)-A(t)))\\
    &= \varphi(t^4)^2(2tG(t)G(t^4))\\
    &= 2t\varphi(t^4)^2G(t^4)G(t)
\end{align}
which was what we wanted. This completes our proof.
\end{proof}

\addcontentsline{toc}{chapter}{Conclusion}
\chapter*{Conclusion}

In this dissertation, we were able to prove the branching rule decomposition of the level-1, irreducible, highest weight $E_8^{(1)}$-module with respect to the subalgebra $F_4^{(1)}\oplus G_2^{(1)}$. As mentioned previously, this result is stated in papers by Kac and Sanielivici \cite{KacSan}, Kac and Wakimoto \cite{KacWak} and Bernard and Theirry-Mieg \cite{BTM}. In both papers by Kac and coauthors, the decomposition is stated without any explicit proof, but the implication is that the theory of branching functions is used (and hence Virasoro theory is at work at least in the background). In our calculation, we exclusively used string functions without need for any (direct) Virasoro theory. Although we used some Virasoro theory to verify the $F_4^{(1)}$ string functions, these can also be verified using the theory of modular forms. Bernard and Thierry-Mieg verified the $G_2^{(1)}$ string functions via explicit module constructions. In their paper, Bernard and Thierry-Mieg only briefly mention our result at the very end and state that this decomposition implies many theta function identities. Although it is hard to know what types of identities they had in mind, many beautiful identities appeared in this dissertation in support of the desired decomposition.

As we just mentioned, we proved our result exclusively using character theory via the Kac-Peterson theta function formula and this calculation could conceivably be shown using the theory of branching functions. Another possibility is via explicit module constructions using the theory of vertex operators. This future direction would not only serve to give a more concrete proof of the result, but also give representation-theoretic meaning to many of the identities proven here.

\end{doublespacing}

\addcontentsline{toc}{chapter}{Bibliography} 

\printbibliography

\end{document}